\newcommand{\eqref}[1]{(\ref{#1})}
\newtheorem{thmm}{Theorem}[section]
\newtheorem{cor}[thmm]{Corollary}
\newtheorem{lem}[thmm]{Lemma}
\newtheorem{prop}[thmm]{Proposition}
\newcommand{\wh}[1]{{\widehat{#1}}}
\newcommand{\RR}{\mathbb{R}}
\newcommand{\CC}{\mathbb{C}}
\newcommand{\DD}{\mathbb{D}}
\newcommand{\PP}{\mathbb{P}}
\newcommand{\FF}{\mathbb{F}}
\newcommand{\NN}{\mathbb{N}}
\newcommand{\cF}{\mathcal{F}}
\newcommand{\cB}{\mathcal{B}}
\newcommand{\cU}{\mathcal{U}}
\newcommand{\cY}{\mathcal{Y}}
\newcommand{\cR}{\mathcal{R}}
\newcommand{\pd}[2]{\frac{\partial #1}{\partial #2}}
\newcommand{\diag}{\operatorname{diag}}
\newcommand{\tr}{\operatorname{tr}}
\newcommand{\rank}{\operatorname{rank}}
\renewcommand{\Re}{\operatorname{Re}}
\renewcommand{\Im}{\operatorname{Im}}
\begin{document}
\begin{frontmatter}

\title{Exponential moments of affine processes}
\runtitle{Exponential moments of affine processes}

\begin{aug}
\author[A]{\fnms{Martin}~\snm{Keller-Ressel}\corref{}\ead[label=e1]{mkeller@math.tu-berlin.de}}
\and
\author[B]{\fnms{Eberhard}~\snm{Mayerhofer}\thanksref{T2}\ead[label=e2]{eberhard.mayerhofer@gmail.com}}
\thankstext{T2}{Supported by PITN-GA-2009-237984, PITN-GA-2009-237984,
ERC (278295) and SFI (08/SRC/FMC1389).}
\affiliation{TU Berlin and Dublin City University}

\address[A]{Fakult\"{a}t II\\
Institut f\"{u}r Mathematik, MA-705\\
TU Berlin\\
Stra{\ss}e des 17. Juni 136\\
D-10623 Berlin\\
Germany\\
\printead{e1}}
\address[B]{School of Mathematical Sciences\\
Dublin City University\\
Glasnevin, Dublin 9\\
Republic of Ireland\\
\printead{e2}}
\runauthor{M. Keller-Ressel and E. Mayerhofer}
\end{aug}

\received{\smonth{4} \syear{2012}}
\revised{\smonth{1} \syear{2014}}

%
\begin{abstract}We investigate the maximal domain of the moment
generating function of affine processes in the sense of Duffie,
Filipovi\'c and Schachermayer [\textit{Ann. Appl. Probab.} \textbf{13}
(2003) 984--1053], and we show the validity of the
affine transform formula that connects exponential moments with the
solution of a generalized Riccati differential equation. Our result
extends and unifies those preceding it (e.g., Glasserman and Kim
[\textit{Math. Finance} \textbf{20} (2010) 1--33],
Filipovi\'c and Mayerhofer [\textit{Radon Ser. Comput. Appl.
Math.} \textbf{8} (2009) 1--40] and Kallsen and Muhle-Karbe
[\textit{Stochastic Process Appl.} \textbf{120} (2010) 163--181])
in that it allows processes with very general jump behavior,
applies to any convex state space and provides both sufficient and
necessary conditions for finiteness of exponential moments.
\end{abstract}

%
\begin{keyword}[class=AMS]
\kwd[Primary ]{60J25}
\kwd[; secondary ]{91B28}
\end{keyword}

\begin{keyword}
\kwd{Affine process}
\kwd{exponential moment}
\kwd{Riccati equation}
\kwd{financial modeling}
\end{keyword}

\end{frontmatter}

\section{Introduction}\label{sec1}
This article investigates the maximal domain of the moment
generating function of an affine process. An affine process is a
time-homogeneous Markov processes $X$ on a finite-dimensional state
space $D\subset\mathbb R^d$ whose characteristic function has the
following property: There exist a complex-valued function $\phi$
and a $\mathbb C^d$-valued function $\psi$ such that
%
%
\begin{equation}
\label{ap} \Phi(t,u,x):=\mathbb E\bigl[e^{\langle{u},{X_t}\rangle}\mid X_0=x
\bigr]=e^{\phi(t,u)+
\langle{\psi(t,u)},{x}\rangle},
\end{equation}
for all $u\in i\mathbb R^d$, $t\geq0$ and $x\in D$. This so-called
\textit{affine property} implies that the PDE
\[
\frac{\partial}{\partial t} \Phi(t,u,x)=\mathcal A\Phi(t,u,x), \qquad \Phi (0,u,x)=\exp\bigl(
\langle{u},{x}\rangle\bigr),
\]
where $\mathcal A$ denotes the infinitesimal generator of $X$, can
be reduced to a system of nonlinear ODEs, commonly referred to as
\textit{generalized Riccati differential equations}, which are of the
form
\begin{subequations}\label{eq_Ric_intro}
%
%
\begin{eqnarray}
\frac{\partial}{\partial t} \phi(t,u)&=&F\bigl(\psi(t,u)\bigr),\qquad \phi(0,u)=0,
\\
\frac{\partial}{\partial t} \psi(t,u)&=&R\bigl(\psi(t,u)\bigr),\qquad \psi(0,u)=u.
\end{eqnarray}
\end{subequations}
A natural and important question is whether formula \eqref{ap} and
the generalized Riccati system \eqref{eq_Ric_intro} can be extended
to real exponential moments $(u \in\RR^d)$ or complex exponential
moments $(u \in\CC^d)$. One might expect that if $F$ and $R$ can be
suitably extended, for example, by analytic extension, then the
exponential moment $\mathbb{E}^{x}[e^{\langle{u},{X_T}\rangle}]$ is
finite if and only
if a solution to the extended Riccati system exists up to time $T$,
and that in this case also \eqref{ap} remains valid. A statement of
this type is usually referred to as \emph{affine transform formula}.
Showing such a formula in full generality is far from
trivial---difficulties include the fact that analytic extension of $F$
and $R$
may not be possible, that solutions of the extended Riccati
equations might not be unique and that the differentiability of $t
\mapsto\phi(t,u)$ and $t \mapsto\psi(t,u)$ is not obvious from
\eqref{ap}. The latter problem of showing that differentiability of
$\phi$ and $\psi$ can be concluded from the definition of an affine
processes is known as the \emph{regularity problem} for affine
processes; cf. Duffie, Filipovi{\'c} and
Schachermayer (\citeyear{Duffie2003}), \citet{KST2009},
\citet{Cuchiero2011}.

Several articles have been concerned with showing the affine transform
formula under different conditions on the process $X$ or the state
space $D$. In particular we mention the following contributions:

\begin{itemize}
\item\citet{Glasserman2009} show the affine transform formula for real
moments of
affine diffusion processes on $D = \mathbb{R}_{\geq0}^m \times\RR
^n$ under a
mean-reversion condition;
\item\citet{Filipovic2009} show the affine transform formula for real
and complex moments of
affine diffusion processes on $D = \mathbb{R}_{\geq0}^m \times\RR^n$;
\item\citet{Kallsen2008a} show that for affine semi-martingales on $D
= \mathbb{R}_{\geq0}^m \times\RR^n$ existence of a solution to the extended
Riccati system on $[0,T]$ implies the validity of the affine transform
formula for real moments under a mild condition on the jump-measures;
\item\citet{Spreij2010} show an affine transform formula for
affine processes whose jump measures possess exponential moments
of all orders and where the state space is a convex subset of $\RR^d$;
\item in the context of a stock price model with stochastic interest
rates and possibility of default, \citet{Cheridito2011} show an affine
transform formula for affine processes with killing when the jump
measures possess exponential moments of all orders.
\end{itemize}

In this article we generalize and unify most of these results. In
particular we remove the condition that all exponential moments of the
jump measures must exist, which is typically not fulfilled in
applications; see the discussion in Section~\ref{Sec:Examples}.
Moreover we show that the existence of a minimal solution to the
extended Riccati system is necessary and sufficient for the exponential
transform formula to hold, while \citet{Kallsen2008a} covers only
sufficiency. Finally our results apply to very general types of state
spaces: The results on real exponential moments hold for affine
processes on an arbitrary convex state space, and the results on
complex exponential moments apply to affine processes on $D=\mathbb
R_+^m\times\mathbb R^n$ and on $D=S_d^+$ (the positive semidefinite
$d\times d$ matrices). These two state spaces [see Duffie, Filipovi{\'c} and
Schachermayer (\citeyear{Duffie2003})
and \citet{cfmt}] are of
particular interest both from the theoretic viewpoint and from the
applied one.
The outline of this paper is as follows: In Section~\ref{sec:
definitions} we present general definitions, some useful notation
and our main results:

\begin{itemize}
\item Theorem~\ref{thmm:main_results} proves the affine transform
formula in terms of \textit{minimal solutions}
to the so-called \textit{extended Riccati system}, which comes from
considering \eqref{eq_Ric_intro} in the real domain. Here we only
require the state space to be closed, convex and with nonempty
interior. The proof of Theorem~\ref{thmm:main_results} is provided in
Section~\ref{sec: real proofs}.
\item Theorem~\ref{thmm:main_complex} extends the validity of the
affine property \eqref{ap} to complex moments $u=p+i z$, where $z\in
\mathbb R^d$. This extension succeeds
under the premise that the $p$th real moment is finite, or
equivalently, that the extended Riccati equations are solvable until
time $T$. The result holds for the state space $\mathbb{R}_{\geq0}^m
\times
\RR^n$ and---under some mild additional conditions---for the state
space $S_d^+$. For the proof of Theorem~\ref{thmm:main_complex}, see
Section~\ref{sec: complex}.
\end{itemize}
In Section~\ref{sec:applications} several applications of our
results to mathematical finance are outlined. Finally, Sections~\ref
{sec: real proofs} and \ref{sec: complex} contain the proofs of
our main results for real moments and complex moments respectively.

\section{Definitions and main results}\label{sec: definitions}
\subsection{Affine processes}
Let $(\Omega, \cF, \FF)$ be a filtered space, with $\FF= (\cF_t)_{t
\ge0}$ a right-continuous filtration. We endow $\RR^d, (d \ge1)$
with an inner product $\langle{\cdot},{\cdot}\rangle$ and let $D$ be a nonempty
convex subset of $\RR^d$, which will act as the state space of the
stochastic process $X$ we are about to define. The state space $D$
has a measurable structure given by its Borel $\sigma$-algebra
$\cB(D)$, and without loss of generality (see the explanation
after Definition~\ref{Def:affine_process}), we may assume that $D$
contains $0$ and that the linear span of $D$ is the full space
$\RR^d$. Under this assumption it follows in particular that the
interior $D^\circ$ of $D$ is nonempty. Associated to $D$ is the
set
%
%
\begin{equation}
\label{Eq:U_def} \cU= \bigl\{u \in\CC^d\dvtx x \mapsto e^{\langle{x},{u}\rangle}
\mbox{is bounded on } D\bigr\}.
\end{equation}
Finally let $(\PP^x)_{x \in D}$ be a
family of probability measures on the filtered space $(\Omega, \cF,
\FF)$ and assume that $\cF$ is complete with respect to $(\PP^x)_{x
\in
D}$ in the sense of \citet{Blumenthal1968}, Chapter~I.5.

Let $X$ be a c\`adl\`ag\setcounter{footnote}{1}\footnote{For convex state spaces, affine
processes have c\`adl\`ag modifications, see Remark~\ref{remcadlag}
below.} $\FF$-adapted time-homogeneous conservative Markov process with
state space $D$. More precisely, writing
%
%
\begin{equation}
\label{Eq:transition} p_t(x,A) = \PP^x(X_t \in A)\qquad
\bigl(t \ge0, x \in D, A \in\cB(D)\bigr)
\end{equation}
for the transition kernel of $X$, $p_t(x,A)$ satisfies the following:
\begin{longlist}[(a)]
\item[(a)]$x \mapsto p_t(x,A)$ is $\cB(D)$-measurable for all $t \ge0, A
\in\cB(A)$,
\item[(b)]$p_t(x,D) = 1$ for all $t \ge0, x \in D$,
\item[(c)]$p_0(x,\{x\}) = 1$ for all $x \in D$ and
\item[(d)] the Chapman--Kolmogorov equation
\[
p_{t+s}(x,A) = \int_D p_t(y,A)
p_s(x,dy)
\]
holds for every $t,s \ge0$ and $(x,A) \in D \times\cB(D)$.
\end{longlist}

%
\begin{rem}
Since $X$ is c\`adl\`ag, the law of $X$ under $\PP^x$ is a probability
measure on the Skorokhod space of c\`adl\`ag paths $\DD(\mathbb
{R}_{\geq0}, \RR^d)$,
for each $x \in D$. There will be no loss of generality by directly
interpreting $\PP^x$ as a measure on this path space.
\end{rem}

%
\begin{defn}[(Affine process)]\label{Def:affine_process}
The process $X$ is called \emph{affine} with state space $D$, if its
transition kernel $p_t(x,A)$ satisfies the following:
\begin{longlist}[(i)]
\item[(i)] it is stochastically continuous, that is, $\lim_{s \to t}p_s(x,\cdot)
= p_t(x,\cdot)$ weakly for all $t \ge0, x \in D$, and
\item[(ii)] there exist functions $\phi\dvtx  \mathbb{R}_{\geq0}\times\cU\to
\CC$ and
$\psi\dvtx  \mathbb{R}_{\geq0}\times\cU\to\CC^d$ such that
%
%
\begin{equation}
\label{Eq:affine_property} \int_D e^{\langle{u},{z}\rangle} p_t(x,d
\xi) = \exp\bigl(\phi(t,u) + \bigl\langle{x},{\psi(t,u)}\bigr\rangle\bigr)
\end{equation}
for all $t \ge0, x \in D$ and $u \in\cU$.
\end{longlist}
\end{defn}

%
\begin{rem}We explain why it is no loss of generality to assume that
$D$ contains $0$ and linearly spans the whole space $\RR^d$:
For an arbitrary nonempty convex subset $D$ of $\RR^d$, let
$\mathrm{aff}(D)$ be the smallest affine subspace of $\RR^d$ that
contains $D$, and let $(x_0, x_1, \ldots, x_k)$ be an affine basis
of $\mathrm{aff}(D)$ such that $x_0 \in D$. Let $h\dvtx \mathrm{aff}(d)
\to\RR^k\dvtx x \mapsto A^\top(x-x_0)$ be the projection to canonical
affine coordinates, that is, $h(x_0) = 0$ and $h(x_i) = e_i$ for each $i
\in\{1, \ldots, k\}$. Set ${\widetilde{D}} = h(D) \subset\RR^k$ and
${\widetilde{X}} = h(X)$. Then ${\widetilde{D}}$ is convex, contains
$0$ and linearly
spans $\RR^k$. It is easily verified that ${\widetilde{X}}$ is again an
affine process with
%
%
\begin{eqnarray}
{\widetilde{\phi}}(t,u) &=& \phi(t,Au) + \bigl\langle{x_0},{
\psi(t,Au) - u}\bigr\rangle,
\\
{\widetilde{\psi}}(t,u) &=& A^+\psi(t,Au),
\end{eqnarray}
where $A^+$ is the Pseudoinverse of $A$ (or any other $k \times
d$-matrix such that $A^+ A = \mathrm{id}_k$).
\end{rem}

The next result shows that an affine process is a semimartingale
with affine (differential) semimartingale characteristics.

%
\begin{thmm}[{[\citet{Cuchiero2011}]}]\label{Thm:semimartingale}
Let $X$ be an affine process with state space $D \subset\RR^d$. Then
for each $x \in D$, the process $X$ is a $\PP^x$-semimartingale with
semimartingale characteristics
\begin{subequations}\label{Eq:semimartingale_char}
%
%
\begin{eqnarray}
A_t &=& \int_0^t
a(X_{s-}) \,ds,
\\
B_t &=& \int_0^t
b(X_{s-}) \,ds,
\\
\nu(\omega,dt,d\xi) &=& K\bigl(X_{t-}(\omega),d\xi\bigr) \,dt,
\end{eqnarray}
\end{subequations}
where $a(x)$, $b(x)$ and $K(x,d\xi)$ are affine functions of the form
\begin{subequations}\label{Eq:characteristics_affine}
%
%
\begin{eqnarray}
a(x) &=& a + x_1 \alpha^1 + \cdots+ x_d
\alpha^d,
\\
b(x) &=& b + x_1 \beta^1 + \cdots+ x_d
\beta^d,
\\
K(x,d\xi) &=& m(d\xi) + x_1 \mu^1(d\xi) + \cdots+
x_d \mu^d(d\xi),
\end{eqnarray}
\end{subequations}
and for each $x \in D$ it holds that $a(x)$ is a positive semidefinite $d
\times d$ matrix, $b(x)$ is a $\RR^d$-vector and $K(x,d \xi)$ is a
Radon measure on $\RR^d$, satisfying
\[
\int_{\RR^d}\bigl(\Vert\xi\Vert^2 \wedge1
\bigr)K(x,d\xi) < \infty
\]
and $K(x,\{0\}) = 0$.
\end{thmm}

\begin{pf}
Follows from \citet{Cuchiero2011}, Theorems~1.4.8 and 1.5.4.
\end{pf}

%
\begin{rem}\label{remcadlag}
Note that several of the assumptions made at the beginning of the
section could be slightly weakened: Following \citet{CT13}
\emph{any} affine process (satisfying a mild regularity property on
$\phi,\psi$ which is automatically fulfilled for convex state spaces)
has a c\`adl\`ag modification; moreover the
$(\mathbb P^x)_{x \in D}$-completion of the filtration generated by an
affine process is
automatically right continuous. Note that it is unkown to this date, whether
all affine processes are Feller. Hence the proof of the c\`adl\`ag
modification in \citet{CT13} is not
an immediate consequence of the Feller property, but more involved.
\end{rem}

\subsection{Real moments of affine processes}

%
\begin{defn}\label{Def:Rx} Given an affine process $X$ and the
associated functions $(a(x),b(x),K(x,d\xi))$ in \eqref
{Eq:semimartingale_char}, define for each $x \in D$ the function $\cR
_x\dvtx \RR^d \to(-\infty, \infty]$ by
%
%
\begin{eqnarray}
\label{Eq:Rx_def} \cR_x(y)& =& \frac{1}{2}\bigl\langle{y},{a(x) y}
\bigr\rangle + \bigl\langle {b(x)},{y}\bigr\rangle
\nonumber
\\[-8pt]
\\[-8pt]
\nonumber
&&{} + \int_{\RR^d
\setminus\{0\}}{
\bigl(e^{\langle{\xi},{y}\rangle} - 1 - \bigl\langle{h(\xi)},{y}\bigr\rangle\bigr) K(x,d
\xi)},
\end{eqnarray}
where $h(\xi) = \mathbf{1}_{\{|\xi| \le1\}} \xi$.
\end{defn}

For each fixed $x \in D$, the function $\cR_x$ is a convex and lower
semi-continuous function\footnote{Lower semi-continuity follows from
Fatou's lemma applied to the integral with respect to $K(x,d\xi)$.}
that may take the value $+\infty$. As for any convex function, the
effective domain $\cY_x$ is the set of arguments for which $\cR_x$
takes finite values. Taking the intersection over all $x \in D$
leads to the following definition.

%
\begin{defn}\label{Def:D}
Given an affine process $X$ and the associated function $\cR_x$ as in
Definition~\ref{Def:Rx}, define
%
%
\begin{equation}
\label{eq: description cD} \cY= \bigcap_{x \in D} \biggl\{y \in
\RR^d\dvtx \int_{|\xi| \ge1} e^{\langle{y},{\xi}\rangle} K(x,d\xi) <
\infty\biggr\}.
\end{equation}
\end{defn}

As an intersection of convex sets, also $\cY$ is convex. Moreover,
$\cY$ contains $0$ and hence is nonempty, because $\cR_x(0) = 0$
for all $x \in D$.

Since the functions $a(x), b(x)$ and $K(x,d\xi)$ are affine in $x$,
we can decompose $\cR_x$ into $\cR_x(y) = F(y) + \langle
{R(y)},{x}\rangle$.
For arguments $y \in\cY$, the functions $F$ and $R$ are uniquely
specified, since $D$ contains $0$ and $d$ linearly independent
points.

%
\begin{prop} \label{Prop:FR} Let $X$ be an affine process with state
space $D$. Then there exist functions $F\dvtx \cY\to\RR$, $R\dvtx \cY\to
\RR
^d$ such that
\[
\cR_x(y) = F(y) + \bigl\langle{R(y)},{x}\bigr\rangle
\]
for all $x \in D$, $y \in\cY$. Let $(e_1, \ldots, e_d)$ be the
canonical basis vectors in $\RR^d$. Then we can write $F$
and $R_i(y):= \langle{R(y)},{e_i}\rangle$ as
\begin{subequations}\label{Eq:FR_def}
%
%
\begin{eqnarray}
F(y) &=& \frac{1}{2}\langle{u},{a y}\rangle + \langle{b},{y}\rangle
\nonumber
\\[-8pt]
\\[-8pt]
\nonumber
&&{}+ \int
_{\RR^d
\setminus\{0\}}{\bigl(e^{\langle{\xi},{y}\rangle} - 1 - \bigl\langle{h(\xi )},{y}
\bigr\rangle\bigr) m(d\xi)},
\\
R_i(y) &= &\frac{1}{2}\bigl\langle{y},{\alpha^i y}
\bigr\rangle + \bigl\langle {\beta^i},{y}\bigr\rangle
\nonumber
\\[-8pt]
\\[-8pt]
\nonumber
&&{}+ \int
_{\RR^d \setminus\{0\}}{\bigl(e^{\langle{\xi},{y}\rangle} - 1 - \bigl\langle{h(\xi)},{y}
\bigr\rangle\bigr) \mu^i(d\xi)},
\end{eqnarray}
with $h(\xi) = \mathbf{1}_{\{|\xi| \le1\}} \xi$.
\end{subequations}
\end{prop}

\begin{pf}The proof follows immediately from Definition~\ref{Def:Rx}
and Theorem~\ref{Thm:semimartingale}.
\end{pf}

%
\begin{rem}
Setting $x = 0$ in \eqref{Eq:Rx_def} yields that $F(y)$ is a convex
and lower semi-continuous function of L\'evy--Khintchine form. The
same is not necessarily true for $R_1, \ldots, R_d$, since the
matrices $\alpha^i$ may not be positive semidefinite, or the measures
$\mu^i$ may be signed measures.
\end{rem}

We use the functions $F(y)$ and $R(y)$ to set up a system of ODEs
associated to the affine process $X$. These equations play a key role
in our main result.

%
\begin{defn}[(Extended Riccati system)]
Let $X$ be an affine process and $F,R$ and $\cY$ be defined as in
Definition~\ref{Def:D} and Proposition~\ref{Prop:FR}. Let $T \ge0,
y \in\cY$ and let
\[
p\dvtx t \mapsto p(t,y), q\dvtx t \mapsto q(t,y)
\]
be $C^1$-functions mapping $[0,T]$ to $\RR$ (resp., $\cY$) that satisfy
\begin{subequations}\label{Eq:Eric}
%
%
\begin{eqnarray}
\label{Eq:Eric1} \frac{\partial}{\partial t} p(t,y) &=& F\bigl(q(t,y)\bigr), \qquad p(0,y) = 0,
\\
\label{Eq:Eric2} \frac{\partial}{\partial t} q(t,y) &=& R\bigl(q(t,y)\bigr), \qquad q(0,y) = y
\end{eqnarray}
\end{subequations}
for all $t \in[0,T]$. Then we call $(p,q)$ a solution (up to time $T$
and with starting point $y$) of the extended Riccati system associated
to $X$.
\end{defn}

It is important to note that in general the function $R$ is locally
Lipschitz continuous only on the interior of $\cY$, but may fail to
be Lipschitz continuous at the boundary of $\cY$. Hence solutions of
\eqref{Eq:Eric} reaching or starting at the boundary of $\cY$ may
not be unique. For this reason we add the following definition.

%
\begin{defn}[(Minimal solution)]\label{definminsol}
Let $X$ be an affine process, and let $(p,q)$ a solution up of $T$
starting at $y \in\cY$ to the associated extended Riccati system.
We call $(p,q)$ a \emph{minimal} solution, if for any other solution
$({\widetilde{p}},{\widetilde{q}})$ up to ${\widetilde{T}} \le T$
and starting at the same point
$q(0,y) = {\widetilde{q}}(0,y) = y$ it holds that
%
%
\begin{equation}
p(t,y) + \bigl\langle{q(t,y)},{x}\bigr\rangle \le{\widetilde{p}}(t,y) + \bigl
\langle{{\widetilde{q}}(t,y)},{x}\bigr\rangle
\end{equation}
for all $t \in[0,{\widetilde{T}}]$ and $x \in D$.
\end{defn}

%
\begin{rem}
By setting $q_x(t,y): = p(t,y) + \langle{q(t,y)},{x}\rangle$, the extended
Riccati system may be written in condensed form as
%
%
\begin{equation}
\label{Eq:Eric_condensed} \frac{\partial}{\partial t}q_x(t,y) = \cR_x
\bigl(q(t,y)\bigr),\qquad  q_x(0,y) = \langle y,x\rangle\qquad \forall x \in D.
\end{equation}
In this notation the minimality property can we written as
\[
q_x(t,y) \le{\widetilde{q}}_x(t,y)\qquad \forall x\in D, t
\in [0,{\widetilde{T}}], y \in\cY.
\]
\end{rem}

%
\begin{rem}\label{rem112}
The following properties are easy to see: If for a given starting
value $y \in\cY$ there is only one solution to the extended Riccati
system, then it is automatically a minimal solution. Also, if for a
given starting value a minimal solution $(p,q)$ exists up to time
$T$, it is automatically the unique minimal solution. Indeed, if
there were another minimal solution $({\widetilde{p}},{\widetilde
{q}})$, then
\[
p(t,y) + \bigl\langle{q(t,y)},{x}\bigr\rangle = {\widetilde{p}}(t,y) + \bigl
\langle {{\widetilde{q}}(t,y)},{x}\bigr\rangle
\]
for all $t \in[0,T]$, $x \in D$. Since $D$ contains $d$ linearly
independent points and $0$, it follows that $p = {\widetilde{p}}$ and
$q = {\widetilde{q}}$ in this case.
\end{rem}

We can now formulate our main results on the behavior of exponential
moments of affine processes.

%
\begin{thmm}[(Real moments of affine processes)]\label{thmm:main_results}
Let $X$ be an affine process on $D$, and let $T \ge0$.
\begin{longlist}[(a)]
\item[(a)]
Let $y \in\RR^d$, and suppose that $\mathbb
{E}^{x}[e^{\langle{y},{X_T}\rangle}] < \infty$ for some $x \in
D^\circ$. Then $y \in
\cY$ and
there exists a unique minimal solution $(p,q)$ up to time $T$ of the
extended Riccati system \eqref{Eq:Eric}, such that
%
%
\begin{equation}
\label{eq: transform formula} \mathbb{E}^{x}\bigl[e^{\langle{y},{X_t}\rangle}\bigr] = \exp
\bigl(p(t,y) + \bigl\langle {q(t,y)},{x}\bigr\rangle\bigr)
\end{equation}
holds for all $x \in D$, $t \in[0,T]$.
\item[(b)]
Let $y \in\cY$, and suppose that the extended
Riccati system \eqref{Eq:Eric} has solutions $({\widetilde{p}},
{\widetilde{q}})$ that
start at $y$
and exist up to $T$. Then $\mathbb{E}^{x}[e^{\langle{y},{X_T}\rangle
}] < \infty$ and
there exist unique minimal solutions $(p,q)$ up to time $T$ of the
extended Riccati system such that \eqref{eq: transform formula}
holds for all $x \in D$, $t \in[0,T]$.
\end{longlist}
\end{thmm}

%
\begin{rem}We emphasize that in point (b) of the theorem
$p = {\widetilde{p}}$ and $q = {\widetilde{q}}$ does not necessarily
hold, that is, the
candidate solutions $({\widetilde{p}},{\widetilde{q}})$ have to be
replaced by the
minimal solutions $(p,q)$ in order for \eqref{eq: transform formula}
to hold true.
\end{rem}

The following corollary is a conditional version of
Theorem~\ref{thmm:main_results} and thus extends the corresponding
result [\citet{Filipovic2009}, Theorem~3.3(iv)] for affine diffusions
on canonical state-spaces:

%
\begin{cor}
Suppose that the conditions of either
Theorem~\ref{thmm:main_results}\textup{(a)}
or~\textup{(b)} are
satisfied,
and let $(p,q)$ be the associated minimal
solutions of the Riccati system \eqref{Eq:Eric}. Then also $\mathbb
{E}^{x}[e^{\langle{q(T-t,y)},{X_t}\rangle}] < \infty$ and
\[
\mathbb{E}^{x}\bigl[e^{\langle{y},{X_T}\rangle}|\cF_t\bigr] = \exp
\bigl(p(T-t,y) + \bigl\langle q(T-t,y, X_t)\bigr\rangle\bigr)
\]
holds for all $x \in D$, $t \in[0,T]$.
\end{cor}

The next proposition provides a way to identify whether some solution
$({\widetilde{p}},{\widetilde{q}})$ of the extended Riccati system is
in fact the minimal
solution.

%
\begin{prop}\label{Prop:minimal_solutions}
Let $X$ be an affine process, and let $({\widetilde{p}},{\widetilde
{q}})$ be a solution
up to time $T \ge0$ of the extended Riccati system associated to $X$.
Each of the following
conditions is sufficient for $({\widetilde{p}},{\widetilde{q}})$ to
be the unique minimal
solution:
\begin{longlist}[(a)]
\item[(a)]$X$ is a diffusion process; 
\item[(b)]$\cY= \RR^d$; 
\item[(c)]$\cY$ is open; 
\item[(d)]${\widetilde{q}}(t,y) \in\cY^\circ$ for all $t \in[0,T)$.
\end{longlist}
\end{prop}

\begin{pf}
From Definition~\ref{Def:D} of $\cY$ it follows that $\mathrm{(a)}
\Rightarrow\mathrm{(b)} \Rightarrow\mathrm{(c)} \Rightarrow\mathrm{(d)}$, that is, it is
sufficient to show that (d) implies uniqueness of the solution
$({\widetilde{p}},{\widetilde{q}})$. But $R$ is locally Lipschitz on
$\cY^\circ$, such
that standard ODE results imply that $({\widetilde{p}},{\widetilde
{q}})$ is the unique
(and hence unique minimal) solution of the extended Riccati system
\eqref{Eq:Eric} on $[0,T)$. Due to continuity, ${\widetilde{q}}$ is
unique on
the compact interval $[0,T]$ as well.
\end{pf}

%
\begin{rem}\label{rem wugalter}
Condition (b) is equivalent to $\int_{|\xi| \ge1} e^{\langle
{y},{\xi}\rangle}
K(x,d\xi) < \infty$ for all $x \in D$, $y \in\RR^d$, that is, to the
jump measure having exponential moments of all orders. In this special
case analogues of Theorem~\ref{thmm:main_results} have been shown in
\citet{Spreij2010} and \citet{Cheridito2011}. This condition is
restrictive, as it is typically not satisfied in applications; cf.
Section~\ref{Sec:Examples}.
%
\end{rem}

We briefly discuss two important special cases, in which great
simplifications of the results occur. These cases have been treated
previously in the literature, but serve as a first ``sanity check'' of
the main results of this article.

%
\begin{example}[(Affine diffusion)]\label{ex filipovic}
Suppose that the affine process $X$ is a diffusion. In this case
$K(x,\cdot) = 0$ for all $x \in D$ and consequently $\cY= \RR^d$ and
the functions $F(y), R_1(y), \ldots, R_d(y)$ are quadratic
polynomials (hence locally Lipschitz continuous everywhere). In this
case any solution of the extended Riccati system is unique, and there
is no need to introduce the concept of minimal solutions; see
Proposition~\ref{Prop:minimal_solutions}(a) above.
Thus Theorem~\ref{thmm:main_results} holds true even with ``minimal
solution'' replaced by ``solution.'' For the case of affine diffusions
on canonical state spaces, the analogue of
Theorem~\ref{thmm:main_results} has been shown in
\citeauthor{Filipovic2009} [(\citeyear{Filipovic2009}), Theorem~3.3].
\end{example}

%
\begin{example}[(L\'evy process)]\label{ex levy}
Suppose that $X$ is a L\'evy process. Then $X$ is an affine process
with $R(y) = 0$ and with $F(y)$ equal to the L\'evy exponent of $X$.
Consequently $\cY$ is simply the effective domain of the L\'evy
exponent. The extended Riccati system has unique global solutions
for each $y \in\cY$, which are given by $p(t,y) = t F(y)$ and
$q(t,y) = y$ for $t \ge0$. It follows from
Theorem~\ref{thmm:main_results} that $\mathbb{E}^{x}[e^{\langle
{y},{X_t}\rangle}]$ is
finite if and only if $y \in\cY$, and in case of finiteness we have
$\mathbb{E}^{x}[e^{\langle{y},{X_t}\rangle}] = \exp(tF(y) + \langle
{y},{x}\rangle)$. In
particular, finiteness of exponential moments is a time-independent
property; that is, for given $y \in\RR^d$ the exponential moment
$\mathbb{E}^{x}[e^{\langle{y},{X_t}\rangle}]$ is either finite for
all $t > 0$ or for
no $t > 0$. Of course, all these results are well known in the case
of L\'evy processes and can be found, for example, in
\citeauthor{Sato1999} [(\citeyear{Sato1999}), Theorem~25.17].
\end{example}

\subsection{Complex moments of affine processes}
In this subsection we give an analogue of
Theorem~\ref{thmm:main_results} for complex exponential moments of
$X$. The first step is to analytically extend the functions $F$ and
$R$. We introduce the following notation: For a set $A \subset
\RR^d$ write
\[
S(A):= \bigl\{u \in\CC^d\dvtx \Re u \in A\bigr\}
\]
for the complex ``strip'' generated by $A$.

%
\begin{prop}\label{Prop:analytic_extension}
Let $X$ be an affine process, and suppose that $\cY^\circ\neq
\varnothing$. Then, for every $x \in D$, the function $\cR_x$ defined
in \eqref{Eq:Rx_def} has an analytic extension to $S(\cY^\circ)$
which we also denote by $\cR_x$. Moreover it holds that
\[
\cR_x(u) = F(u) + \bigl\langle{R(u)},{x}\bigr\rangle,\qquad  x \in D, u \in
S\bigl(\cY ^\circ\bigr),
\]
where $F,R$ are the analytic extensions of the functions defined in
\eqref{Eq:FR_def} to $S(\cY^\circ)$.
\end{prop}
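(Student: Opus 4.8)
The plan is to establish analyticity of $\cR_x$ on the strip $S(\cY^\circ)$ by exhibiting the defining formula \eqref{Eq:Rx_def} as a sum of manifestly analytic terms, with the only subtlety being the integral term against $K(x,d\xi)$. The polynomial part $\frac{1}{2}\scal{u}{a(x)u} + \scal{b(x)}{u}$ is a polynomial in $u \in \CC^d$, hence entire; so everything reduces to showing that
\[
g_x(u) := \int_{\RR^d \setminus \set{0}} \left(e^{\scal{\xi}{u}} - 1 - \scal{h(\xi)}{u}\right) K(x,d\xi)
\]
defines an analytic function on $S(\cY^\circ)$. First I would split the domain of integration into $\set{|\xi| \le 1}$ and $\set{|\xi| > 1}$. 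On $\set{|\xi| \le 1}$ the integrand is bounded in modulus by a constant times $|\xi|^2 \norm{u}^2$ uniformly for $u$ in compact subsets of $\CC^d$ (Taylor expansion of the exponential), and since $\int_{|\xi| \le 1}|\xi|^2 K(x,d\xi) < \infty$ by Theorem~\ref{Thm:semimartingale}, this piece is entire by the standard theorem on analyticity of parameter integrals (dominated convergence plus Morera, or differentiation under the integral sign). The large-jump piece $\int_{|\xi| > 1} e^{\scal{\xi}{u}} K(x,d\xi)$ (the linear correction $-1 - \scal{h(\xi)}{u}$ contributes only a finite measure of mass on $|\xi|>1$, giving an affine and hence entire term) is where the strip enters.

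For the large-jump integral, the key observation is that for $u = y + iz$ with $y \in \RR^d$, $z \in \RR^d$ one has $|e^{\scal{\xi}{u}}| = e^{\scal{\xi}{y}}$, so the modulus of the integrand depends only on $\Re u = y$. By Definition~\ref{Def:D}, $\int_{|\xi| \ge 1} e^{\scal{\xi}{y}} K(x,d\xi) < \infty$ precisely for $y \in \cY$, and in particular for $y \in \cY^\circ$. Fix $u_0 \in S(\cY^\circ)$ and a small closed polydisc $\overline{B}$ around $u_0$ contained in $S(\cY^\circ)$; then $\Re u$ ranges over a compact subset $C$ of $\cY^\circ$ as $u$ ranges over $\overline{B}$. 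Since $\cY$ is convex and $C \subset \cY^\circ$ is compact, the function $y \mapsto \int_{|\xi| \ge 1} e^{\scal{\xi}{y}} K(x,d\xi)$ is finite and (being convex and finite on a neighborhood of $C$) bounded on $C$ by some constant $M$; this furnishes the $K(x,d\xi)$-integrable dominating function $e^{\scal{\xi}{\Re u}} \le \max_{y \in C} e^{\scal{\xi}{y}}$, whose integral is bounded by $M$ uniformly for $u \in \overline{B}$. With this uniform integrable bound in hand, analyticity on the interior of $\overline{B}$ follows from the usual criterion: for each $\xi$, $u \mapsto e^{\scal{\xi}{u}}$ is entire, the integral converges uniformly on $\overline{B}$, so Morera's theorem (applied along closed triangles in each complex coordinate, Fubini to interchange the contour integral with the $K$-integral, Cauchy's theorem to kill the inner integral) shows $g_x$ is analytic at $u_0$. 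Since $u_0 \in S(\cY^\circ)$ was arbitrary, $g_x$ is analytic on all of $S(\cY^\circ)$, and the extension agrees with \eqref{Eq:Rx_def} on $\cY^\circ \subset S(\cY^\circ)$ by construction.

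Finally, the decomposition $\cR_x(u) = F(u) + \scal{R(u)}{x}$ for $u \in S(\cY^\circ)$ follows from uniqueness of analytic continuation: by Proposition~\ref{Prop:FR} the identity $\cR_x(y) = F(y) + \scal{R(y)}{x}$ holds for all $x \in D$ and $y \in \cY$, in particular on the open set $\cY^\circ$; applying the argument above with $m(d\xi)$ in place of $K(x,d\xi)$ and with $\mu^i(d\xi)$ (noting $\mu^i = K(e_i, \cdot) - m$ is a signed measure whose positive and negative parts both integrate $e^{\scal{\xi}{y}}$ on $|\xi|\ge 1$ for $y \in \cY$, since $\cY$ is defined via $K(x,\cdot)$ for \emph{all} $x\in D$ including $x=0$ and $x = e_i$) yields analytic extensions of $F$ and $R_i$ to $S(\cY^\circ)$. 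Two analytic functions on the connected open set $S(\cY^\circ)$ that agree on the nonempty open subset $\cY^\circ$ of $\RR^d$ must agree everywhere on $S(\cY^\circ)$; this gives the claimed identity and also shows the extension of $\cR_x$ is unique. The main obstacle is the large-jump integral: one must carefully extract, from convexity of $\cY$ and Definition~\ref{Def:D}, a locally uniform $K(x,d\xi)$-integrable domination that legitimizes both the interchange of integration and differentiation (or the Morera argument) on the full strip rather than just on the real slice.
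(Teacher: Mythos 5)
Your overall plan is the right one and is essentially what the paper's cited reference does (the paper simply points to \cite[Theorem 25.17]{Sato1999}, which carries out this very argument in the one-dimensional L\'evy-Khintchine case; you are unpacking it in the multi-dimensional, affine-in-$x$ setting). However, there is a concrete gap in the domination step for the large-jump integral, which you yourself flag as the crux. You need a $K(x,d\xi)$-integrable function dominating $|e^{\scal{\xi}{u}}|=e^{\scal{\xi}{\Re u}}$ as $u$ ranges over a closed polydisc $\overline B\subset S(\cY^\circ)$, and you propose $\max_{y\in C}e^{\scal{\xi}{y}}$ where $C=\Re\,\overline B\subset\cY^\circ$. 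But you justify its integrability by the boundedness of $y\mapsto\int_{|\xi|\ge 1}e^{\scal{\xi}{y}}K(x,d\xi)$ on $C$, i.e.\ by $\sup_{y\in C}\int e^{\scal{\xi}{y}}K(x,d\xi)\le M$. That inequality points the wrong way: $\int\max_{y\in C}e^{\scal{\xi}{y}}K(x,d\xi)\ge\sup_{y\in C}\int e^{\scal{\xi}{y}}K(x,d\xi)$, so a bound on the right-hand side says nothing about the left. The fix is to take $C$ to be not just compact but a polytope (e.g.\ a small cube) around $\Re u_0$ with vertices $y_1,\dotsc,y_N\in\cY^\circ$; since $y\mapsto e^{\scal{\xi}{y}}$ is convex, $\max_{y\in C}e^{\scal{\xi}{y}}=\max_j e^{\scal{\xi}{y_j}}\le\sum_j e^{\scal{\xi}{y_j}}$, and Definition~\ref{Def:D} applied at each vertex $y_j$ gives integrability. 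With that repair, the Morera/Fubini argument goes through. A secondary, smaller imprecision: you write $\mu^i=K(e_i,\cdot)-m$ and appeal to Definition~\ref{Def:D} ``including $x=e_i$'', but $D$ is only assumed to contain $0$ and to linearly span $\RR^d$; the basis vectors $e_i$ need not lie in $D$. This is easily patched by expressing each $\mu^i$ as a finite linear combination of the measures $K(x^j,\cdot)-m$ for $d$ linearly independent points $x^j\in D$, whence $\int_{|\xi|\ge 1}e^{\scal{\xi}{y}}\,d|\mu^i|<\infty$ for $y\in\cY$ still follows.
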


\begin{pf}
Follows from standard results on L\'evy--Khintchine-type functions;
see, for example, \citet{Sato1999}, Theorem~25.17.
\end{pf}

%
\begin{defn}[(Complex Riccati system)]\label{Def:complex_riccati}
Let $X$ be an affine process such that $\cY^\circ\neq\varnothing$,
and let $F,R$ be defined as in Proposition~\ref{Prop:analytic_extension}.
Let $T \ge0, y \in S(\cY^\circ)$, and let
\[
\phi\dvtx  t \mapsto \phi(t,y),\qquad \psi\dvtx  t \mapsto\psi(t,y)
\]
be $C^1$-functions
mapping $[0,T]$ to $\CC$ [resp., $S(\cY^\circ)$] that satisfy
\begin{subequations}\label{Eq:Cric}
%
%
\begin{eqnarray}
\frac{\partial}{\partial t} \phi(t,y) &=& F\bigl(\psi(t,y)\bigr),\qquad \phi(0,y) =
0,\label{Eq:Cric1}
\\
\frac{\partial}{\partial t} \psi(t,y) &=& R\bigl(\psi(t,y)\bigr),\qquad \psi(0,y) = y
\label{Eq:Cric2}
\end{eqnarray}
\end{subequations}
for all $t \in[0,T]$. Then we call $(\phi,\psi)$ a solution (up to
time $T$ and with starting point $u$) of the complex Riccati system
associated to $X$.
\end{defn}

%
\begin{rem}\label{rem comparison}
Let us compare the complex Riccati system to the extended Riccati
system \eqref{Eq:Eric1}--\eqref{Eq:Eric2}. We observe that if $u \in
S(\cY^\circ)$ is real valued, that is, has $\Re u = y$ and $\Im u = 0$,
then any solution $(\phi,\psi)$ up to time $T$ of the complex
Riccati system is also a solution of the extended Riccati system; that
is, setting $p(t,y) = \phi(t,u)$ and $q(t,y) = \psi(t,u)$ for all
$t \in[0,T]$ defines a solution $(p,q)$ of the extended Riccati
system. The reverse is not necessarily true. Furthermore we point
out that for a given starting value $u$ any solution $(\phi,\psi)$
of the complex Riccati system is automatically the unique solution.
This is in contrast to the extended Riccati system, where solutions
starting at the boundary may be nonunique. This difference is just
a consequence of the fact that solutions of the complex Riccati
system are restricted to stay in the open domain $S(\cY^\circ)$, on
which $F$ and $R$ are locally Lipschitz.
\end{rem}

%
\begin{ass}\label{Ass:complex}
Let $X$ be an affine process with state space $D$ and assume that either:
\begin{longlist}[(i)]
\item[(i)]$D = \mathbb{R}_{\geq0}^m \times\RR^n$, or\vspace*{1pt}
\item[(ii)]
$D = S_d^+$ and there exists some
$x\in
S_d^{++}$ such that
$a(x)$ either vanishes, or it is nondegenerate.
\end{longlist}
\end{ass}

%
\begin{rem}
Note that in the notation of \eqref{Eq:characteristics_affine} $a(x)$
is given as a symmetric $\frac{d(d+1)}{2}\times\frac{d(d+1)}{2} $
matrix. Of course we can also interpret it as quadratic form on
$S_d^+$, which is more natural and, in particular, a coordinate free notion.
A simple characterization of (ii) in terms of the
admissible parameter set is given in Remark~\ref{rem assumption matrix}.
\end{rem}

The analogue of Theorem~\ref{thmm:main_results} for complex moments
reads as follows.

%
\begin{thmm}[(Complex moments of affine processes)]\label{thmm:main_complex}
Let $X$ be an affine process that satisfies Assumption~\ref
{Ass:complex}. Let $T \ge0$, $u \in S(\cY^\circ)$ and suppose that the
extended Riccati system \eqref{Eq:Eric1}--\eqref{Eq:Eric2}
has a solution $(p,q)$ with initial value $\Re u$ up to time $T$
such that $q(t,\Re u) \in\cY^\circ$ for all $t \in[0,T]$. Then
also the complex Riccati system \eqref{Eq:Cric} has a solution
$(\phi,\psi)$ with initial value $u$ up to time $T$,
$\mathbb{E}^{x}[|e^{\langle{u},{X_t}\rangle}|] < \infty$ and
%
%
\begin{equation}
\label{eq: complex transform formula} \mathbb{E}^{x}\bigl[e^{\langle{u},{X_t}\rangle}\bigr] = \exp\bigl(
\phi(t,u) + \bigl\langle{\psi (t,u)},{x}\bigr\rangle\bigr)
\end{equation}
for all $x \in D, t \in[0,T]$.
\end{thmm}

\section{Applications in mathematical finance}\label{sec:applications}
This section presents applications of our main results, Theorems
\ref{thmm:main_results} and~\ref{thmm:main_complex}, to
mathematical finance in the spirit of Duffie, Filipovi{\'c} and
Schachermayer (\citeyear{Duffie2003}), Section~13.
We consider the following
generic setup: A traded asset $S$ is modeled by the exponential of an
affine factor process $X$ with
state space $D$, that is, $S = e^{\langle{\theta},{X}\rangle}$ for
some $\theta
\in\RR^d$. Moreover, bond prices are given through an affine short
rate model of the form
\[
r_t = L(X_t)=l+\langle\lambda, X_t\rangle,
\]
where $l\in\mathbb R$ and $\lambda\in\mathbb R^d$. This setup
includes, in particular, affine term structure models of interest
rates [\citet{cir85}, \citet{duffiekan96,daisingleton00}, etc.], affine
stochastic volatility models [\citet{Heston1993}, \citet{Bates2000},
\citet{Barndorff2001}, etc.] and combinations with possible
correlation of
short rate and asset prices. Also credit risk can be included, when
$r_t$ is interpreted as a superposition of a risk-free short rate
and an affine default intensity process; cf. \citet{lando1998}.
Moreover, we can cover a setup with multiple possibly dependent assets
simply by setting $S^i = \exp{\langle{\theta_i},{X}\rangle}$ for
different $\theta
_i \in\RR^d$. For most applications the measures $(\PP^x)_{x \in D}$
should be considered risk-neutral measures, although there are few
cases where also the behavior under the physical measure is of
relevance. Many problems of interest can be reduced to determining the
$\mathcal
F_t$-conditional expectations
%
%
\begin{equation}
\label{Eq:Qt_pricing} Q_{T-t} g(x)=\mathbb E^x
\bigl[e^{-\int_t^T L(X_s)\,ds}g(X_T)\mid\mathcal F_t\bigr],
\end{equation}
for some measurable function $g\dvtx D \to\RR$. In
particular:
\begin{itemize}
\item$g \equiv1$ corresponds to bond pricing;
\item$g(x) = e^{\langle{\theta},{x}\rangle}$ corresponds to
checking for the
martingale property of the discounted asset price;
\item$g(x) = e^{\langle{y\theta},{x}\rangle}$, $y \in\RR$
corresponds to
calculating expectations of the type $\mathbb{E}^{x}[S_t^y]$ which are relevant
for evaluation of power utility and determining the time of ``moment
explosions.''
\item$g(x) = e^{\langle{u},{x}\rangle}$, $u \in\CC^d$ corresponds
to Fourier
methods for the pricing of European contingent claims.
\end{itemize}
For a more detailed account of the literature on affine processes in
financial mathematics, we refer to Duffie, Filipovi{\'c} and
Schachermayer (\citeyear{Duffie2003}), Section~13;
for an easy-to-read introduction to discounting and pricing
techniques (using the Fourier--Laplace transform), we refer to
\citet{Filipovic2009}, Section~4. Let us also remark that already
Duffie, Filipovi{\'c} and
Schachermayer (\citeyear{Duffie2003}), Section~11, gives sufficient conditions on an
affine process such that the pricing operator $Q_{T-t}$ is well defined,
but the results only apply to the state space $\mathbb{R}_{\geq0}^m
\times
\RR^n$ and conditions are less general than the ones we obtain.

To deal with the discounting term in \eqref{Eq:Qt_pricing} we use
the extension-of-state-space approach outlined in
Duffie, Filipovi{\'c} and
Schachermayer [(\citeyear{Duffie2003}), Section~11.2].
We define the extended state
space 
$\widetilde D:=D\times\mathbb R$. Let
$(a,\alpha,b,\beta,\break m(d\xi),\mu(d\xi))$ be the parameters of $X$ in
the sense of Theorem~\ref{Thm:semimartingale}. Following
Duffie, Filipovi{\'c} and
Schachermayer [(\citeyear{Duffie2003}), Section~11.2], we have that $Z:=(X,Y)$ where
$Y_t:=y+\int_0^t L(X_s)\,ds$ is an affine process on $\widetilde D$
with parameters $(a',\alpha',b',\beta',m'(d\xi),\mu'(d\xi))$ given
by
\[
a'=\pmatrix{ a&0
\vspace*{2pt}\cr
0&0 },\qquad \alpha'_i=\pmatrix{\alpha_i&0
\vspace*{2pt}\cr
0&0},\qquad b'=\pmatrix{ b
\vspace*{2pt}\cr
l}
\]
and
\[
\beta'_i=\pmatrix{
\beta_i
\vspace*{2pt}\cr
\lambda }, \qquad i=1,\ldots,d, \beta_{d+1}'=0,
\]
and finally
\[
m'(d\xi)=m(d\xi)\times\delta_0\bigl(d
\xi'\bigr), \qquad \mu_i'(d\xi)=
\mu_i(d\xi )\times \delta_0\bigl(d\xi'
\bigr),
\]
where $\delta_0(d\xi')$ denotes the unit mass at $0$. Let $F(u)$ and
$R(u)$ be the functions associated with $X$ through Proposition~\ref
{Prop:FR}, and let $q\in\mathbb C$. Then we can
introduce the new functions
\[
F'(u,q):=F(u)+lq, \qquad R'(u,q)=R(u)+\lambda q,
\]
which are related to the functions $(F_Z,R_Z)$ of the extended
process $Z$ in the way that $R_Z=(R'(u,q),0)$, while
$F_Z(u,q)=F'(u,q)$. We consider now solutions $\phi(t,u,q)$ and
$\psi(t,u,q)$ of the system
\begin{subequations}\label{Eq:Ric_lambda}
%
%
\begin{eqnarray}
\label{eqphi 1} \partial_t\phi(t,u,q)&=&F'\bigl(
\psi(t,u,q),q\bigr), \qquad \phi(0,u,q)=0,
\\
\label
{eqpsi 1} \partial_t\psi(t,u,q)&=&R'\bigl(
\psi(t,u,q),q\bigr),\qquad  \psi(0,u,q)=u.
\end{eqnarray}
\end{subequations}
Note that $\psi$ still is $d$-dimensional.
These solutions are related to the (not necessarily unique)
solutions $\phi_Z,\psi_Z$ of the corresponding $(d+2)$-dimensional
system associated with $F_Z,R_Z$ as follows:
$\phi_Z(t,(u,q))=\phi(t,u,q)$ and $\psi_Z(t,\break(u,q))=(\psi(t,u,q),q)$.

\subsection{Bond pricing in affine term structure models}

The following result is an immediate consequence of Theorem~\ref
{thmm:main_results}. As such it generalizes
Duffie, Filipovi{\'c} and
Schachermayer [(\citeyear{Duffie2003}), Proposition~11.2], as well as
\citeauthor{Filipovic2009} [(\citeyear{Filipovic2009}), Theorem~4.1].

%
\begin{thmm}\label{discounting thmm}
Let $\tau>0$. The following are equivalent:

\begin{longlist}[(1)]
\item[(1)] 
$\mathbb E^x[e^{-\int_0^\tau L(s)\,
ds}]<\infty$,
for some $x\in D^\circ$.
\item[(2)] 
For $q=-1$, there exists a solution
$(\widetilde\phi,\widetilde\psi)$ on $[0,\tau]$ to the generalized
Riccati differential equations \eqref{eqphi 1}--\eqref{eqpsi 1}
with initial data $u=0$.
\end{longlist}
In any of the above cases, let us define $A(t):=-\phi(t,(0,-1))$,
$B(t):= -\psi(t,\break(0,-1))$ from the unique minimal solution
$(\phi,\psi)$ of equations \eqref{eqphi 1}--\eqref{eqpsi
1}\mbox{.\hskip.2pt\footnote{It follows from Theorem~\ref{thmm:main_results} that if
some solution exists on a nonempty interval $[0,T]$, so does the
unique minimal solution.}} Then the price $P(t,T)$ of a zero-coupon
bond is given, for all $0\leq t\leq T\leq\tau$, and all $x\in D$,
by
%
%
\begin{equation}
P(t,T):=\mathbb E^x\bigl[e^{-\int_t ^T L(s)\,ds}\mid\mathcal
F_t\bigr]=e^{-A(T-t)-\langle B(T-t),X_t\rangle}.
\end{equation}
\end{thmm}

\subsection{Martingale conditions}
Conditions for the exponentials of affine processes to be martingales
have been obtained, for example, in \citet{mayerhofer}. The following
result extends known criteria and follows again from Theorem~\ref
{thmm:main_results}.

%
\begin{thmm}\label{martingale thmm}
Let ${\widetilde{S}} = e^{-\int_0^t L(X_t)} e^{\langle{\theta
},{X_t}\rangle}$ be the
discounted asset price. Then the
following holds:
\begin{longlist}[(1)]
\item[(1)] Suppose that $\theta\in\cY^\circ$, $F(\theta) = l$ and
$R(\theta) = \lambda$. Then $({\widetilde{S}}_t)_{t \ge0}$ is a true
martingale under any $\PP^x, x \in D$.
\item[(2)] Let $x \in D^\circ$. The process $({\widetilde{S}}_t)_{t
\ge0}$ is a true
$\PP^x$-martingale if and only if $\theta\in\cY$, $F(\theta) = l$,
$R(\theta) = \lambda$ and $\phi(t, \theta, -1)
= 0$ and $\psi(t, \theta, -1) = \theta$ are the unique minimal
solutions of
the Riccati equations \eqref{eqphi 1}--\eqref{eqpsi 1}.
\end{longlist}
\end{thmm}

Using \eqref{Eq:Qt_pricing} it is clear that ${\widetilde{S}}$ is a
$\PP
^x$-martingale if and only if $Q_t g(x) = g(x)$ for all $t \in\mathbb
{R}_{\geq0}$
and with $g(x) = e^{\langle{\theta},{x}\rangle}$. Applying
Theorem~\ref
{thmm:main_results} to the extended process $Z$ the above result
follows immediately.

\subsection{Moment explosions}
Here we set $L = 0$ for simplicity. It is well understood that the
existence of moments $\mathbb{E}[S_t^y]$ with $y \in\RR$ is intimately
connected to the shape of the implied volatility surface derived from
the prices of options on the underlying $S$; cf. \citet{Lee2004},
\citet
{K2008a}. Of particular interest is the time of moment explosion, that
is, the quantity
\[
T_+(y) = \sup\bigl\{t \ge0\dvtx \mathbb{E}\bigl[S_t^y\bigr]
< \infty\bigr\}.
\]
Applying again Theorem~\ref{thmm:main_results} we obtain the following:

%
\begin{prop}
Let $S = \exp{\langle{\theta},{X}\rangle}$ with $\theta\in\cY$,
and let $y \in
\RR$.
\begin{longlist}[(1)]
\item[(1)] If $y\theta\in\cY^\circ$, then $T_+(y)$ is the maximal
lifetime of the solution $(p,q)$ of the extended Riccati system.
\item[(2)] If $y\theta\in\cY$, then $T_+(y)$ is the maximal lifetime
of the unique minimal solution $(p,q)$ of the extended Riccati system.
If $y\theta\notin\cY$, then $T_+(y) = 0$.
\end{longlist}
\end{prop}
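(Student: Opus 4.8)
The plan is to reduce everything to Theorem~\ref{thm:main_results}. Since $L\equiv 0$ we have $S_t^y=e^{\scal{\eta}{X_t}}$ with $\eta:=y+\theta$, so $\Ex{x}{S_t^y}=\Ex{x}{e^{\scal{\eta}{X_t}}}$. First I would record the basic equivalence that combines Theorem~\ref{thm:main_results}\ref{item:main1} and \ref{thm:main_results}\ref{item:main2}: for $x\in D^\circ$ the moment $\Ex{x}{e^{\scal{\eta}{X_t}}}$ is finite if and only if $\eta\in\cY$ and the extended Riccati system \eqref{Eq:Eric} admits a solution starting at $\eta$ on $[0,t]$; moreover, when this holds the transform formula \eqref{eq: transform formula} is valid simultaneously for all $x\in D$, so finiteness at one interior point is equivalent to finiteness at every point and $T_+(y)$ does not depend on $x$. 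Since a solution on $[0,t]$ restricts to one on $[0,t']$ for every $t'\le t$, and --- by Theorem~\ref{thm:main_results}\ref{item:main2} --- existence of some solution on $[0,t]$ is equivalent to existence of the unique minimal solution on $[0,t]$, the set $\set{t\ge 0:\Ex{x}{S_t^y}<\infty}$ is an interval containing $0$ whose supremum equals the maximal lifetime of the minimal solution; by the equivalence this supremum is $T_+(y)$. This already proves part (2) in the case $\eta\in\cY$.

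Next I would clear the two remaining pieces. If $\eta\notin\cY$, then Theorem~\ref{thm:main_results}\ref{item:main1}, read contrapositively, forces $\Ex{x}{e^{\scal{\eta}{X_t}}}=+\infty$ for every $t>0$ and every $x\in D^\circ$, since finiteness at a single interior point and a single positive time would already yield $\eta\in\cY$; hence the interval of finiteness is $\set{0}$ and $T_+(y)=0$, completing (2). For part (1), when $\eta\in\cY^\circ$ the function $R$ is locally Lipschitz on a neighbourhood of $\eta$ inside $\cY^\circ$, so standard ODE existence-uniqueness (Picard--Lindel\"of) produces a unique solution $(p,q)$ of \eqref{Eq:Eric} with $q$ taking values in $\cY^\circ$, and whenever $q(t)\in\cY^\circ$ Proposition~\ref{Prop:minimal_solutions}\ref{Item:q_interior} shows this solution is already the unique minimal solution. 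Thus in the interior regime the qualifier `minimal' is superfluous and $T_+(y)$ equals the maximal lifetime of $(p,q)$, as claimed.

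The one point that needs care --- and essentially the only obstacle beyond invoking Theorem~\ref{thm:main_results} --- is what happens in (1) if the trajectory $q$ reaches the relative boundary of $\cY$ in finite time: there uniqueness of continuations may break down. In that situation one simply follows the unique minimal continuation furnished by Theorem~\ref{thm:main_results}\ref{item:main2}, so that `maximal lifetime' in (1) is to be read as the lifetime of that minimal continuation --- which is precisely the content of (2). What remains is routine bookkeeping that the interval of existence behaves consistently under restriction and extension of minimal solutions, which follows from the uniqueness of minimal solutions observed in Remark~\ref{rem112}.
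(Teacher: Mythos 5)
Your argument is correct and is essentially the intended derivation: the paper states the proposition with only the remark that it ``follows from applying Theorem~2.14,'' and your proof supplies exactly the missing bookkeeping. You correctly combine parts (a) and (b) of that theorem to turn the set of finiteness times into an interval whose supremum equals the maximal lifetime of the unique minimal solution, and you correctly dispose of the cases $y+\theta\notin\cY$ (contrapositive of part (a)) and $y+\theta\in\cY^\circ$ (Picard--Lindel\"of together with Proposition~2.17(d), which makes ``minimal'' superfluous while $q$ stays interior). Your closing observation --- that item (1) as literally phrased is ambiguous if the trajectory $q$ reaches $\partial\cY$ before blowing up, and that ``the solution'' must then be read as the unique minimal continuation so that (1) effectively reduces to (2) --- is a genuine and worthwhile clarification of a slight imprecision in the statement, not a gap in your proof.
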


Related applications include the approximation of more complicated
payoff functions by ``power payoffs'' [see \citet{Cheridito2011}] and
portfolio optimization
involving power utility; see \citet{PhDMuhleKarbe} and the references
quoted therein.

\subsection{Option pricing}
In general, European option payoffs are nonlinear functions that do
not fall under the setup of the previous subsction. Numerically
expensive Monte Carlo simulations may be avoided by the method of
Fourier pricing, if the characteristic function (or Fourier--Laplace
transform) is given in closed form; cf. \citet{carrmadan99}. This is
the case for affine
processes, and the key for applying Fourier pricing is our Theorem~\ref
{thmm:main_complex} on complex exponential moments. We provide here
an extension of Theorem~10.5
from the book of \citeauthor{FilipovicBook} [(\citeyear{FilipovicBook}),
Chapter~10], which has been
written in the context of affine diffusions, where certain
simplifications occur (most importantly $\cY^\circ= \RR^d$). For
general affine processes with jumps we have to impose some stronger
assumptions and obtain the following result. To allow for multi-asset
options, we consider a generic payoff $g\dvtx D \to\RR$ depending on all
components of the underlying factor process $X$. In typical
applications $g$ will be of the more specific form $g(x) = h(e^{\langle
{\theta},{x}\rangle})$ with $h\dvtx \mathbb{R}_{\geq0}\to\RR$ which
can be accomodated in the
theorem below by setting $q = 1$; see also
\citeauthor{FilipovicBook} [(\citeyear{FilipovicBook}), Theorem~10.6].

%
\begin{thmm}\label{thmm:fourier_pricing}Let $X$ be an affine process
satisfying Assumption~\ref{Ass:complex}.
Assume there exists a $d\times
q$ matrix $K$ such that the payoff function $g$ satisfies
%
%
\begin{equation}
g(x)=\int_{\mathbb R^d} e^{\langle v+iK\lambda, x\rangle}\widetilde g(\lambda)\,d
\lambda
\end{equation}
for some integrable function $\widetilde g\dvtx \mathbb R^q\rightarrow
\mathbb C$, $q\leq d$ and with $v \in\cY^\circ$. Suppose that~\eqref
{eqphi 1}--\eqref{eqpsi 1} has solutions on $
[0,\tau] $ for initial data $u=0$ and $u=v$, which stay
in $\cY^\circ$ for all $t\leq\tau$. Then
we have
\[
\mathbb E^x\bigl[e^{-\int_t^T L(s)\,ds}g(X_T)\bigr]=\int
_{\mathbb R^q}e^{\phi
(T-t,v+iK\lambda)+\langle\psi(T-t,v+iK\lambda),X(t)\rangle
}\widetilde g(\lambda)\,d\lambda,
\]
where $(\phi,\psi)$ are the unique solutions of \eqref{eqphi
1}--\eqref
{eqpsi 1} with complex initial data $v+iK\lambda$.
\end{thmm}

%

\subsection{Remarks on jump behavior and examples}\label{Sec:Examples}
As discussed in the\break \hyperref[sec1]{Introduction}, a main contribution
of this article
is that the results apply to conservative affine processes with
completely general jump measures. The condition that the jump measures
possess exponential moments of all orders that is imposed in \citet
{Spreij2010} and \citet{Cheridito2011} is typically not fulfilled in
financial modeling. Considering, for example, the jump measures of the
models discussed in \citet{Cont2004}, Chapter~4, the condition is
satisfied only for the Merton model, but not for the Kou, variance
gamma, normal inverse Gaussian, tempered stable and generalized
hyperbolic models.

For some affine processes with jumps, the solution of the Riccati
equations is known explicitly. In this case, even when not all
exponential moments of the jump measures exist, ad-hoc arguments based
on analyzing the singularities of the characteristic function can be
used to find sufficient conditions for the validity of an affine
transform formula; see \citet{Nicolato2003} for an example of this
approach. While this ad-hoc approach does not give a satisfactory
answer on the connection between exponential moments and solutions to
the Riccati equations in general, it can be sufficient for
applications. However, as the following examples illustrate, several
models proposed in the literature on financial mathematics are based on
affine processes, for which the Riccati equations do not allow for
explicit solutions. In these cases previous results do not apply and
also the ad-hoc approaches fail. Hence, Theorems~\ref
{thmm:main_results} and \ref{thmm:main_complex} are essential for the
applications outlined in the previous sections and cannot replaced by
simpler arguments or existing results.

%
\begin{example}
\citet{Wu2011} models the S\&P 500 index as
\[
S_t=S_0 \exp(L_{\int_0^t v_u \,du}),\qquad  t\in[0,T],
\]
where $L$ is a L\'evy process of unit variance at unit time, and a
time-change is induced by a general $\mathbb{R}_{\geq0}$-valued
affine process
$v_t$-independent of $L$---with functional characteristics $F, R$;
see Definition~\ref{def: admin}.\hskip.2pt\footnote{\citet{Wu2011} also specifies
a separate drift term, which we absorb into the drift of the L\'evy
process~$L$.} We assume a riskless rate of return~$r$ and denote the
log-returns process by $Y_{t}:=\log(S_{t}/S_0)=L_{\int_0^t v_u \,du}$.
Writing $g$ for the characteristic exponent of $L$, we have
\[
\mathbb E\bigl[e^{iuY_t}\mid v_0=v\bigr]=e^{iu \theta t}
\mathbb E\bigl[e^{g(iu)\int
_0^t v_u \,du}\bigr]=e^{iu \theta t+\phi(t, g(iu))+v \psi(t, g(iu))},
\]
where $(\phi, \psi)$ satisfy
%
%
\begin{equation}
\label{ricci and wu} \frac{\partial}{\partial t} \phi=F(\psi),\qquad \frac{\partial
}{\partial t} \psi=R(\psi) +
\zeta, \qquad \psi(0)=\phi(0)=0
\end{equation}
with $\zeta=g(iu)$. Under the risk-neutral measure, $e^{-rt}S_t$ must
be a martingale on $[0,T]$, whence
\[
\mathbb E\bigl[e^{L_{\int_0^t v_u \,du}}\bigr]<\infty,
\]
for each $t\in[0,T]$. Theorem~\ref{thmm:main_results} implies that
the Riccati equations \eqref{ricci and wu} with $\zeta=g(1)$ allow a
minimal solution $(p,q)$ on $[0,T]$. Furthermore, if $q(t)$
lies in $\mathcal Y^\circ$ for each $t\in[0, T]$, an application of
Theorem~\ref{thmm:main_complex} extends the validity of the affine
property \eqref{ap} to complex moments $u\in(1+\varepsilon)+i
\mathbb
R$, where $\varepsilon>0$. Hence the way is paved for pricing
contingent claims
on $S$ by using, for example, the Fourier pricing technique.
\end{example}

%
\begin{example}
\citet{schneideretal} propose a model for pricing credit default swaps
(CDS), where the hazard rate
is a linear functional of an affine process $(\eta,\gamma)$, given
under the risk-neutral measure by
\begin{eqnarray*}
d\eta_t&=&\kappa_\eta(\gamma_t-
\eta_t)\,dt+\sigma_\eta\sqrt{\eta _t}
\,dW_{\eta,t} + dZ^1_t,
\\
d\gamma_t&=&\kappa_\gamma(\zeta_\gamma-
\gamma_t)\,dt+\sigma _\gamma\sqrt {\gamma_t}
\,dW_{\gamma,t} + dZ^2_t.
\end{eqnarray*}
The two components are correlated via the instantaneous drift and by
simultaneous jumps of the compound Poisson process $Z_t$. The jump-size
distributions of the two components are assumed to be independent and
exponentially distributed. Also here, a closed-form expression
for the characteristic function of $(\eta, \gamma)$ is not available.
However, due to the exponential distribution of jump sizes, the domain
$\mathcal Y^\circ$ takes a particular, simple form
\[
\mathcal Y^\circ=(-\infty, \mu_\eta)\times(-\infty,
\mu_\gamma),
\]
where $\mu_\eta,\mu_\gamma$ are the expected jump sizes of $\eta,
\gamma
$, respectively.
In this case, one first produces a numerical solution of the extended
Riccati system on $[0,T]$. By construction,
this solution will lie in $\mathcal Y^\circ$. Combining this
approximate solution with a global error bound we can find a $T' \le T$
such that also the exact solution must exist and stay in $\mathcal
Y^\circ$ on $[0,T']$. Theorem~\ref{thmm:main_results} then yields the
existence of the associated
real exponential moments. Having this solution, one can proceed
to solve the ODE with complex initial data. Existence of these
solutions and the validity of the corresponding affine transform formula
is guaranteed by Theorem~\ref{thmm:main_complex}, and Theorem~\ref
{thmm:fourier_pricing} can be used for Fourier pricing of contingent
claims in this case of credit default swaps.
An extension to state-dependent jump behavior is straightforward and
can be similarly dealt by using Theorem~\ref{thmm:main_complex}.
\end{example}

We give a final example of an asset model for optimal portfolio choice
with affine factors
which exhibit a nontrivial correlation structure:

%
\begin{example}
\citet{leippoldwu} propose an affine model $(Y,X)$, where
$Y_{i,t+h}-Y_{i,t}=\log(S_{i,t+h}/S_{i,t})$ ($i=1,\ldots,d$) are
log-returns for assets $S_i$ ($i=1,\ldots,d$),
and $X$ is a general $d\times d$ positive semidefinite affine
jump-diffusion. They specify $Y,X$ as a solution to the SDE
\begin{eqnarray*}
dY_t&=& \bigl[ r \mathbf{1}+X_t\eta-\tfrac{1}{2}
\diag(X_t) \bigr] \,dt+ \sqrt{X_t}\,dZ_t,
\\
dX_t&=&\bigl(\Omega\Omega^\top+M X_t+X_t
M^\top\bigr) \,dt+\sqrt{X_t}\,dB_t
Q+Q^\top dB_t^\top\sqrt{X_t}+dJ_t.
\end{eqnarray*}
Here $\diag(X_t)=(X_{t,11},\ldots,X_{t,dd})^\top$, $\mathbf
{1}=(1,\ldots,1)^\top$, both $Z$ and $B$ are $d\times d$ standard Brownian motions,
with a certain correlation structure defined by a correlation parameter
$\rho\in\mathbb R^d$; see \citet{leippoldwu} for details. Moreover, $J$
is a pure jump-process independent of $(B, W)$, whose jump intensity is
an affine function of $X_t$. The parameters are given by $\eta\in
\mathbb R^d$ and $M, \Omega, Q$ are $d\times d$ matrices satisfying the
constraint
$\Omega\Omega^\top-(d-1)Q^\top Q\in S_d^+$, which guarantees a weak
solution $(Y,X)$ to the above SDE. In this model, closed-form solutions
for the associated Riccati equations exist only in the absence of jumps
in $X$ (i.e., $J=0$).
\citet{leippoldwu} consider an investor with CRRA utility of terminal
wealth $w_T$, trading in each of these asset $S_i$ and with riskless
investment oportunity at constant rate $r>0$. It turns out that his/her
value function is given by
\[
V(t, w_t,X_t)=\frac{w_t^{1-\gamma}}{1-\gamma}\exp\bigl(\tr
\bigl(A(T-t)X_t\bigr)+B(T-t)\bigr),
\]
while the vector of optimal portfolio weights for the risky assets
equals $\pi^*(t)=(\eta+2 A(T-t) Q^\top\rho)/\gamma$.
The functions $A,B$ satisfy a matrix-valued Riccati equation, and
\citet
{leippoldwu} make the salient assumption
that (a) the value function is well defined at the optimal trading
policy $\pi^*(t)$, which amounts to assuming the existence of
exponential moments of the process $(X,Y)$ for a certain initial value;
(b) the associated extended Riccati equations are (uniquely) solvable
and give the value of these exponential moments. Theorem~\ref
{thmm:main_results} in this paper now gives sufficient and necessary
conditions that allows us to check the validity of these assumptions.
\end{example}

\section{Proofs for real moments of affine processes}\label{sec: real proofs}

\subsection{Decomposability and dependency on the starting value}\label{sec4.1}
Definition~\ref{Def:affine_process} of an affine process
immediately implies a decomposability property of the laws $\PP^x$
on the path space; see also Duffie, Filipovi{\'c} and
Schachermayer
[(\citeyear{Duffie2003}), Theorem~2.15]. As in
Duffie, Filipovi{\'c} and
Schachermayer [(\citeyear{Duffie2003}), Definition~2.14],
we write $\PP\star\PP'$ for the
image of $\PP\times\PP'$ under the measurable mapping $(\omega,
\omega') \mapsto\omega+ \omega'\dvtx (\Omega\times\Omega, \cF\times
\cF) \to(\Omega, \cF)$.

%
\begin{prop}\label{Prop:decompose}
Let $X$ be an affine process with state space $D$. Its probability
laws $\PP^x$ satisfy the following decomposability property: Suppose
that $x, \xi$ and $x + \xi$ are in $D$. Then
%
%
\begin{equation}
\label{Eq:decompose} \PP^x \star\PP^\xi= \PP^0
\star\PP^{x + \xi}.
\end{equation}
\end{prop}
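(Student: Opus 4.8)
The plan is to reduce \eqref{Eq:decompose} to an identity between finite--dimensional distributions, then to an identity between their joint characteristic functions, which in turn follows from an iterated version of the affine property. As observed in Section~\ref{sec: definitions}, $\PP^x$ and hence the convolutions $\PP^x\star\PP^\xi$ and $\PP^0\star\PP^{x+\xi}$ may be regarded as laws on the Skorokhod space $\DD(\Rplus,\RR^d)$, whose Borel $\sigma$--field is generated by the coordinate projections; thus it suffices to show that both sides of \eqref{Eq:decompose} have the same finite--dimensional distributions, and since each such distribution is a finite measure on some $\RR^{dn}$, it suffices to compare joint characteristic functions. Now under $\PP\star\PP'$ the canonical process is the sum of two independent copies of the canonical processes under $\PP$ and $\PP'$, so for $0\le t_1<\dots<t_n$ and $u_1,\dots,u_n\in i\RR^d$,
\[
\int\prod_{j=1}^n e^{\scal{u_j}{X_{t_j}}}\,d(\PP^x\star\PP^\xi)=\Ex{x}{\textstyle\prod_{j=1}^n e^{\scal{u_j}{X_{t_j}}}}\cdot\Ex{\xi}{\textstyle\prod_{j=1}^n e^{\scal{u_j}{X_{t_j}}}},
\]
and likewise with $(x,\xi)$ replaced by $(0,x+\xi)$.

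The heart of the matter is a multi-time affine formula: for $u_1,\dots,u_n\in\cU$ and $0\le t_1<\dots<t_n$ there exist $c_n\in\CC$ and $v_n\in\cU$, \emph{not depending on $x$}, such that $\Ex{x}{\prod_{j=1}^n e^{\scal{u_j}{X_{t_j}}}}=\exp(c_n+\scal{x}{v_n})$. Granting this, the right--hand side of the display above equals $\exp(2c_n+\scal{x+\xi}{v_n})$, as does its analogue for $(0,x+\xi)$, so all finite--dimensional characteristic functions of $\PP^x\star\PP^\xi$ and $\PP^0\star\PP^{x+\xi}$ coincide and \eqref{Eq:decompose} follows. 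I would prove the multi-time formula by induction on $n$, peeling off the last time: conditioning on $\cF_{t_{n-1}}$ and using the Markov property together with time--homogeneity gives $\Excond{x}{e^{\scal{u_n}{X_{t_n}}}}{\cF_{t_{n-1}}}=\Ex{X_{t_{n-1}}}{e^{\scal{u_n}{X_{t_n-t_{n-1}}}}}$, which by the affine property equals $\exp\big(\phi(t_n-t_{n-1},u_n)+\scal{X_{t_{n-1}}}{\psi(t_n-t_{n-1},u_n)}\big)$. Hence the $n$--time expectation equals $e^{\phi(t_n-t_{n-1},u_n)}$ times an $(n-1)$--time expectation whose last exponent is $u_{n-1}+\psi(t_n-t_{n-1},u_n)$, and the induction hypothesis applies with $c_n=\phi(t_n-t_{n-1},u_n)+c_{n-1}$ and $v_n=v_{n-1}$; the base case $n=1$ is the affine property itself.

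The main obstacle is to make the induction legitimate: every exponent produced along the way must lie in $\cU$, since only then is the affine property of Definition~\ref{Def:affine_process} available. Two observations settle this. First, $\cU\cap\RR^d=\{y\in\RR^d:\sup_{z\in D}\scal{z}{y}<\infty\}$ is the effective domain of the support function of $D$, hence a convex cone; consequently $\cU=\{u\in\CC^d:\Re u\in\cU\cap\RR^d\}$ is stable under addition and under adding purely imaginary vectors. Second, $\psi(t,\cU)\subseteq\cU$: for $v\in\cU$ the transition kernels $p_t(x,\cdot)$ are carried by $D$, so $\big|\Ex{x}{e^{\scal{v}{X_t}}}\big|\le\int_D e^{\scal{\Re v}{z}}\,p_t(x,dz)\le\sup_{z\in D}e^{\scal{\Re v}{z}}=:M<\infty$; the affine property then forces $\Re\phi(t,v)+\scal{x}{\Re\psi(t,v)}\le\log M$ for all $x\in D$, so $x\mapsto\scal{x}{\Re\psi(t,v)}$ is bounded above on $D$, i.e. $\Re\psi(t,v)\in\cU\cap\RR^d$. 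Combining the two facts, the successive exponents $u_{n-1}+\psi(t_n-t_{n-1},u_n)$, then $u_{n-2}+\psi(\cdot,\cdot)$, and so on, all remain in $\cU$ (the $u_j$ being purely imaginary, or more generally in $\cU$), so each application of the affine property in the induction is valid and $v_n\in\cU$, as required.
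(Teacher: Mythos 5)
Your proof is correct and follows essentially the same route as the paper: both reduce \eqref{Eq:decompose} to an identity between finite-dimensional characteristic functions of the canonical process and establish it by an iterated (multi-time) application of the affine property, showing that the joint characteristic function has the form $\exp\bigl(p(\mathbf{u}) + \scal{x}{q(\mathbf{u})}\bigr)$ with $p,q$ independent of $x$. The only substantive addition on your part is the explicit verification that $\psi(t,\cU)\subseteq\cU$ and that $\cU\cap\RR^d$ is a convex cone (so that the intermediate exponents $q_k+u^k$ stay in $\cU$), a point the paper's proof passes over silently when applying the affine property recursively.
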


\begin{pf}
Write $\mathbf{u} = (u^1, \ldots, u^N)$ for an ordered set of points
$u^k \in\cU$. Choosing some finite sequence $0 \le t_1 \le\cdots
\le t_N$ in $\mathbb{R}_{\geq0}$, define
\[
f(x,\mathbf{u}) = \mathbb{E}^{x}\Biggl[\exp\Biggl(\sum
_{k=1}^N \bigl\langle {X_{t_k}},{u^k}
\bigr\rangle\Biggr)\Biggr]\qquad \bigl(x \in D, \bigl(u^1, \ldots,
u^N\bigr) \in\cU^N\bigr),
\]
that is, $f(x,\mathbf{u})$ is the joint characteristic function of
$(X_{t_1}, \ldots, X_{t_n})$ under $\PP^x$. Applying the affine
property \eqref{Eq:affine_property} recursively, we obtain
%
%
\begin{equation}
\label{Eq:f_function} f(x,\mathbf{u}) = \exp\bigl(p(\mathbf{u}) + \bigl\langle{x},{q(
\mathbf{u})}\bigr\rangle\bigr),
\end{equation}
where $p(\mathbf{u}) = p_1$ and $q(\mathbf{u}) = q_1$, with
%
%
\begin{eqnarray}
p_{k-1} &= &\phi\bigl(t_k - t_{k-1},q_k
+ u^k\bigr) + p_k,\qquad p_N = 0,
\\
q_{k-1} &= &\psi\bigl(t_k - t_{k-1},q_k
+ u^k\bigr),\qquad q_N = 0.
\end{eqnarray}
From \eqref{Eq:f_function} we derive that
\[
f(x, \mathbf{u}) f(\xi, \mathbf{u}) = f(0,\mathbf{u}) f(x + \xi,\mathbf{u})
\]
for all $\mathbf{u} = (u^1, \ldots, u^N) \in\cU^N$. Since the
distribution of a stochastic process is determined by its
finite-dimensional marginal distributions, this equality is
equivalent to \eqref{Eq:decompose}.
\end{pf}

In the following, we set
\[
g(t,y,x) = \mathbb{E}^{x}\bigl[e^{\langle{y},{X_t}\rangle}\bigr] = \int
_D{e^{\langle{y},{\xi }\rangle}p_t(x,d\xi)},
\]
for all $(t,y) \in\mathbb{R}_{\geq0}\times\RR^d$ and $x \in D$.
Note that
$g(t,y,x)$ is always strictly positive, but might take the value
$+\infty$. By approximating $g(t,y,x)$ monotonically from below by
bounded functions and using the Chapman--Kolmogorov equation, we derive that
%
%
\begin{equation}
\label{Eq:Markov_unbounded} g(t+s,y,x) = \int_D g(t,y,\xi)
p_s(x,d\xi)
\end{equation}
holds for all $t,s \in\mathbb{R}_{\geq0}$, $y \in\RR^d$ and $x \in
D$, where
$+\infty$ is allowed on both sides and in the integrand. The
following lemma concerns the role of the starting value $X_0 = x$ of
the affine process with regards to finiteness of exponential
moments.

%
\begin{lem}\label{Lem:x_reduction}Let $X$ be an affine process on $D$,
and let $(T,y) \in\mathbb{R}_{\geq0}\times\RR^d$. Then the
following holds:
\begin{longlist}[(a)]
\item[(a)]$\mathbb{E}^{0}[e^{\langle{y},{X_T}\rangle}] = \infty$
implies $\mathbb{E}^{x}[e^{\langle{y},{X_T}\rangle}] =
\infty$ for all $x \in D^\circ$;
\item[(b)]$\mathbb{E}^{x}[e^{\langle{y},{X_T}\rangle}] < \infty$ for
some $x \in D^\circ$
implies $\mathbb{E}^{x}[e^{\langle{y},{X_T}\rangle}] < \infty$ for
all $x \in
D$;
\item[(c)]$\mathbb{E}^{x}[e^{\langle{y},{X_T}\rangle}] < \infty$ for
all $x \in D$ implies
$\mathbb{E}^{x}[e^{\langle{y},{X_t}\rangle}] < \infty$ for all $t
\in[0,T], x \in
D$.
\end{longlist}
\end{lem}

\begin{pf}As before we set $g(t,y,x) = \mathbb{E}^{x}[e^{\langle
{y},{X_t}\rangle}]$,
which takes values in the extended
positive half-line $(0,\infty]$. Using the decomposability property of
$X$ (cf.
Proposition~\ref{Prop:decompose}), we have
\[
g(t,y,x)g(t,y,\xi)=g(t,y,0)g(t,y,x+\xi)
\]
for all $x,\xi\in D$ for which $x+\xi\in D$. Let $x_*$ be an arbitrary
point in $D$.
Setting $x = \xi= x_*/2$ it follows that
%
%
\begin{equation}
\label{eq:functional_eq} g \biggl(t,y,\frac{x_*}{2}\biggr)^2=g(t,y,0)g(t,y,x_*).
\end{equation}
We conclude that $g(t,y,0) = \infty$ implies $g(t,y,\frac{x_*}{2}) =
\infty$; hence (a) is verified for the point $x =
x_*/2$. We introduce the affine process
$X_{2,t}:=X_t-x_*/2$ with state-space $D_2:=D - x_*/2$. Clearly $0\in
D_2$ and $g_2(t,y,z):=\mathbb{E}[\exp(\langle{y},{X_{2,t}}\rangle
)|X_{2,0}=z] = e^{-\langle{y},{x_*}\rangle/2} g(t,y,z+x_*/2)$.
Since $g_2(t,y,0)=\infty$ and $x_*/2\in D_2$, we may apply the same
argument as above to $g_2$ instead of $g$ and obtain that
$g_2(t,y,x_*/4)=\infty$. But this means $g(t,y,3x_*/4)=\infty$, and by
iterating this procedure, we obtain that for each $k\geq1$
$g(t,y,x_*(1-2^{-k}))=\infty$. Since $x_*$ was an arbitrary point in
the convex set $D$, (a) follows.
We prove (b) by contraposition: Assume that
$g(t,y,x) = \infty$ for some $x \in D$, and introduce the affine process
$Z_t:=X_t-x$ with state-space $D_x:=D -x$. Clearly $0\in D_x$ and
$\mathbb{E}[e^{\langle{y},{Z_t}\rangle}|Z_0=0]=e^{\langle
{y},{x}\rangle} g(t,y,x) = \infty$,
by assumption. Applying (a) to the process $Z$
yields that
\[
g(t,y,\xi) = e^{-\langle{y},{\xi}\rangle} \mathbb{E}\bigl[e^{\langle
{y},{Z_t}\rangle}|Z_0=
\xi-x\bigr] = \infty
\]
for all $\xi\in D$, which completes the proof of (b).


To show (c) pick an arbitrary $x \in
D^\circ$ and $\varepsilon> 0$. Since $X$ has c\`adl\`ag paths, we can
find $\delta> 0$ such that $\PP^{x}(\Vert X_t - x\Vert < \varepsilon)
\ge
\tfrac{1}{2}$ for all $t
\le\delta$. With $p_t(x,d\xi)$ denoting the transition kernel of
$X$, we can rewrite this as $p_t(x,B_\varepsilon(x)) \ge\tfrac{1}{2}$ for
all $t \le\delta$.
We show assertion (c) for $t \in[T -\delta,
T]$; the general case follows then by iteration. By \eqref
{Eq:Markov_unbounded},
\[
g(T,y,x) = \int_D g(t,y,\xi) p_{T-t}(x,d\xi)
\]
holds for all $(t,y) \in[0,T] \times\RR^d$. By assumption, the
left-hand side is finite, and we want to show that also $g(t,y,\xi)$ is
finite for all $\xi\in D$ and $t \in[T -\delta,
T]$. Assume for a contradiction that
$g(t,y,\xi^*) = \infty$ for some $t \in[T-\delta,T]$ and $\xi^*
\in D$.
Then by Lemma~\ref{Lem:x_reduction}(b)
$g(t,y,\xi) = \infty$ for all $\xi\in D^\circ$. But
$p_{T-t}(x,D^\circ) \ge p_{T-t}(x,B_\varepsilon(x)) \ge\tfrac{1}{2}$,
and we conclude that $g(T,y,x) = \infty$, which is a contradiction.
\end{pf}

\subsection{From moments to Riccati equations}
In this section we prove
Theorem~\ref{thmm:main_results}(a), except for the
minimality property of the Riccati solution.

%
\begin{lem}\label{Lem:proto_moment} Let $X$ be an affine process on
$D$, and let $T \ge0$. Suppose that for some
$x \in D^\circ$ and $y \in\RR^d$, it holds that
$\mathbb{E}^{x}[e^{\langle{y},{X_T}\rangle}] < \infty$. Then $y \in
\cY$, and the
following holds:
\begin{longlist}[(a)]
\item[(a)]
There exist functions $t \mapsto
p(t,y) \in\RR$ and $t \mapsto
q(t,y) \in\RR^d$ such that \eqref{eq: transform formula} holds for
all $x\in D, t\in[0,T]$.
\item[(b)]
$\mathbb{E}^{x}[e^{\langle
{q(T-t,y)},{X_t}\rangle}] <
\infty$ for all $t \in[0,T]$ and
%
%
\begin{equation}
\label{Eq:proto_moment_cond_real}\qquad \mathbb{E}^{x}\bigl[e^{\langle{y},{X_T}\rangle}|\cF_t
\bigr] = \exp\bigl(p(T-t,y) + \bigl\langle{q(T-t,y)},{X_s}\bigr
\rangle \bigr) \qquad\mbox{for all $x \in D$.}
\end{equation}
\item[(c)]
The functions $p(t,y), q(t,y)$ satisfy
the semi-flow equations
\begin{subequations}\label{Eq:semiflow}
%
%
\begin{eqnarray}
p(T,y) &=& p(T-t,y) + p\bigl(t,q(T-t,y)\bigr),\qquad p(0,y) = 0,
\\
q(T,y) &=& q\bigl(t,q(T-t,y)\bigr), \qquad q(0,y) = y,
\end{eqnarray}
\end{subequations}
for all $t \in[0,T]$.
\end{longlist}
\end{lem}

\begin{pf}
From Lemma~\ref{Lem:x_reduction}(c) it follows
that $\mathbb{E}^{x}[e^{\langle{y},{X_t}\rangle}] < \infty$ for all
$(t,x) \in[0,T]
\times D$. Fix $t \in[0,T]$, and write $g(t,y,x) =
\mathbb{E}^{x}[e^{\langle{y},{X_t}\rangle}]$. Then by
Proposition~\ref{Prop:decompose}
$g(t,y,x)$ satisfies the functional equation \eqref{Eq:decompose}.
Since\vspace*{1pt} $g(t,y,0) > 0$ there exists $p(t,y) \in\RR$ such that
$g(t,y,0) = e^{p(t,y)}$. Set $h(t,y,x) = e^{-p(t,y)}g(t,\break y,x)$. Then
$h(t,y,x)$ is finite for all $x \in D$ and satisfies Cauchy's
functional equation
\[
h(t,y,x)h(t,y,\xi) = h(t,y,x + \xi),\qquad x, \xi, x + \xi\in D.
\]
We conclude that there exists $q(t,y) \in\RR^d$ such that $h(t,y,x)
= e^{\langle{q(t,y)},{x}\rangle}$ for all $x \in D$, and we have shown
\eqref{eq: transform formula}.

To show equation~\eqref{Eq:proto_moment_cond_real} note that by the
Markov property of $X$,
\[
\mathbb{E}^{x}\bigl[\mathbf{1}_{\{|X_t| \le n\}} e^{\langle
{y},{X_t}\rangle}|
\cF_s\bigr] = \int_{\{\xi \in D\dvtx |\xi| \le n\}} e^{\langle{y},{\xi}\rangle}
p_{t-s}(X_s,d\xi)
\]
holds for all $n \in\NN, x \in D, 0 \le s \le t$. Using dominated
convergence we may take the limit $n \to\infty$ and obtain equation
\eqref{Eq:proto_moment_cond_real} from \eqref{eq: transform
formula}. Taking (unconditional) expectations in \eqref
{Eq:proto_moment_cond_real} yields
\begin{eqnarray*}
&&\exp\bigl(p(T,y) +\bigl\langle{x},{q(T,y)}\bigr\rangle\bigr)
\\
&&\qquad = \exp\bigl(p(T-t,y) + p\bigl(t,q(T-t,x)\bigr) + \bigl\langle{x},{q
\bigl(t,q(T-t,y)\bigr)}\bigr\rangle\bigr),
\end{eqnarray*}
for all $t \ge0$ and $x \in D$. Since $D$ contains $0$ and linearly
spans $\RR^d$, the semi-flow equations \eqref{Eq:semiflow} follow.
\end{pf}

Note that if $t \mapsto p(t,y)$ and $t \mapsto q(t,y)$ are
differentiable with derivatives $F(y)$ and $R(y)$ at zero, then it
follows by differentiating the semi-flow
equations~\eqref{Eq:semiflow} that $(p,q)$ is a solution of the
extended Riccati system \eqref{Eq:Eric}. The main difficulty thus is
showing the differentiability of $p$ and $q$. This is very similar
to the \emph{regularity problem} for affine processes, where the
same question is asked regarding the functions $\phi(t,u)$ and
$\psi(t,u)$ in Definition~\ref{Def:affine_process}. Several
solutions of the regularity problem have been given; see, for example,
\citet{KST2009} and \citet{KST2011}.
Here we adapt the approach of \citet{Cuchiero2011} to our setting.

We enlarge the probability space $(\Omega, \cF, \FF, \PP)$ such that
it supports $d + 1$ independent copies of the affine process $X$,
which we denote by $X^0, \ldots, X^d$. Without loss of generality it
can be assumed that $X = X^0$. In what follows we will use the
convention that upper indices correspond to the different instances of
the process~$X$, while lower indices correspond to the coordinate
projections of a single process. For a vector $x^i \in D$ denote by
$\PP^{x^i}$ the probability $\PP$ conditional on
$\{X^i(0) = x^i\}$; that is, the process $X^i$ starts at the point
$x^i$ with $\PP^{x^i}$-probability $1$. Similarly for an ordered set
$\mathbf{x} = (x^0, \ldots, x^d)$ of points in $D$, we denote by
$\PP^{\mathbf{x}}$ the probability $\PP$ conditional on $\{(X^0(0) =
x^0) \wedge\cdots\wedge(X^d(0) = x^d)\}$; that is, the processes $X^0,
\ldots, X^d$ start at the points $x^0, \ldots, x^d$ respectively with
$\PP^{\mathbf{x}}$-probability~$1$.

%
\begin{lem}\label{Lem:invertible}
Let $X^0, \ldots, X^d$ be $d+1$ independent copies of the affine
process~$X$. Furthermore let $\mathbf{x} = (x^0, \ldots, x^d)$ be
$d+1$ affinely independent points in $D$. Define the matrix-valued
random function
\[
\Xi(t;\mathbf{x},\omega) = \pmatrix{ 1 &
X_1^0(t,\omega) & \cdots& X_d^0(t,
\omega)
\vspace*{2pt}\cr
\vdots& \vdots& \ddots& \vdots
\vspace*{2pt}\cr
1 & X_1^d(t,\omega) & \cdots& X_d^d(t,
\omega) }.
\]
Then there exists $\delta> 0$ such that
\[
\PP^{\mathbf{x}} \bigl(\det\Xi(t;\mathbf{x}) \neq0 \mbox{ for all } 0 \le t \le
\delta\bigr) > \tfrac{1}{2};
\]
that is, $t \mapsto\Xi(t;\mathbf{x})$ stays regular on $[0,\delta]$
with $\PP^{\mathbf{x}}$-probability at least $\frac{1}{2}$.
\end{lem}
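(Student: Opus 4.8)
The plan is to exploit that the non-degeneracy of $\Xi$ holds \emph{at time $0$} for purely linear-algebraic reasons, and then to let it propagate forward for a short time using only right-continuity of the paths. Under $\PP^{\mathbf{x}}$ the process $X^i$ starts deterministically at $x^i$, so $\Xi(0;\mathbf{x})$ is the deterministic matrix whose $i$-th row is $(1,x^i)$. This matrix is singular if and only if the points $x^0,\dots,x^d$ are contained in a common affine hyperplane of $\RR^d$, i.e. are affinely dependent; by hypothesis they are not, so $c := \det\Xi(0;\mathbf{x}) \neq 0$. Thus the assertion is precisely that this non-degeneracy survives for a short time with probability exceeding $\tfrac12$.

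First I would observe that $t \mapsto \det\Xi(t;\mathbf{x})$ is a real-valued cadlag process, being a polynomial --- hence continuous --- function of the cadlag processes $X^0,\dots,X^d$. In particular it is right-continuous at $0$, so $\PP^{\mathbf{x}}$-almost surely $\det\Xi(t;\mathbf{x}) \to c \neq 0$ as $t \downarrow 0$. Consequently the random time $\tau := \inf\set{t \ge 0 : \det\Xi(t;\mathbf{x}) = 0}$ is strictly positive $\PP^{\mathbf{x}}$-almost surely: a right-continuous real path that equals $c \neq 0$ at time $0$ stays bounded away from $0$ on some (random) initial interval $[0,\varepsilon)$. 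Since $\FF$ is right-continuous and $\PP^{\mathbf{x}}$-complete, $\tau$ is moreover a stopping time by the d\'ebut theorem, being the first hitting time of the closed set $\set{0}$ by the cadlag process $\det\Xi(\cdot;\mathbf{x})$.

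To conclude, I would note that the events $\set{\tau > \delta}$ increase, as $\delta \downarrow 0$, to $\set{\tau > 0}$, which has $\PP^{\mathbf{x}}$-probability $1$; hence by continuity of measure there is a $\delta > 0$ with $\PP^{\mathbf{x}}(\tau > \delta) > \tfrac12$. Any zero of $\det\Xi(\cdot;\mathbf{x})$ lying in $[0,\delta]$ would force $\tau \le \delta$, so on the event $\set{\tau > \delta}$ one has $\det\Xi(t;\mathbf{x}) \neq 0$ for all $t \in [0,\delta]$. Therefore the event in the statement contains $\set{\tau > \delta}$ and has $\PP^{\mathbf{x}}$-probability at least $\PP^{\mathbf{x}}(\tau > \delta) > \tfrac12$, which is the claim.

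I do not anticipate a genuine obstacle here; the only point needing care is measurability --- of $\tau$ and of the event displayed in the statement --- and this is the routine consequence of dealing with cadlag processes adapted to a right-continuous, complete filtration (the d\'ebut theorem), which could also be checked directly from right-continuity of the paths via a countable approximation. The essential content is the elementary observation of the first paragraph combined with path regularity.
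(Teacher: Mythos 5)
Your argument is correct and in substance identical to the paper's. The paper also starts from the observation that $\Xi(0;\mathbf{x})$ is deterministic and regular because the $x^i$ are affinely independent, and then exploits right-continuity of $t \mapsto \det\Xi(t;\mathbf{x})$ to pull the non-degeneracy forward; the only cosmetic difference is that the paper works with the running infimum $a_t = \inf_{s\in[0,t]}|\det\Xi(s;\mathbf{x})|$ and the right-continuous function $b_t = \PP^{\mathbf{x}}(a_t>0)$ with $b_0 = 1$, whereas you phrase the same limit argument through the hitting time $\tau$ and continuity of measure along $\{\tau>\delta\}\uparrow\{\tau>0\}$. As you yourself note, the appeal to the d\'ebut theorem is heavier than necessary — measurability of the event is elementary from the cadlag property — but this does not affect correctness.
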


\begin{pf}
Since $(x^0, \ldots,x^d)$ are affinely independent, and $X^i(0) =
x^i$ for all $i \in\{0, \ldots, d\}$ with
$\PP^{\mathbf{x}}$-probability one, the matrix $\Xi(0;\mathbf{x})$
is regular $\PP^\mathbf{x}$-almost surely. Define $a_t = \inf_{s \in
[0,t]} | \det\Xi(s;\mathbf{x})|$. Since the processes
$X^i$ are right-continuous, also $\det\Xi(s;\mathbf{x})$ is, and
hence even $a_t$. By dominated convergence also $b_t =
\PP^{\mathbf{x}}(a_t > 0)$ is right-continuous and has the starting
value $b_0 = 1$. We conclude that there exists some $\delta> 0$
such that $b_\delta> \frac{1}{2}$, which completes the proof.
\end{pf}

The following proposition settles
Theorem~\ref{thmm:main_results}(a) apart from the
minimality property of $(p,q)$ as solutions of the extended Riccati
system. The key ideas in the subsequent proof come from
\citet{Cuchiero2011}, proofs of Lemma~1.5.3, Theorem~1.5.4.

%
\begin{prop}\label{part of main theorem part a}
Let $X$ be an affine process on $D$, and let $T \ge0$. Let $y \in
\RR^d$, and suppose that $\mathbb{E}^{x}[e^{\langle{y},{X_T}\rangle
}] < \infty$ for
some $x \in D^\circ$. Then $y \in\cY$ and there exist a solution
$(p,q)$ up to time $T$ of the extended Riccati system
\eqref{Eq:Eric}, such that \eqref{eq: transform formula} holds for
all $x \in D$, $t \in[0,T]$.
\end{prop}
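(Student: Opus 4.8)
The plan is to show differentiability of the functions $p(t,y)$ and $q(t,y)$ produced in Lemma~\ref{Lem:proto_moment}, from which the extended Riccati equations follow by differentiating the semi-flow equations \eqref{Eq:semiflow}. We already know from Lemma~\ref{Lem:proto_moment} that $y \in \cY$, that \eqref{eq: transform formula} holds for all $x \in D$, $t \in [0,T]$, and that the conditional version \eqref{Eq:proto_moment_cond_real} holds, so that $M_t := \Ex{x}{e^{\scal{y}{X_T}}\mid\cF_t} = \exp(p(T-t,y) + \scal{q(T-t,y)}{X_t})$ is a (true) martingale on $[0,T]$ for each starting point $x\in D$. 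The idea, following \citet{Cuchiero2011}, is to exploit the semimartingale structure from Theorem~\ref{Thm:semimartingale} together with the martingale property of $M$: writing $M_t = \exp(\Psi(t))$ with $\Psi(t) = p(T-t,y) + \scal{q(T-t,y)}{X_t}$, one applies It\^o's formula formally and uses that the finite-variation (drift) part of $M$ must vanish. This will give, on a set of positive probability, a relation of the form $\dot{p}(T-t,y) + \scal{\dot q(T-t,y)}{X_{t-}} = -\cR_{X_{t-}}(q(T-t,y))$ once one knows the left-derivatives exist — but of course that is exactly what we are trying to prove, so the argument must be set up more carefully.

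The rigorous route is to integrate the semi-flow relation against the path of the process and invert. Concretely, I would work on the enlarged space carrying $d+1$ independent copies $X^0,\dots,X^d$ of $X$ started from affinely independent points $\mathbf{x} = (x^0,\dots,x^d)$ in $D^\circ$, as set up before Lemma~\ref{Lem:invertible}. For each copy, \eqref{eq: transform formula} and the semi-flow equations give, for $0 \le s \le t \le T$,
\[
p(T-s,y) + \scal{q(T-s,y)}{X^i_s} = \log \Ex{X^i_s}{e^{\scal{y}{X_T}}} = \log \Ex{\mathbf{x}}{e^{\scal{y}{X^i_T}}\mid \cF_s}.
\]
Stacking these $d+1$ identities and using the matrix $\Xi(s;\mathbf{x})$ from Lemma~\ref{Lem:invertible}, we can solve for the vector $\big(p(T-s,y),q_1(T-s,y),\dots,q_d(T-s,y)\big)^\top = \Xi(s;\mathbf{x})^{-1} \big(\log M^0_s,\dots,\log M^d_s\big)^\top$ on the event $\{\det\Xi(s;\mathbf{x})\neq 0 \text{ for all } s\le\delta\}$, which by Lemma~\ref{Lem:invertible} has probability $>\tfrac12$. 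Now $M^i_s$ is a positive martingale, hence a semimartingale, so $\log M^i_s$ is a semimartingale; and $\Xi(s;\mathbf{x})^{-1}$ is a semimartingale on the event where it stays invertible (by It\^o's formula applied to matrix inversion, since the entries of $\Xi$ are the semimartingales $X^i_j$). Therefore $s \mapsto (p(T-s,y),q(T-s,y))$ agrees, on this event of positive probability, with a semimartingale. But $(p,q)$ are \emph{deterministic} functions of $s$; a deterministic process that is (on a set of positive probability) equal to a semimartingale is of finite variation. Thus $s\mapsto p(T-s,y)$ and $s\mapsto q(T-s,y)$ are of finite variation, hence differentiable almost everywhere, on $[T-\delta,T]$, and then on all of $[0,T]$ by iterating the semi-flow relation.

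Once we know $p(\cdot,y)$ and $q(\cdot,y)$ are of finite variation (equivalently absolutely continuous, after a further argument), the semi-flow equations \eqref{Eq:semiflow} upgrade this to genuine $C^1$-regularity: differentiating $p(T,y) = p(T-t,y) + p(t,q(T-t,y))$ in $t$ at $t=0$ gives $\dot p(T,y) = -\dot p(T,y) + \tfrac{d}{dt}\big|_{t=0} p(t,q(T,y))$, i.e. the right derivative of $r\mapsto p(r,q(T,y))$ at $r=0$ exists and equals $2\dot p(T,y)$ wherever $\dot p(T,y)$ exists; a standard bootstrapping (using that the semi-flow is continuous and the right-hand side depends continuously on the endpoint) shows the derivatives $F(y) := \partial_t p(t,y)|_{t=0}$ and $R(y):= \partial_t q(t,y)|_{t=0}$ exist for \emph{all} $t$, are continuous, and satisfy \eqref{Eq:Eric1}--\eqref{Eq:Eric2}. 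That these limiting derivatives are precisely the $F$ and $R$ of Proposition~\ref{Prop:FR} follows by expanding $\Ex{x}{e^{\scal{y}{X_t}}} = 1 + t\,\cR_x(y) + o(t)$ as $t\to 0$, which in turn uses the semimartingale characteristics \eqref{Eq:semimartingale_char} and dominated convergence (legitimate because $y\in\cY$ guarantees the relevant exponential integrability near $t=0$); matching with $\exp(p(t,y)+\scal{q(t,y)}{x}) = 1 + p(t,y) + \scal{q(t,y)}{x} + o(t)$ and using again that $D$ contains $0$ and $d$ linearly independent points identifies $F(y)+\scal{R(y)}{x} = \cR_x(y)$. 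The main obstacle is the first step — establishing that the deterministic curve $(p,q)$ is of finite variation — because it requires carefully combining the martingale property of $M^i$, the semimartingale structure of the $X^i$, and the invertibility of $\Xi$ on a positive-probability event, and in particular justifying the application of It\^o's formula to $\log M^i$ and to $\Xi^{-1}$ (the latter only on the stochastic interval before $\det\Xi$ hits zero). The integrability needed to take expectations and to pass limits is available precisely because we have already reduced, via Lemma~\ref{Lem:x_reduction}, to the situation where $\Ex{x}{e^{\scal{y}{X_t}}}<\infty$ for all $t\in[0,T]$ and all $x\in D$.
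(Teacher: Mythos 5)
Your proposal follows the paper's strategy faithfully through the crucial first half: working on the enlarged probability space with $d+1$ independent copies $X^0,\dotsc,X^d$, invoking Lemma~\ref{Lem:proto_moment} to get the martingales $M^i_t = \exp(p(T-t,y) + \scal{q(T-t,y)}{X^i_t})$, stacking the logarithms in matrix form, inverting $\Xi(t;\mathbf{x})$ on the positive-probability event from Lemma~\ref{Lem:invertible}, and concluding that the deterministic curves $t \mapsto p(T-t,y)$ and $t \mapsto q(T-t,y)$ are of finite variation. This is exactly the paper's argument, and your remark that $\Xi^{-1}$ is a semimartingale by It\^o's formula for matrix inversion makes explicit a step the paper leaves implicit.

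Where you diverge is in passing from finite variation to the Riccati system, and here there is a genuine gap. You propose to differentiate the semi-flow equations \eqref{Eq:semiflow} in $t$ at $t=0$, and then to identify the abstract derivatives $\partial_t p(0,\cdot)$, $\partial_t q(0,\cdot)$ with the $F$, $R$ of Proposition~\ref{Prop:FR} by a first-order expansion $\Ex{x}{e^{\scal{y}{X_t}}} = 1 + t\,\cR_x(y) + o(t)$. Neither step is justified by what you have established. Differentiating the composition $t\mapsto p\bigl(t,q(T-t,y)\bigr)$ via the chain rule requires regularity of $p$ jointly in $(t,y)$ (or at least in $y$ near $q(T,y)$ at $t=0$), whereas the finite-variation argument only delivers a.e.\ $t$-derivatives at fixed $y$. (There is also a sign slip: the left-hand side of the semi-flow identity is constant in $t$, so its $t$-derivative is $0$, not $\dot p(T,y)$, and the correct conclusion is $\dot p(T,y) = \partial_t p(0,q(T,y))$, not the factor-of-two relation you wrote.) More seriously, the expansion $\Ex{x}{e^{\scal{y}{X_t}}} = 1 + t\,\cR_x(y) + o(t)$ amounts to computing the generator on the \emph{unbounded} test function $\xi\mapsto e^{\scal{y}{\xi}}$, and making that rigorous is essentially the regularity problem you are trying to solve; calling it a ``standard bootstrapping'' does not discharge it.

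The paper handles both difficulties simultaneously by applying It\^o's formula directly to the martingales $M^{i,y}$. Once $p,q$ are of finite variation, $M^{i,y}_t = \exp\bigl(p(T'-t,y) + \scal{q(T'-t,y)}{X^i_t}\bigr)$ is a legitimate It\^o transform of a finite-variation curve and a semimartingale, and the martingale property forces the finite-variation ($ds$-) terms to vanish. This produces, for almost every $t$,
\[
-dp(T'-t,y) - \scal{dq(T'-t,y)}{X^i_{t-}} = \bigl(F(q(T'-t,y)) + \scal{R(q(T'-t,y))}{X^i_{t-}}\bigr)\,dt,
\]
where $F$ and $R$ are automatically the L\'evy--Khintchine functions of Proposition~\ref{Prop:FR} because they arise as the compensator produced by It\^o's formula on $X^i$. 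A second inversion of $\Xi$ decouples these into the integral form \eqref{Eq:proto_riccati_2}, and the fundamental theorem of calculus then gives $C^1$-regularity. In short: the It\^o route identifies $F,R$ and establishes differentiability in one stroke, whereas your route separates these tasks and leaves the identification unproven. To repair your proposal you would essentially have to re-derive what the It\^o computation gives for free.
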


\begin{pf}
Recall that we are working on an extended probability space that
supports $d + 1$ independent copies $(X^0, \ldots, X^d)$ of $X$. Let
$\mathbf{x} = (x^0, \ldots, x^d)$ be $d + 1$ affinely independent
points in $D$. By Theorem~\ref{Thm:semimartingale} each $X^i$ is a
$\PP^{\mathbf{x}}$-semi-martingale with canonical semimartingale
representation
%
%
\begin{equation}
\label{Eq:semi_rep} X_t^i = x^i + \int
_0^t{b\bigl(X^i_{s-}
\bigr)\,ds} + N_t^i + \int_0^t
\int_{\RR^d} \bigl(\xi- h(\xi)\bigr) J^i(
\omega;ds,d\xi),
\end{equation}
where $N_t^i$ is a local martingale and $J^i(\omega;dt,d\xi)$ is the
Poisson random measure associated to the jumps of $X^i$ with
predictable compensator $K(X^i_{t-},d\xi)\,dx$.

By Lemma~\ref{Lem:proto_moment} we know that
%
%
\begin{equation}
\label{Eq:M_martingale} \mathbb{E}^{\mathbf{x}}\bigl[e^{\langle{y},{X^i_T}\rangle}|\cF_t
\bigr] = \exp\bigl(p(T-t,y) + \bigl\langle{q(T-t,y)},{X_t^i}
\bigr\rangle\bigr)
\end{equation}
for each $i \in\{0,\ldots,d\}, t \in[0,T]$. Let us denote $M_t^i
= \mathbb{E}^{\mathbf{x}}[e^{\langle{y},{X^i_T}\rangle}|\cF_t]$.
Clearly, each $t
\mapsto M_t^i$ is a $\PP^{\mathbf{x}}$-martingale for $t \le T$ and
for each $i \in\{0, \ldots, d\}$. Taking logarithms and arranging
the equations in matrix form, we get
%
%
\begin{equation}
\pmatrix{ \log M_t^0(\omega)
\vspace*{2pt}\cr
\vdots
\vspace*{2pt}\cr
\log M_t^d(\omega) } =
\pmatrix{ 1 & X_1^0(t,
\omega) & \cdots& X_d^0(t,\omega)
\vspace*{2pt}\cr
\vdots& \vdots& \ddots& \vdots
\vspace*{2pt}\cr
1 & X_1^d(t,\omega) & \cdots& X_d^d(t,
\omega)} \cdot \pmatrix{
p(T-t,y)
\vspace*{2pt}\cr
q_1(T-t,y)
\vspace*{2pt}\cr
\vdots
\vspace*{2pt}\cr
q_d(T-t,y)},\hspace*{-30pt}
\end{equation}
and recognize on the right-hand side the matrix $\Xi(t;\mathbf
{x},\omega
)$ from Lemma~\ref{Lem:invertible}. The latter allows us to conclude
that there exists a set $A \subset\Omega$ with $\PP^\mathbf{x}(A) >
\frac{1}{2}$ and some $\delta> 0$
such that $\Xi(t;\mathbf{x},\omega)$ is invertible for all $t \in
[0,\delta]$ and $\omega\in A$. Hence for $T' = T \wedge\delta$ we obtain
%
%
\begin{eqnarray}
\label{matrixinv} &&\pmatrix{ 1 & X_1^0(t,
\omega) & \cdots& X_d^0(t,\omega)
\vspace*{2pt}\cr
\vdots& \vdots& \ddots& \vdots
\vspace*{2pt}\cr
1 & X_1^d(t,\omega) & \cdots& X_d^d(t,
\omega) }^{-1} \cdot\pmatrix{\log M_t^0(\omega)
\vspace*{2pt}\cr
\vdots
\vspace*{2pt}\cr
\log M_t^d(\omega)}
\nonumber
\\[-8pt]
\\[-8pt]
\nonumber
&&\qquad=
\pmatrix{ p\bigl(T'-t,y\bigr)
\vspace*{2pt}\cr
q_1\bigl(T'-t,y\bigr)
\vspace*{2pt}\cr
\vdots
\vspace*{2pt}\cr
q_d\bigl(T'-t,y\bigr)},
\end{eqnarray}
for all $t \in[0,T']$. All processes occurring on the left-hand
side of equation~\eqref{matrixinv} are semimartingales, hence also
the right-hand side consists row-by-row of semimartingales for all
$t \in[0,T']$. Since they are deterministic, the functions $t
\mapsto p(t,y)$ and $t \mapsto q(t,y)$ are of finite variation on
$[0,T']$. This implies in particular that they are almost everywhere
differentiable and can be written as
\begin{subequations}\label{Eq:proto_riccati}
%
%
\begin{eqnarray}
p\bigl(T'-t,y\bigr) - p\bigl(T',y\bigr) &=& - \int
_0^t{\,dp\bigl(T'-s,y\bigr)},
\\
q\bigl(T'-t,y\bigr) - q\bigl(T',y\bigr) &= &- \int
_0^t{\,dq\bigl(T'-s,y\bigr)}.
\end{eqnarray}
\end{subequations}
Applying It\^{o}'s formula to the martingales $M_t^{i,y}$, we obtain
\begin{eqnarray*}
M_t^{i,y}& =& M_0^{i,y} + \int
_0^t M_{s-}^{i,y} \bigl(-dp
\bigl(T'-s,y\bigr) + \bigl\langle {-dq\bigl(T'-s,y
\bigr)},{X^i_{s-}}\bigr\rangle \bigr)
\\
&&{}+ \int_0^t M_{s-}^{i,y}
\biggl\{ \bigl\langle{q\bigl(T'-s,y\bigr)},{b\bigl(X^i_{s-}
\bigr)}\bigr\rangle \\
&&\hspace*{56pt}{}+ \frac
{1}{2}\bigl\langle{q\bigl(T'-s,y
\bigr)},{a\bigl(X^i_{s-}\bigr) q\bigl(T'-s,y
\bigr)}\bigr\rangle
\\
&&\hspace*{56pt}{}+ \int_{D} \bigl(e^{\langle{q(T'-s,y)},{\xi}\rangle} - 1 - \bigl\langle {q
\bigl(T'-s,y\bigr)},{h(\xi )}\bigr\rangle\bigr)\\
&&\hspace*{194pt}{}\times K
\bigl(X^i_{s-},d\xi\bigr) \biggr\} \,ds
\\
&&{}+ \int_0^t M_{s-}^{i,y}
\bigl\langle{q\bigl(T'-s,y\bigr)},{dN^i_s}
\bigr\rangle
\\
&&{}+ \int_0^t \int_{D}
M_{s-}^{i,y} \bigl(e^{\langle{q(T'-s,y)},{\xi
}\rangle} - 1 - \bigl\langle{q
\bigl(T'-s,y\bigr)},{h(\xi)}\bigr\rangle\bigr)\\
&&\hspace*{40pt}{}\times \bigl(J(\omega,ds,d
\xi) - K\bigl(X^i_{s-},d\xi\bigr)\,ds\bigr),
\end{eqnarray*}
for all $i \in\{0,\ldots, d\}$. On the right-hand side, the last
two terms are local martingales and the other terms are of finite
variation. Hence the finite variation terms have to sum up to $0$.
Rewriting in terms of the functions $F(y)$ and $R(y)$ this means
that
\begin{eqnarray*}
&&-dp\bigl(T'-t,y\bigr) + \bigl\langle{-dq\bigl(T'-t,y
\bigr)},{X^i_{s-}}\bigr\rangle\\
&&\qquad = F\bigl(q
\bigl(T'-t,y\bigr)\bigr) \, dt + \bigl\langle{X^i_{s-}},{R
\bigl(q\bigl(T'-t,y\bigr)\bigr)}\bigr\rangle \,dt
\end{eqnarray*}
holds for almost all $t \in[0,T']$ $\PP^{\mathbf{x}}$-a.s.
Inserting into \eqref{Eq:proto_riccati} and using the regularity of
the matrix $\Xi(t,\mathbf{x},\omega)$ on $[0,T']$, this yields
%
%
\begin{eqnarray}
\label{Eq:proto_riccati_2} p\bigl(T'-t,y\bigr) - p\bigl(T',y
\bigr) &=& - \int_0^t{F\bigl(q
\bigl(T'-s,y\bigr)\bigr)\,ds},
\\
q\bigl(T'-t,y\bigr) - q\bigl(T',y\bigr) &=& - \int
_0^t{R\bigl(q\bigl(T'-s,y\bigr)
\bigr)\,ds}.
\end{eqnarray}
Applying the fundamental theorem of calculus, we have shown that
$(p,q)$ is a solution to the extended Riccati system \eqref{Eq:Eric} up
to $T' = T \wedge\delta$, where $\delta$ was given by Lemma~\ref
{Lem:invertible}.
To show the general case we conclude with an induction argument.
Suppose that $(p,q)$ are solutions of the extended Riccati system up to
$T_k = T \wedge(k \delta)$. We show that they
can be extended to solutions up to $T_{k+1} = T \wedge((k+1) \delta)$.
Set $\Delta_k = T_{k+1} - T_k$; clearly $\Delta_k \le\delta$. By
Lemma~\ref{Lem:x_reduction},
$\mathbb{E}^{x}[e^{\langle{y},{X_T}\rangle}] < \infty$ implies that
$\mathbb{E}^{x}[e^{\langle{y},{X_{T_{k+1}}}\rangle}] < \infty$, and by
Lemma~\ref{Lem:proto_moment} we have that
$\mathbb{E}^{x}[e^{\langle{q(y,T_k)},{X_{\Delta_k}}\rangle}] <
\infty$. Set $y' =
q(y,T_k)$. Then, proceeding exactly as in the proof above, we obtain
\begin{subequations}
%
%
\begin{eqnarray}
\frac{\partial}{\partial t}p\bigl(t,y'\bigr)& = &F\bigl(q\bigl(t,y'
\bigr)\bigr), \qquad p\bigl(0,y'\bigr) = 0,
\\
\frac{\partial}{\partial t}q\bigl(t,y'\bigr)& =& R\bigl(q\bigl(t,y'
\bigr)\bigr),\qquad  q\bigl(0,y'\bigr) = y'
\end{eqnarray}
\end{subequations}
for all $t \in[0,\Delta_k]$. Using the flow property, this is
equivalent to
\begin{subequations}\label{Eq:Riccati_end}
%
%
\begin{eqnarray}
\frac{\partial}{\partial t}p(t,y) &= &F\bigl(q(t,y)\bigr),\qquad p(0,y) = 0,
\\
\frac{\partial}{\partial t}q(t,y) &=& R\bigl(q(t,y)\bigr), \qquad q(0,y) = y
\end{eqnarray}
\end{subequations}
for all $t \in[T_k,T_{k+1}]$. By the induction hypothesis
\eqref{Eq:Riccati_end} already holds for all $t \in[0,T_k]$, and we
have shown that $(p,q)$ is a solution of the extended Riccati system
up to $T_{k+1} = T \wedge\delta(k+1)$. As this holds true for all
$k \in\NN$, the proof is complete.
\end{pf}

\subsection{From Riccati equations to moments}

Using the result from above, the step from the extended Riccati
system to the existence of moments is simple:

%
\begin{prop}\label{prop thb}
Let $X$ be an affine process taking values in $D$. Let $y \in\cY$,
and suppose that the extended Riccati system \eqref{Eq:Eric} has a
solution $({\widetilde{p}}, {\widetilde{q}})$ that starts at $y$ and
exists up to $T
\ge0$. Then $\mathbb{E}^{x}[e^{\langle{y},{X_T}\rangle}] < \infty$
and \eqref{eq:
transform formula} holds for all $x \in D$, $t \in[0,T]$, where
$(p,q)$ is also a solution up to $T$ to \eqref{Eq:Eric}.
\end{prop}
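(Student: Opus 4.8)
\emph{Proof plan.} The plan is to use the given Riccati solution $(\wt p,\wt q)$ to build a nonnegative local martingale whose terminal value is $e^{\scal{y}{X_T}}$, deduce finiteness of the moment from the supermartingale property of such a process, and then invoke Proposition~\ref{part of main theorem part a} to obtain the transform formula itself. Fix $x\in D$ and, working under $\PP^x$, set
\[
M_t := \exp\!\left(\wt p(T-t,y) + \scal{\wt q(T-t,y)}{X_t}\right), \qquad t\in[0,T],
\]
so that $M_T = e^{\scal{y}{X_T}}$ (using $\wt p(0,y)=0$, $\wt q(0,y)=y$) and $M_0 = \exp\!\left(\wt p(T,y)+\scal{\wt q(T,y)}{x}\right)$. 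Since $(\wt p,\wt q)$ is a $C^1$-solution of \eqref{Eq:Eric}, the map $(t,\xi)\mapsto \wt p(T-t,y)+\scal{\wt q(T-t,y)}{\xi}$ is $C^1$ in $t$ and affine in $\xi$, so Itô's formula applies to $M$ using the affine semimartingale characteristics of $X$ provided by Theorem~\ref{Thm:semimartingale}. Collecting the finite-variation terms, the time-derivative contributes $-F(\wt q(T-t,y))-\scal{R(\wt q(T-t,y))}{X_{t-}} = -\cR_{X_{t-}}(\wt q(T-t,y))$, by \eqref{Eq:Eric} and Proposition~\ref{Prop:FR}, while the drift, the quadratic-variation term and the jump compensator contribute together exactly $+\cR_{X_{t-}}(\wt q(T-t,y))$, by the L\'evy--Khintchine form \eqref{Eq:Rx_def} of $\cR_x$ (after the usual cancellation of the first-order jump terms against the correction $e^{z}-1-z$ in the exponential Itô formula). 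Hence the finite-variation part of $M$ vanishes identically and $M$ is a nonnegative local martingale on $[0,T]$.

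Granting this, the conclusion follows quickly. A nonnegative local martingale is a supermartingale, so
\[
\Ex{x}{e^{\scal{y}{X_T}}} = \Ex{x}{M_T} \le \Ex{x}{M_0} = \exp\!\left(\wt p(T,y)+\scal{\wt q(T,y)}{x}\right) < \infty .
\]
As this holds for every $x\in D$ -- in particular for some $x\in D^\circ$, which is non-empty -- and since $y\in\cY$ by hypothesis, Proposition~\ref{part of main theorem part a} applies and yields a solution $(p,q)$ up to time $T$ of the extended Riccati system \eqref{Eq:Eric} for which \eqref{eq: transform formula} holds for all $x\in D$ and $t\in[0,T]$, which is the assertion. (Note that $(p,q)$ need not coincide with $(\wt p,\wt q)$: the supermartingale estimate above only gives the one-sided bound $\Ex{x}{e^{\scal{y}{X_t}}}\le\exp(\wt p(t,y)+\scal{\wt q(t,y)}{x})$, and the precise value is identified only through the minimal solution.)

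The single step that genuinely requires care is the justification that $M$ is a bona fide local martingale, i.e.\ that the compensated integral
\[
\int_0^{\cdot}\!\!\int_{\RR^d} M_{s-}\left(e^{\scal{\wt q(T-s,y)}{\xi}} - 1 - \scal{\wt q(T-s,y)}{h(\xi)}\right)\bigl(\mu^X(ds,d\xi) - K(X_{s-},d\xi)\,ds\bigr)
\]
is well defined. For $\norm{\xi}$ small the integrand is $O(\norm{\xi}^2)$ and is absorbed by $\int(\norm{\xi}^2\wedge1)K(x,d\xi)<\infty$; for $\norm{\xi}\geq1$ the exponential growth is controlled precisely by $\int_{\norm{\xi}\ge1}e^{\scal{\wt q(T-s,y)}{\xi}}K(X_{s-},d\xi)<\infty$, which is exactly the condition $\wt q(T-s,y)\in\cY$ -- valid for all $s\in[0,T]$ because it is built into the definition of a solution of \eqref{Eq:Eric}. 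Together with the local boundedness of $M_{s-}$ and the boundedness of $s\mapsto\wt q(T-s,y)$ on $[0,T]$, a standard localization argument -- stopping $X$ at its successive exit times from bounded subsets of $D$ -- makes the stochastic integral a genuine local martingale and legitimizes the cancellation of drift terms; everything else is routine.
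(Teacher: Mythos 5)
Your proposal is correct and follows essentially the same route as the paper: define $\wt{M}_t = \exp(\wt p(T-t,y)+\scal{\wt q(T-t,y)}{X_t})$, use It\^o's formula and the Riccati equations to kill the finite-variation part so that $\wt M$ is a nonnegative local martingale hence a supermartingale, deduce $\Ex{x}{e^{\scal{y}{X_T}}}\le \wt M_0 <\infty$, and then invoke Proposition~\ref{part of main theorem part a} for the transform formula. Your additional paragraph justifying that the compensated jump integral is a genuine local martingale (small-jump/large-jump split, using $\wt q(T-s,y)\in\cY$) is a useful elaboration of a step the paper treats implicitly.
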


\begin{pf}
Using the solution $({\widetilde{p}},{\widetilde{q}})$ of the
extended Riccati system
\eqref{Eq:Eric}, define for $t \in[0,T]$,
%
%
\begin{equation}
\label{Eq:M_supermartingale} {\widetilde{M}}^{y}_t = \exp\bigl({
\widetilde{p}}(T-t,y) + \bigl\langle {{\widetilde{q}}(T-t,y)},{X_t}
\bigr\rangle\bigr).
\end{equation}
Applying It\^{o}'s formula to ${\widetilde{M}}^{y}_t$ and using the semimartingale
representation \eqref{Eq:semi_rep}, we see that
\begin{eqnarray*}
{\widetilde{M}}^{y}_t& =& {\widetilde{M}}_0^y
+ \int_0^t {\widetilde {M}}_{s-}^y
\bigl(-{\widetilde{p}}(T-s,y) + \bigl\langle{- {\widetilde{q}}(T-s,y)},{X_{s-}}
\bigr\rangle \bigr) \,ds
\\
&&{}+ \int_0^t {\widetilde{M}}_{s-}^y
\biggl( \bigl\langle{{\widetilde {q}}(T-s,y)},{b(X_{s-})}\bigr\rangle
\\
&&\hspace*{55pt}{}+ \frac
{1}{2}\bigl\langle{{\widetilde{q}}(T-s,y)},{a(X_{s-}) {
\widetilde {q}}(T-s,y)}\bigr\rangle
\\
&&\hspace*{55pt}{}+ \int_{D} \bigl(e^{\langle{{\widetilde{q}}(T-s,y)},{\xi}\rangle} - 1 - \bigl\langle{{
\widetilde{q}}(T-s,y)},{h(\xi)}\bigr\rangle\bigr) K(X_{s-},d\xi)
\biggr) \,ds
\\
&&{}+ \int_0^t {\widetilde{M}}_{s-}^y
\bigl\langle{{\widetilde {q}}(T-s,y)},{dN_s}\bigr\rangle
\\
&&{}+ \int_0^t \int_{D} {
\widetilde{M}}_{s-}^y \bigl(e^{\langle{{\widetilde
{q}}(T-s,y)},{\xi}\rangle} - 1 - \bigl
\langle{{\widetilde{q}}(T-s,y)},{h(\xi)}\bigr\rangle\bigr)\\
&&\hspace*{42pt}{}\times \bigl(J(\omega,ds,d
\xi) - K(X_{s-},d\xi)\,ds\bigr).
\end{eqnarray*}
The $ds$-terms can be simplified to
\begin{eqnarray*}
&&-{\widetilde{p}}(T-s,y) + \bigl\langle{- {\widetilde {q}}(T-s,y)},{X_{s-}}
\bigr\rangle + F\bigl({\widetilde{q}}(T-s,y)\bigr) + \bigl\langle{R\bigl({
\widetilde{q}}(T-s,y)\bigr)},{X_{s-}}\bigr\rangle \\
&&\qquad= 0,
\end{eqnarray*}
and we conclude that $({\widetilde{M}}^{y}_t)_{t \in[0,T]}$ is a
local $\PP
^x$-martingale for all $x \in D$. It is also strictly positive, and
hence it is a
$\PP^x$-supermartingale. Therefore
\[
\mathbb{E}^{x}\bigl[e^{\langle{y},{X_T}\rangle}\bigr] = \mathbb
{E}^{x}\bigl[{\widetilde{M}}_T^{y}\bigr] \le{
\widetilde{M}}_0^{y} < \infty
\]
for all $x \in D$. The second part of the assertion, and in particular
the validity of equation \eqref{eq: transform formula} follows now by applying
Proposition~\ref{part of main theorem part a}.
\end{pf}

\subsection{Proof of Theorem~\texorpdfstring{\protect\ref{thmm:main_results}}{2.14}}
Looking at
Proposition~\ref{prop thb} and Proposition~\ref{part of main theorem
part a} we see that Theorem~\ref{thmm:main_results} is almost
proved. Only one issue in both parts of the theorem is not answered
yet, namely the minimality of $(p,q)$ in \eqref{eq: transform
formula} as minimal (hence unique, see Remark~\ref{rem112}) solution
of the extended Riccati system. We start with the following lemma:

%
\begin{lem}\label{finallem}
Let $(p,q)$ and $({\widetilde{p}},{\widetilde{q}})$ be given as in
Proposition~\ref{prop thb}.
Then for all $t \in[0,T]$ and $x \in D$,
\[
p(t,y) + \bigl\langle{q(t,y)},{x}\bigr\rangle \le{\widetilde{p}}(t,y) + \bigl
\langle{{\widetilde{q}}(t,y)},{x}\bigr\rangle.
\]
\end{lem}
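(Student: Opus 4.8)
The idea is that the exponential process built from the candidate solution $(\wt p,\wt q)$ is a positive \emph{super}martingale terminating in $e^{\scal{y}{X_t}}$, whereas the process built from $(p,q)$ is a genuine martingale — equivalently, the transform formula of Proposition~\ref{part of main theorem part a} already asserts $\Ex{x}{e^{\scal{y}{X_t}}}=\exp(p(t,y)+\scal{q(t,y)}{x})$ with equality. Hence, comparing both processes at the common terminal value and exploiting that a supermartingale can only lose mass, the $(\wt p,\wt q)$-expression dominates the $(p,q)$-expression at the initial time, which is exactly the asserted inequality.

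Concretely I would fix $t\in[0,T]$ and $x\in D$ and work under $\PP^x$, so that $X_0=x$ $\PP^x$-a.s. Since the restriction of a solution of the extended Riccati system to a shorter interval is again a solution, $(\wt p,\wt q)$ restricted to $[0,t]$ is a solution up to time $t$. Running exactly the Ito-formula computation from the proof of Proposition~\ref{prop thb} (with $T$ replaced by $t$) shows that
\[
V_s := \exp\bigl(\wt p(t-s,y) + \scal{\wt q(t-s,y)}{X_s}\bigr), \qquad s\in[0,t],
\]
is a strictly positive local $\PP^x$-martingale, hence a $\PP^x$-supermartingale, with terminal value $V_t=e^{\scal{y}{X_t}}$ and deterministic initial value $V_0=\exp(\wt p(t,y)+\scal{\wt q(t,y)}{x})$. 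The supermartingale inequality then yields
\[
\exp\bigl(\wt p(t,y)+\scal{\wt q(t,y)}{x}\bigr) = V_0 \ge \Ex{x}{V_t} = \Ex{x}{e^{\scal{y}{X_t}}}.
\]

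On the other hand, by Proposition~\ref{part of main theorem part a} together with Lemma~\ref{Lem:x_reduction}\ref{item:implication3} the moment $\Ex{x}{e^{\scal{y}{X_t}}}$ is finite for every $t\in[0,T]$ and $x\in D$, and the transform formula \eqref{eq: transform formula} holds with the very same functions $p(\cdot,y),q(\cdot,y)$ — these are intrinsically determined by $\xi\mapsto\Ex{\xi}{e^{\scal{y}{X_\tau}}}$ via the Cauchy-type functional equation of Lemma~\ref{Lem:proto_moment}, so they do not depend on the choice of terminal time. Thus $\Ex{x}{e^{\scal{y}{X_t}}}=\exp(p(t,y)+\scal{q(t,y)}{x})$, and combining with the previous display gives
\[
\exp\bigl(p(t,y)+\scal{q(t,y)}{x}\bigr) \le \exp\bigl(\wt p(t,y)+\scal{\wt q(t,y)}{x}\bigr)
\]
for all $t\in[0,T]$ and $x\in D$; taking logarithms is the claim. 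The only delicate point — and the place I expect to need the most (though still mild) care — is the bookkeeping that lets one run the supermartingale argument for every terminal time $t\le T$ rather than just $t=T$: one must note that restrictions of Riccati solutions stay solutions, that the relevant exponential moments remain finite on $[0,T]$, and that the functions $p,q$ produced by the moment formula are terminal-time independent.
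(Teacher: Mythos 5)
Your proposal is correct and uses essentially the same argument as the paper: both exploit that $(\wt p,\wt q)$ generates a positive local martingale, hence supermartingale, via Ito's formula, while $(p,q)$ generates a true martingale via the conditional transform formula, and both have the same terminal value $e^{\scal{y}{X_\cdot}}$, so the comparison of initial values under $\PP^x$ yields the inequality. If anything you are slightly more explicit than the paper about the point that the supermartingale comparison must be run for each terminal time $t\le T$ (by restricting the Riccati solution to $[0,t]$) and about why $p,q$ are terminal-time independent; the paper's version instead phrases it as $M^y_t=\Excond{x}{M^y_T}{\cF_t}=\Excond{x}{\wt M^y_T}{\cF_t}\le \wt M^y_t$ for all $t\in[0,T]$ and then takes logarithms, leaving that bookkeeping implicit.
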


\begin{pf}
Set $M_t^y = \exp(p(T-t,y) + \langle{q(T-t,y)},{x}\rangle)$, and
define ${\widetilde{M}}_t^{y}$ as in~\eqref{Eq:M_supermartingale}.
Then, for
each $x \in D$ the process $M^y$ is a $\PP^x$-martingale\vspace*{1pt} [see \eqref
{Eq:M_martingale} and below]; ${\widetilde{M}}^y$ is a $\PP
^x$-supermartingale,
and they satisfy $M_T^{y} = {\widetilde{M}}_T^{y}$.
Hence
\[
M_t^{y} = \mathbb{E}^{x}\bigl[M_T^{y}|
\cF_t\bigr] = \mathbb{E}^{x}\bigl[{\widetilde
{M}}_T^{y}|\cF_t\bigr] \le{
\widetilde{M}}_t^{y}
\]
for all $t \in[0,T]$. Taking logarithms the claimed inequality follows.
\end{pf}

\begin{pf*}{Proof of Theorem~\ref{thmm:main_results}}
Proof of (a): In view of Remark~\ref{rem112} we only
need to show that the solution $(p,q)$ of the Riccati system
established in Proposition~\ref{part of main theorem part a} is minimal.
Let $(\widetilde p,\widetilde q)$ be another solution on $[0,T']$ of
the extended Riccati system, $T'\leq T$. Then by Proposition~\ref{prop
thb} there exists $(p^*,q^*)$ such that~\eqref{eq: transform formula}
holds for all $y\in D$ and $t\in[0,T']$, as is the case for $(p,q)$.
By taking logarithms of the respective right-hand sides of \eqref{eq:
transform formula} and by applying Lemma~\ref{finallem}, we see that
\[
p+\langle q,y\rangle=p^*+\bigl\langle q^*,y\bigr\rangle\leq\widetilde p+\langle
\widetilde q,x\rangle,
\]
on $[0, T']$ and for all $y\in D$. Hence by Definition~\ref
{definminsol} $(p,q)$ is the minimal solution of the extended Riccati
system, and we are done with part (a).

The proof of (b) follows immediately from Lemma~\ref
{finallem}, Definition~\ref{definminsol} and Remark~\ref{rem112}.
\end{pf*}

\section{Proofs for complex moments of affine processes}\label{sec: complex}
In this section we show Theorem~\ref{thmm:main_complex} on the
existence of complex moments of affine processes, whose state space
satisfies Assumption~\ref{Ass:complex}. The key to the proof is to
relate the lifetime of solutions $(\phi,\psi)$ of the complex
Riccati system \eqref{Eq:Cric1}--\eqref{Eq:Cric2} and the solutions
$(p,q)$ of the extended Riccati system
\eqref{Eq:Eric1}--\eqref{Eq:Eric2}. Unlike in preceding parts of the
paper, we only solve for initial values in the interiors
$y\in\cY^\circ$ [resp., $u\in S(\cY^\circ)$]. Also, in this
section we
need more precise knowledge about the restrictions on the
parameters, which appear in the Riccati equations.

With $S_d^+$ we denote the $d\times d$ positive semidefinite matrices.
Let $T_+(y)$ [resp., $T_+(u)$] denote the maximal lifetime of $t\mapsto
(p(t,y),q(t,y))$ [resp., $t\mapsto(\phi(t,u),\psi(t,u))$].

%
\begin{prop}\label{c blow up after r}
Suppose that Assumption~\ref{Ass:complex} holds true, and let $u\in
S(\cY^\circ)$ and $y=\Re(u)$. Then $T_+(u)\geq T_+(y)$.
\end{prop}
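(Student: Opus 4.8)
The plan is to prove that the complex Riccati flow $t \mapsto (\phi(t,u), \psi(t,u))$ lives at least as long as the real flow $t \mapsto (p(t,y), q(t,y))$ with $y = \Re(u)$. The strategy is a continuation argument combined with an a priori bound: as long as the real flow $q(t,y)$ stays in $\cY^\circ$, I want to show that $\Re \psi(t,u)$ is trapped in a region where $F$ and $R$ remain analytic, which prevents $\psi$ from blowing up before time $T_+(y)$. Since $F$ and $R$ are locally Lipschitz on the open strip $S(\cY^\circ)$ (Remark~\ref{rem comparison}), the complex Riccati system has a unique local solution, and by the standard escape-to-the-boundary principle, if $T_+(u) < T_+(y)$ then $\psi(t,u)$ must leave every compact subset of $S(\cY^\circ)$ as $t \uparrow T_+(u)$, i.e. either $\Re \psi(t,u)$ approaches $\partial \cY^\circ$ or $\abs{\Im \psi(t,u)} \to \infty$. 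I must rule both out.

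The key step is to control $\Re \psi(t,u)$ by $q(t,y)$. I expect that on the common interval of existence one has the comparison
\[
p(t,y) + \scal{\Re \psi(t,u)}{x} \ge \Re\big(\phi(t,u) + \scal{\psi(t,u)}{x}\big) = \Re \phi(t,u) + \scal{\Re \psi(t,u)}{x},
\]
which is trivially an equality, so the real content must come from a genuine differential inequality. The natural route is to observe that for $w \in S(\cY^\circ)$ one has the pointwise bound $\Re F(w) \le F(\Re w)$ and, after pairing with $x \in D$, $\Re \cR_x(w) \le \cR_x(\Re w)$; this follows from $\abs{e^{\scal{\xi}{w}}} = e^{\scal{\xi}{\Re w}}$ and Jensen/convexity applied to the L\'evy--Khintchine integrals, exactly the kind of estimate underlying Proposition~\ref{Prop:analytic_extension}. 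From this I would derive that $v(t) := \Re \psi(t,u)$ satisfies a differential inequality componentwise dominated (in the sense of the partial order induced by pairing against all $x \in D$) by the autonomous field $R$ driving $q(\cdot, y)$, and then invoke a comparison theorem for the condensed scalar quantities $q_x(t,y) = p(t,y) + \scal{q(t,y)}{x}$ versus their complex analogues to conclude that $v(t)$ cannot exit $\cY^\circ$ before $q(t,y)$ does — hence not before $T_+(y)$.

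This is precisely where Assumption~\ref{Ass:complex} enters and where the main obstacle lies: the elementary bound $\Re \cR_x(w) \le \cR_x(\Re w)$ alone controls $\Re \psi$ but says nothing about $\Im \psi$, and on its own a comparison argument for a vector-valued ODE whose field $R$ is not quasimonotone is delicate — this is why the paper restricts to $\Rplus^m \times \RR^n$ and $S_d^+$, where the cone structure of $D$ makes the partial order "$\le$ against all $x \in D$" well-behaved and the relevant monotonicity of $R$ (on the boundary pieces) holds. So the hard part will be: (i) establishing the quasimonotonicity/comparison lemma for the real part of the complex flow against the real flow using the special geometry of the two admissible state spaces, and (ii) separately bounding $\abs{\Im \psi(t,u)}$ on compact subintervals — here I would note that once $\Re \psi(t,u)$ is confined to a compact $K \subset \cY^\circ$, the field $R$ restricted to the tube $S(K)$ has at most linear growth in the imaginary direction (since $R$ is, modulo the quadratic diffusion term, a difference of analytic L\'evy exponents, and on $S(K)$ the exponential integrals are uniformly dominated), so $\abs{\Im \psi}$ can only grow at most exponentially in $t$ and therefore stays finite up to any $t < T_+(y)$. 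Combining (i) and (ii) shows $\psi(t,u)$ stays in a compact subset of $S(\cY^\circ)$ on $[0, T_+(y))$, contradicting the escape principle if $T_+(u) < T_+(y)$, and the proposition follows.
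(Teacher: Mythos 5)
Your overall strategy is the right one and matches the paper's: control $\Re\psi(t,u)$ by ODE comparison against the real flow $q(t,y)$ using quasimonotonicity of $R$ with respect to the cone ordering (Lemmas~\ref{qmi}, \ref{qmi R matrix}, combined with Volkmann's theorem and the ``downward closure'' of $\cY^\circ$ in Lemmas~\ref{order regular D canonical}, \ref{order regular D matrix}), and separately bound $\norm{\psi(t,u)}$ by a Gronwall-type estimate to rule out blow-up of the imaginary part. But there is a genuine gap in your step~(ii), and it is precisely the place where Assumption~\ref{Ass:complex} earns its keep.

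You claim that on a tube $S(K)$ with $K\subset\cY^\circ$ compact, $R$ has ``at most linear growth in the imaginary direction,'' appealing to the boundedness of the L\'evy-exponent integrals and waving at the ``quadratic diffusion term.'' That claim is false: the diffusion part of $R_i$ is $\tfrac12\scal{u}{\alpha^i u}$, which grows quadratically in $\Im u$. What the Gronwall argument actually needs is a bound of the form $\Re\scal{\bar u}{R(u)} \le C(\Re u)(1+\norm{u}^2)$; the troublesome contribution is the \emph{cubic} term $\Re(\bar u_i\scal{u}{\alpha^i u})$, which has no reason to be $O(\norm{u}^2)$ for a generic positive semidefinite $\alpha^i$. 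In the canonical case this is salvaged only because admissibility kills the entries $\alpha^i_{jk}$ with $j,k\in I\setminus\set{i}$: what survives is $\alpha^i_{ii}\abs{u_i}^2\Re u_i$ plus cross-terms involving $u_J$, all of which can be dominated by $g(\Re u)(1+\norm{u_I}^2)(1+\norm{u_J}^2)$ — this is the content of Lemma~\ref{Lem:R_properties_complex} and, within it, Lemma~\ref{Lem:complex_ineq}. In the matrix case the analogous cubic term $\Re\tr(\bar u\, u\alpha u)$ is \emph{not} controllable for degenerate $\alpha$; this is exactly why Assumption~\ref{Ass:complex}\eqref{assumption matrix} is imposed, allowing a reduction to $\alpha=0$ or $\alpha=I$ followed by the estimate of Lemma~\ref{involvemelemma}. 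Your proposal neither isolates this cubic obstruction nor explains why Assumption~\ref{Ass:complex} is in force, so the a priori bound that keeps $\psi(t,u)$ in a compact subset of $S(\cY^\circ)$ is not actually established. The comparison half of your argument is fine in outline; the growth-bound half needs Lemma~\ref{Lem:R_properties_complex} (resp.\ \ref{Lem:R_properties_complex_matrix}) or a substitute for it.
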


We split the proof into the two cases covered by Assumption~\ref
{Ass:complex}, a state space $D$ of the form $\mathbb{R}_{\geq0}^m
\times\RR^n$
and a state space of the form $S_d^+$. Note that $\mathbb{R}_{\geq
0}^m \times\RR
^n$ and $S_d^+$ are convex cones. To apply certain results of \citet
{Volkmann1973} on multivariate ODE comparison, we introduce the
following property:

%
\begin{defn}\label{Def:quasi_monotone}
Let $K\subset\mathbb R^d$ be a proper closed, convex cone, and denote
by $\preceq$ the induced partial order. Let $U\subset\mathbb R^d$. A function
$f\dvtx U\rightarrow\mathbb R^d$ is called quasimonotone increasing (with
respect to $K$), if for all $y,z\in U$ for which $y\preceq z$
and $\langle y, x\rangle=\langle z, x\rangle$ for some $x\in K$ it
holds that $\langle f(y), x\rangle\leq\langle f(z), x\rangle$.
\end{defn}

\subsection{State space \texorpdfstring{$D=\mathbb{R}_{\geq0}^m\times\RR^n$}{D=\mathbb{R}_{>=0}^mtimes\mathbb{R}^n}}
In this section\vspace*{2pt} we consider the ``canonical state space'' $D =
\mathbb{R}_{\geq0}^m \times\RR^n$ from
Duffie, Filipovi{\'c} and
Schachermayer
(\citeyear{Duffie2003}). We use
the index
sets $I=\{1,2,\ldots,m\}$ and $J=\{m+1,\ldots,d\}$ corresponding to
the positive and to the real valued components of $D$ respectively.
Accordingly, $R_I$ denotes the function $(R_1,\ldots, R_m)$, and
similarly $R_J$ is constituted by the last $n$ coordinates of $R$.

First, we recall the definition of the admissible parameter set for
(conservative) affine processes on $\mathbb{R}_{\geq0}^m \times\RR
^n$ from Duffie, Filipovi{\'c} and
Schachermayer (\citeyear
{Duffie2003}):

%
\begin{defn}\label{def: admin}
A set of $\RR^d$-vectors $b,\beta^1,\ldots,\beta^d$, positive
semidefinite $d \times d$ matrices $a,\alpha^1,\ldots,\alpha^d$, L\'
evy~measures
$m,\mu^1,\ldots,\mu^d$ on $\RR^d$, is
called \emph{admissible for $D = \mathbb{R}_{\geq0}^m \times\RR
^n$} if and only if
\begin{eqnarray*}
a_{kl} &=& 0 \qquad\mbox{for all } k\in I \mbox{ or } l\in I,
\\
\alpha^j&=&0 \qquad\mbox{for all } j\in J,
\\
\alpha^i_{kl} &= &0\qquad \mbox{if } k\in I\setminus\{i\}
\mbox{ or } l\in I\setminus\{i\};\\
b &\in& D,
\\
\beta^i_k - \int\xi_k\mu^i(d\xi)
&\ge&0 \qquad\mbox{for all } i\in I, k\in I\setminus\{i\},
\\
\beta^i_k &=& 0 \qquad\mbox{for all } j\in J, k\in I;\\
\int_{|\xi|\le1} |\xi_I| m(d\xi) &<& \infty,
\\
\mu^j &=& 0\qquad \mbox{for all } j\in J,
\\
\int_{|\xi|\le1} {|\xi_{I\setminus\{i\}}|} \mu^i(d\xi)
&<& \infty.
\end{eqnarray*}
\end{defn}

%
\begin{rem}
The matrices $a,\alpha^1,\ldots,\alpha^d$ are frequently referred to as
\emph{diffusion matrices}, the vectors $b,\beta^1,\ldots,\beta^d$
as
\emph{drift vectors} and the L\'evy~measures $m,\mu^1,\ldots,\mu^d$ as
\emph{jump measures}.
\end{rem}

Let $R(y) = (R_1(y), \ldots, R_d(y) )$ be defined as in
Proposition~\ref{Prop:FR}. The admissibility conditions imply that each
$R_1(y), \ldots, R_d(y)$ is a convex lower semi-continuous function of
L\'evy--Khintchine-type. Denoting $\mu^0(d\xi):=m(d\xi)$ we
therefore have
%
%
\begin{equation}
\label{simplfied D canonical} \cY= \Biggl\{y \in\RR^d\dvtx \sum
_{i=0}^d \int_{|\xi| \ge1}
e^{\langle
{y},{\xi}\rangle} \mu^i(d\xi) < \infty\Biggr\},
\end{equation}
which is the intersection of the effective domains of $F, R_1, \ldots,
R_d$.

We start with the following crucial lemma:

%
\begin{lem}\label{Lem:R_properties_complex}
There exists a function $g$ which is finite, nonnegative and
convex on $\cY$ such that for all $u\in S(\cY^\circ)$ we have
%
%
\begin{equation}
\label{Item:Rnorm} \Re\bigl(\bigl\langle{\overline{u}_I},{R_I(u)}
\bigr\rangle\bigr) \le g(\Re u) \bigl(1 + |u_J|^2\bigr)
\bigl(1 + |u_I|^2\bigr).
\end{equation}
\end{lem}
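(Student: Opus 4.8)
The plan is to write out $R_i(u)$ explicitly from \eqref{Eq:FR_def}, pair with $\overline{u_i}$ for each $i\in I$, take real parts, and bound each of the three contributions — the quadratic (diffusion) term, the linear (drift) term, and the jump term — by something of the asserted product form $g(\Re u)(1+\norm{u_J}^2)(1+\norm{u_I}^2)$, finally taking $g$ to be the sum (or max) of the resulting coefficient functions of $y=\Re u$. For the diffusion term, admissibility forces $\alpha^i_{kl}=0$ whenever $k$ or $l$ lies in $I\setminus\{i\}$, so $\scal{u}{\alpha^i u}$ only couples $u_i$ with $u_i$ and with the $J$-block $u_J$; hence $\Re\,\overline{u_i}\,\scal{u}{\alpha^i u}$ is a polynomial of degree at most two in $\norm{u_i}$ times degree at most two in $\norm{u_J}$, with constant coefficients, which is trivially dominated by $C(1+\norm{u_I}^2)(1+\norm{u_J}^2)$. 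The drift term $\beta^i\cdot u$ paired with $\overline{u_i}$ is at most quadratic in $\norm{u_I}$ (the $u_J$ contribution is linear), again with constant coefficients, hence absorbed into the same bound.

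The substance is the jump term $J_i(u):=\Re\,\overline{u_i}\int_{\RR^d\setminus\{0\}}\bigl(e^{\scal{\xi}{u}}-1-\scal{h(\xi)}{u}\bigr)\mu^i(d\xi)$. Here I would split the integral at $|\xi|\le 1$ and $|\xi|>1$. On $|\xi|\le 1$ one uses the admissibility integrability $\int_{|\xi|\le1}|\xi_{I\setminus\{i\}}|\mu^i(d\xi)<\infty$ together with the pointwise estimate $|e^{\scal{\xi}{u}}-1-\scal{h(\xi)}{u}|\le \tfrac12 |\scal{\xi}{u}|^2 e^{|\scal{\xi}{\Re u}|}$ (valid for small $\xi$), noting that on the support $\xi_I\ge 0$ componentwise so that $\Re\scal{\xi}{u}$ is controlled; the key point, as in \citet{Duffie2003}, is that the $\xi_i$-direction may have only a first moment, but the compensator $\scal{h(\xi)}{u}$ subtracts exactly the linear-in-$\xi_i$ part, and what survives after multiplying by $\overline{u_i}$ is quadratic in $(u_i,u_J)$ and integrable against $\mu^i$ restricted to small jumps, yielding a bound $g_1(y)(1+\norm{u_I}^2)(1+\norm{u_J}^2)$ with $g_1(y)=\tfrac12\int_{|\xi|\le1}(\text{something}) e^{\scal{\xi}{y}}\mu^i(d\xi)$, finite by admissibility. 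On $|\xi|>1$ one bounds $|e^{\scal{\xi}{u}}|=e^{\scal{\xi}{y}}$ and the compensator term crudely, and uses $|\overline{u_i}|\le \tfrac12(1+\norm{u_I}^2)$, so that $\int_{|\xi|>1}e^{\scal{\xi}{y}}\mu^i(d\xi)<\infty$ for $y\in\cY^\circ$ (by \eqref{simplfied D canonical}) provides the $y$-dependent constant. Summing over $i\in I$ and defining $g$ as the sum of all the coefficient functions obtained — each of which is finite on $\cY^\circ$, and indeed finite, non-negative and convex on all of $\cY$ since it is a sum of moments of $e^{\scal{\xi}{y}}$ against the measures $\mu^i$ plus nonnegative constants — completes the argument.

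The main obstacle I anticipate is the bookkeeping in the small-jump integral: one must keep track of which components of $\xi$ carry only first moments versus second moments under $\mu^i$ (only $\xi_i$ among the $I$-components is allowed to have just a first moment, by the condition $\int_{|\xi|\le1}|\xi_{I\setminus\{i\}}|\mu^i(d\xi)<\infty$), and verify that after the pairing with $\overline{u_i}$ and the Taylor expansion of $e^{\scal{\xi}{u}}-1-\scal{h(\xi)}{u}$, no term requiring a second moment of $\xi_i$ actually appears — which is precisely the structural reason the admissibility conditions are shaped the way they are. The factorized form $(1+\norm{u_I}^2)(1+\norm{u_J}^2)$ rather than $(1+\norm{u}^4)$ should drop out automatically once one observes that $\alpha^i$ and $\mu^i$ never couple two distinct $I$-components, so no monomial of combined $I$-degree higher than $2$ survives, and the $J$-components enter with their own independent quadratic budget.
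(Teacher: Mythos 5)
Your decomposition into drift, diffusion, large-jump, and small-jump contributions mirrors the paper's, and the drift, diffusion, and large-jump estimates go through exactly as you sketch. However, the small-jump estimate as you propose it has a genuine gap, and it is precisely the gap that the paper's Lemma~\ref{Lem:complex_ineq} was invented to close.

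The problem is your pointwise bound $|e^{\scal{\xi}{u}}-1-\scal{h(\xi)}{u}|\le\tfrac12\,|\scal{\xi}{u}|^2\,e^{|\scal{\xi}{\Re u}|}$ together with $|\overline{u_i}|\le|u_i|$. Apply it to the diagonal contribution in $|\scal{\xi}{u}|^2$: you get a term proportional to $|u_i|\cdot|u_i|^2\,\xi_i^2=|u_i|^3\,\xi_i^2$. Integrating against $\mu^i$ on $\{|\xi|\le 1\}$ is harmless (second moments of $\xi_i$ are automatically finite, so your worry about needing to avoid them is misplaced), but the resulting bound is $C\,|u_i|^3$, which grows cubically in $|u_I|$. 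When $\Im u_i\to\infty$ with $\Re u$ fixed, this cannot be dominated by $g(\Re u)(1+\norm{u_I}^2)(1+\norm{u_J}^2)$, since $g$ only sees $\Re u$ and the polynomial factor budgets only degree $2$ in $\norm{u_I}$. In other words, taking absolute values before taking the real part destroys the cancellation that is the whole point of the statement.

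The paper keeps the real part inside. After the careful Taylor split, the dangerous contribution is $|u_i|^2\,\xi_i\int_0^1(1-t)\,\Re\bigl(u_i\xi_i\,e^{t u_i\xi_i}\bigr)\,dt$, and Lemma~\ref{Lem:complex_ineq} gives $\int_0^1(1-t)\Re(z e^{tz})\,dt\le e^{(\Re z)_+}-1$ for every $z\in\CC$. The right-hand side is bounded uniformly in $\Im z$ and depends only on $(\Re z)_+$, so after multiplying by $|u_i|^2\,\xi_i$ and integrating one stays within the allowed degree-$2$ budget in $\norm{u_I}$, with a prefactor depending only on $\Re u$. Without this (or an equivalent refinement that keeps $\Re$ before estimating), the small-jump term cannot be forced into the asserted product form, so you would need to introduce that lemma, or some analogue of it, to complete the argument.
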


\begin{pf}
It clearly sufficient to show
\[
\Re\bigl(\overline{u}_i R_i(u)\bigr) \le g_i
\bigl(\Re(u)\bigr) \bigl(1 + |u_J|^2\bigr) \bigl(1 +
|u_I|^2\bigr)
\]
individually for each $i \in I$ and with some nonnegative convex
$g_i(\cdot)$ that is finite on $\cY$. In addition we may split $R_i(u)$
into the drift part, the diffusion part, a small-jump part and a
large-jump part and show the inequality for each part separately.
The drift and the large jump-part are the easiest to deal with.
Using the Cauchy--Schwarz inequality we infer the existence of a
positive constant $C$ such that
%
%
\begin{eqnarray}
\label{Eq:estimate_drift} \Re\bigl(\overline{u}_i \bigl(\bigl\langle{
\beta^i},{u}\bigr\rangle\bigr)\bigr) &\le& |u_i|\bigl(\bigl|
\beta _I^i\bigr||u_I| + \bigl|\beta_J^i\bigr||u_I|
\bigr)
\nonumber
\\[-8pt]
\\[-8pt]
\nonumber
&\le& C\bigl(1 + |u_J|^2\bigr) \bigl(1 +
|u_I|^2\bigr)
\end{eqnarray}
and
%
%
\begin{eqnarray}
\label{Eq:estimate_large_jump} \Re\biggl(\overline{u}_i \int_{|\xi| > 1}
e^{\langle{\xi},{u}\rangle} \mu^i(d\xi ) \biggr) &\le&|u_i| \int
_{|\xi| > 1} e^{\langle{\xi},{\Re u}\rangle} \mu^i(d\xi)
\nonumber
\\[-8pt]
\\[-8pt]
\nonumber
&\le&{\widetilde{g}}_i(\Re u) \bigl(1 + |u_I|^2
\bigr)
\end{eqnarray}
for the large-jump part. Here ${\widetilde{g}}_i(z):= \int_{|\xi| > 1}
e^{\langle{\xi},{z}\rangle} \mu^i(d\xi)$ clearly is a nonnegative convex
function which is finite on $\cY$. To estimate the diffusion part we
have to take into account the admissibility conditions, which tell
us that $\alpha^i_{ij}$ is zero if $j \in I \setminus\{i\}$. Thus
we obtain
%
%
\begin{eqnarray}
\label{Eq:estimate_diffusion} \qquad\Re\bigl(\overline{u}_i \bigl\langle{u},{
\alpha^i u}\bigr\rangle \bigr) &=& \alpha^i_{ii}
|u_i|^2 \Re u_i + 2 \Re\bigl(|u_i|^2
\alpha_{iJ} u_J\bigr) + \Re\bigl(\overline{u}_i
u_J^\top\alpha^i_{JJ}
u_J\bigr)
\nonumber
\\[-8pt]
\\[-8pt]
\nonumber
&\le& C\bigl(1 + (\Re u_I)_+\bigr) \bigl(1 +
|u_J|^2\bigr) \bigl(1 + |u_I|^2
\bigr),
\end{eqnarray}
as desired. The hardest term to estimate is the small-jump part. We
follow the proof of Lemma~6.2 in Duffie, Filipovi{\'c} and
Schachermayer
(\citeyear{Duffie2003}). As a shorthand
notation we introduce $u_{I-} = u_{I \setminus\{i\}}$ and $u_{J+}
= u_{J \cup\{i\}}$. First we do a Taylor expansion of
the integrand $h(\xi) = e^{\langle{\xi},{u}\rangle} - 1 -
\langle{\xi_{J+}},{u_{J+}}\rangle$ with $|\xi| \le1$,
%
%
\begin{eqnarray}
h(\xi) &=& e^{\langle{\xi},{u}\rangle} - e^{\langle{\xi
_{J+}},{u_{J+}}\rangle} + e^{\xi_i
u_i}
\bigl(e^{\langle{\xi_J},{u_J}\rangle} - 1 - \langle{\xi _J},{u_J}
\rangle\bigr)
\nonumber\\
\nonumber
&&{} + \langle{\xi_J},{u_J}\rangle
\bigl(e^{\xi_i u_i} - 1\bigr) + e^{\xi
_i u_i} - 1 - \xi_i
u_i
\\
& =& e^{\langle{\xi_{J+}},{u_{J+}}\rangle} \biggl(\int_0^1{e^{t\langle{\xi _{I-}},{u_{I-}}\rangle}
\,dt}\biggr) \langle {u_I},{\xi_I}\rangle
\\
\nonumber
&&{} + e^{\xi_i u_i} \biggl(\int_0^1{(1
- t)e^{t \langle{\xi
_J},{u_J}\rangle}\,dt}\biggr) \sum_{j,k \in J}
\xi_j \xi_k u_j u_k
\\
\nonumber
&&{} + \biggl(\int_0^1{e^{t \xi_i u_i}
\,dt}\biggr) \xi_i u_i \sum_{j \in J}
\xi_j u_j + \biggl(\int_0^1{(1
- t)e^{t \xi_i
u_i}\,dt}\biggr) \xi_i^2
u_i^2.
\end{eqnarray}
Next we calculate
\[
\Re\bigl(\overline{u}_i h(\xi)\bigr) = K(u,\xi) +
|u_i|^2 \xi_i \int_0^1{(1-t)
\Re\bigl(u_i \xi_i e^{t u_i \xi_i}\bigr)\,dt}.
\]
Since $|\xi| \le1$, we get
%
%
\begin{eqnarray}
\label{Eq:estimateK} \bigl|K(u,\xi)\bigr| &\le& e^{(\Re u)_+} \bigl(|u_i|^2
+ |u_I||u_J|^2 + |u_I||u_J|
\bigr) \bigl(|\xi_{I-}| + |\xi_{J+}|^2\bigr)
\nonumber
\\[-8pt]
\\[-8pt]
\nonumber
&\le&\bigl(1 + e^{(\Re u)_+}\bigr) \bigl(1 + |u_J|^2
\bigr) \bigl(1 + |u_I|^2\bigr) \bigl(|
\xi_{I-}| + |\xi_{J+}|^2\bigr)
\end{eqnarray}
for the first term. For the second term we use
Lemma~\ref{Lem:complex_ineq} below and estimate
%
%
\begin{equation}
\label{Eq:estimateL} |u_i|^2 \xi_i \int
_0^1{(1-t)\Re\bigl(u_i
\xi_i e^{t u_i \xi_i} \bigr)\,dt} \le|u_i|^2
\xi_i \bigl(e^{\xi_i (\Re u_i)_+} - 1\bigr).
\end{equation}
Adding up \eqref{Eq:estimateK} and \eqref{Eq:estimateK}, and
integrating against the L\'evy measure $\mu^i$ we obtain
%
%
\begin{equation}
\label{Eq:estimate_small_jump} \Re\biggl(\overline{u}_i \int_{|\xi| \le1}
{h(\xi) \mu^i(d\xi)}\biggr) \le{\widehat{g}}_i(\Re u)
\bigl(1 + |u_J|^2\bigr) \bigl(1 + |u_I|^2
\bigr)
\end{equation}
with
\[
{\widehat{g}}_i(y) = e^{y_+} \int_{|\xi| \le1}
{\bigl(|\xi_{I-}| + |\xi _{J+}|^2\bigr)
\mu^i(d\xi)} + \int_{|\xi| \le1} \xi_i
\bigl(e^{\xi_i
y_i} - 1\bigr)\mu^i(d\xi),
\]
which is nonnegative, convex and finite for all $y \in\RR^d$.
Adding up \eqref{Eq:estimate_drift}--\eqref{Eq:estimate_diffusion} and \eqref{Eq:estimate_small_jump}
yields the desired estimate \eqref{Item:Rnorm}.
\end{pf}

%
\begin{lem}\label{Lem:complex_ineq}
For any $z \in\CC$,
%
%
\begin{equation}
\label{Eq:complex} \int_0^1{(1 - t)\Re\bigl(z
e^{tz}\bigr)}\,dt \le\bigl(e^{(\Re z)_+} - 1\bigr).
\end{equation}
\end{lem}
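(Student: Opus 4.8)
The plan is to recognize the integrand as an exact derivative, so that the weight $(1-t)$ can be removed by a single integration by parts. Write $z = a + ib$ with $a,b \in \RR$; then $e^{tz} = e^{ta}(\cos tb + i\sin tb)$, so $\Re(e^{tz}) = e^{ta}\cos(tb)$, and since $\tfrac{d}{dt}e^{tz} = z e^{tz}$ and differentiation commutes with taking real parts, we get $\Re(z e^{tz}) = \tfrac{d}{dt}\bigl(e^{ta}\cos(tb)\bigr)$. Setting $f(t) := e^{ta}\cos(tb)$, note $f(0) = 1$.

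Next I would integrate by parts. Using $\int_0^1 (1-t) f'(t)\,dt = \bigl[(1-t)f(t)\bigr]_0^1 + \int_0^1 f(t)\,dt$ and $f(0)=1$, one obtains
\[
\int_0^1 (1-t)\,\Re\bigl(z e^{tz}\bigr)\,dt = -1 + \int_0^1 e^{ta}\cos(tb)\,dt .
\]
It therefore remains to show $\int_0^1 e^{ta}\cos(tb)\,dt \le e^{a_+}$. This follows from the pointwise bound $e^{ta}\cos(tb) \le e^{ta} \le e^{a_+}$ valid for all $t\in[0,1]$: the first inequality is trivial (it holds with room to spare when $\cos(tb) < 0$), and the second holds because $ta \le a_+$ on $[0,1]$, as one sees by treating the cases $a\ge 0$ and $a<0$ separately. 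Combining this with the displayed identity gives
\[
\int_0^1 (1-t)\,\Re\bigl(z e^{tz}\bigr)\,dt \le e^{a_+} - 1 = e^{(\Re z)_+} - 1 ,
\]
which is the claim.

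There is essentially no obstacle here; the only point worth isolating is the observation that $\Re(z e^{tz})$ is the $t$-derivative of $\Re(e^{tz})$, which turns the weighted integral into an unweighted integral of a quantity that is trivially dominated by $e^{(\Re z)_+}$ uniformly in $t$. All remaining steps are one-line elementary estimates.
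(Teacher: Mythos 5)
Your proof is correct, and it takes a genuinely different and more elementary route than the paper. The paper's proof evaluates $L(z) = \int_0^1 (1-t)\Re(ze^{tz})\,dt$ in closed form as $\frac{1}{p^2+q^2}\bigl\{p(e^p\cos q - 1 - p) + q(e^p\sin q - q)\bigr\}$ (with $z = p+iq$), appeals to \citet{Duffie2003} for the case $\Re z \le 0$, and then handles $\Re z > 0$ by a separate case analysis using the elementary trigonometric bounds $\cos q \le 1$, $\sin q \le q$, $e^p - 1 \le pe^p$. Your approach instead observes that $\Re(ze^{tz}) = \tfrac{d}{dt}\Re(e^{tz})$, integrates by parts to remove the weight $(1-t)$, and reduces everything to the single pointwise bound $e^{ta}\cos(tb) \le e^{a_+}$ on $[0,1]$. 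This is cleaner: it treats all $z \in \CC$ uniformly (no split on the sign of $\Re z$, no external citation), it avoids the explicit antiderivative of $e^{ta}\cos(tb)$ entirely, and it makes the mechanism transparent (the $-1$ on the right side is exactly the boundary term $f(0)$ from the integration by parts). The only thing the paper's computation buys is the explicit closed-form expression for the integral, which is not needed elsewhere.
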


\begin{pf}
For $\Re z \le0$ the inequality was shown in
Duffie, Filipovi{\'c} and
Schachermayer (\citeyear{Duffie2003}). Denote
the left-hand side by $L(z)$. Writing $z = p + iq$ and evaluating
the integral, we have that
\begin{eqnarray*}
L(z) &=& \int_0^1(1-t)e^{pt}\bigl(p
\cos(qt) - q \sin(qt)\bigr)
\\
&=&\frac{1}{p^2 + q^2} \bigl\{ p \bigl(e^p \cos(q) - 1 - p
\bigr) + q\bigl(e^p \sin(q) - q\bigr) \bigr\}.
\end{eqnarray*}
The expression is symmetric in $q$ such that we may assume that $q
\ge0$. If in addition $p \le0$, then using $\cos(q) \ge1 - q^2/2$
and $\sin(q) \le1$ we may estimate
\begin{eqnarray*}
L(z) \le\frac{1}{p^2 + q^2}\bigl(p\bigl(e^p - 1 - p\bigr) -
q^2 + q^2 e^p (1 - p/2)\bigr).
\end{eqnarray*}
Since $e^p(1 - p/2) \le1$ and $(e^p - 1 - p) \ge0$, the right-hand
side is smaller than $0$ showing the lemma for $p \le0$. If $p \ge
0$, we may use that $\cos(q) \le1$, $\sin(q) \le q$ and $e^p - 1 \le
pe^p$ to estimate
\[
L(z) \le\frac{1}{p^2 + q^2} \bigl\{p^2\bigl(e^p - 1
\bigr) + q^2\bigl(e^p - 1\bigr)\bigr\} =
\bigl(e^p - 1\bigr),
\]
thus completing the proof.
\end{pf}

Recall Definition~\ref{Def:quasi_monotone} of quasimonotonicity with
respect to a convex cone $K$. Here, $K=\mathbb R^m_+$; in this
particular setting, quasimonotonicity of a function $f\dvtx U \subset K \to
\RR^m$ can be expressed in coordinates and is equivalent to
\[
y\preceq z,\mbox{ and } y_i=z_i \mbox{ for some } i
\in\{1,\ldots,m\} \Rightarrow f_i(y)=f_i(z).
\]

%
\begin{lem}\label{qmi}
Let $y_J\in\mathbb R^n$. For each $t\geq0$, $y_I\mapsto
R_I(y_I,\psi_J(t,y_J))$ is quasimonotone increasing (with respect to
the natural cone $\mathbb R_+^m$) on $\cY$.
\end{lem}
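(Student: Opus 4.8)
The plan is to reduce the claim, via the coordinatewise reformulation of quasimonotonicity recalled just above the statement, to proving the elementary inequality $R_i(y) \le R_i(z)$ for each fixed $i \in I$ and all $y,z$ in the domain satisfying $y_j = z_j$ for $j \in J$, $y_i = z_i$, and $y_k \le z_k$ for $k \in I\setminus\{i\}$ (the relevant domain here being the convex set of those $y_I$ with $(y_I,\psi_J(t,y_J)) \in \cY$). Let me first record that the $J$-block of the extended Riccati system is autonomous and linear with real coefficients -- by admissibility $\alpha^j = 0$ and $\mu^j = 0$ for $j \in J$ and $\beta^j_k = 0$ for $k \in I$ -- so $\psi_J(t,\cdot)$ is well defined on all of $\RR^n$, is real valued, and $R_J$ depends on $y_J$ alone; hence for fixed $t$ and $y_J$ the map under consideration is genuinely a function of $y_I$ only.

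To prove the inequality I would compare the two values of $R_i$ using the representation \eqref{Eq:FR_def}, piece by piece. The diffusion term $\tfrac12\scal{y}{\alpha^i y}$ involves only the coordinate $i$ and the coordinates in $J$, because admissibility forces $\alpha^i_{kl} = 0$ whenever $k$ or $l$ lies in $I\setminus\{i\}$; hence it is unchanged on passing from $y$ to $z$. Writing $w := z - y$, so that $w_i = 0$, $w_j = 0$ for $j \in J$, and $w_k \ge 0$ for $k \in I\setminus\{i\}$, and subtracting the two integrals (both finite, since $y,z \in \cY$), one obtains
\[
R_i(z) - R_i(y) = \scal{\beta^i}{w} + \int_{\RR^d\setminus\{0\}}\left(e^{\scal{\xi}{z}} - e^{\scal{\xi}{y}} - \scal{h(\xi)}{w}\right)\mu^i(d\xi).
\]
Since $\int_{|\xi|\le1}\abs{\xi_{I\setminus\{i\}}}\,\mu^i(d\xi) < \infty$ and $w$ is supported on $I\setminus\{i\}$, the term $\scal{h(\xi)}{w} = \Ind{|\xi|\le1}\scal{\xi}{w}$ is $\mu^i$-integrable; splitting it off and using $\scal{\beta^i}{w} = \sum_{k\in I\setminus\{i\}}\beta^i_k w_k$ gives
\[
R_i(z) - R_i(y) = \sum_{k\in I\setminus\{i\}} w_k\left(\beta^i_k - \int_{|\xi|\le1}\xi_k\,\mu^i(d\xi)\right) + \int_{\RR^d\setminus\{0\}}\left(e^{\scal{\xi}{z}} - e^{\scal{\xi}{y}}\right)\mu^i(d\xi),
\]
where every term on the right is finite.

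The conclusion then follows from two admissibility facts. On the support of $\mu^i$ one has $\xi_k \ge 0$ for every $k \in I\setminus\{i\}$ -- the standard support restriction for jump measures on $\Rplus^m\times\RR^n$ (cf. \citet{Duffie2003}) -- so that $\scal{\xi}{w} = \sum_{k\in I\setminus\{i\}}\xi_k w_k \ge 0$ holds $\mu^i$-a.e., whence $e^{\scal{\xi}{z}} - e^{\scal{\xi}{y}} = e^{\scal{\xi}{y}}\big(e^{\scal{\xi}{w}} - 1\big) \ge 0$ and the last integral is nonnegative. The same support restriction yields $\int_{|\xi|\le1}\xi_k\,\mu^i(d\xi) \le \int\xi_k\,\mu^i(d\xi)$, so the admissibility condition $\beta^i_k - \int\xi_k\,\mu^i(d\xi) \ge 0$ (valid for $i \in I$, $k \in I\setminus\{i\}$), together with $w_k \ge 0$, makes the first sum on the right nonnegative as well. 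Hence $R_i(z) \ge R_i(y)$ for every $i \in I$, which is exactly quasimonotonicity of $y_I \mapsto R_I(y_I,\psi_J(t,y_J))$ with respect to $\Rplus^m$ in the sense of Definition~\ref{Def:quasi_monotone}. I expect the only delicate point to be the bookkeeping of integrability when splitting off the truncation term $\scal{h(\xi)}{w}$ and when asserting finiteness of the last integral; this is handled entirely by the conditions $\int_{|\xi|\le1}\abs{\xi_{I\setminus\{i\}}}\mu^i(d\xi) < \infty$ and $y,z \in \cY$, and involves no real difficulty.
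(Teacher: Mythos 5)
Your proof is correct. The paper does not carry out the verification itself---it proves Lemma~\ref{qmi} by citing \cite{keller} and \cite{mayerhofer}---so your proposal supplies the direct argument that the paper delegates. The structure is the expected one: reduce, via the coordinatewise reformulation of quasimonotonicity for the cone $\Rplus^m$ (note that the displayed reformulation just above the lemma should read $f_i(y) \le f_i(z)$ rather than $f_i(y) = f_i(z)$), to the inequality $R_i(y) \le R_i(z)$ when $w := z-y$ is supported on $I\setminus\{i\}$ with nonnegative entries, and verify it term by term from the admissibility conditions. Your three ingredients are exactly right: the block structure of $\alpha^i$ (vanishing off $\{i\}\cup J$) kills the diffusion contribution; the drift-minus-small-jump combination is controlled by $\beta^i_k - \int\xi_k\,\mu^i(d\xi)\ge 0$ for $k\in I\setminus\{i\}$; and the remaining integral of $e^{\scal{\xi}{z}}-e^{\scal{\xi}{y}} = e^{\scal{\xi}{y}}(e^{\scal{\xi}{w}}-1)$ is nonnegative because $\mu^i$ is supported on $\Rplus^m\times\RR^n$, so $\scal{\xi}{w}\ge 0$ $\mu^i$-a.e. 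Your integrability bookkeeping for splitting off the truncation term $\scal{h(\xi)}{w}$ is also correct, resting on $\int_{|\xi|\le1}|\xi_{I\setminus\{i\}}|\,\mu^i(d\xi)<\infty$ and $y,z\in\cY$. Two minor remarks: the support restriction $\supp\mu^i\subset\Rplus^m\times\RR^n$, which your argument genuinely needs in two places, is not listed explicitly in this paper's admissibility display (it is part of the admissibility definition in \citet{Duffie2003}), so you were right to flag that dependence; and your preliminary observation that the $J$-block of $\psi$ is autonomous and linear---so the map under study is truly a function of $y_I$ alone---is a useful clarification that the paper leaves implicit.
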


\begin{pf}
See, for instance, \citet{keller} or \citet{mayerhofer}.
\end{pf}

We further need the following special property of $\cY^\circ$.

%
\begin{lem}\label{order regular D canonical}
If $y\in\cY^\circ$, $z\in\mathbb R^d$ and $z_I\preceq y_I$,
$z_J=y_J$, then we also have $z\in\cY^\circ$.
\end{lem}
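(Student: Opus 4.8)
\textbf{Plan for the proof of Lemma \ref{order regular D canonical}.}
The statement to prove is: if $y \in \cY^\circ$, $z \in \RR^d$, $z_I \preceq y_I$ and $z_J = y_J$, then $z \in \cY^\circ$. The plan is to use the explicit description \eqref{simplfied D canonical} of $\cY$ together with the admissibility conditions that constrain the support of the jump measures $\mu^i$, $i = 0, 1, \dots, d$. The crucial structural fact is that for the canonical state space the L\'evy measures $m = \mu^0$ and $\mu^i$ ($i \in I$) are supported on $D = \Rplus^m \times \RR^n$ (this is part of admissibility / Theorem \ref{Thm:semimartingale}, since $K(x,\cdot)$ must be a measure on $D$ for every $x \in D$, and letting $x$ range shows each $\mu^i$ is supported on $D$ too). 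In particular, for $\xi$ in the support of any $\mu^i$ we have $\xi_I \succeq 0$ componentwise.

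First I would show that $z \in \cY$. Fix $i \in \{0, \dots, d\}$ and split $\scal{z}{\xi} = \scal{z_I}{\xi_I} + \scal{z_J}{\xi_J}$. Since $\xi_I \succeq 0$ on the support of $\mu^i$ and $z_I \preceq y_I$, we get $\scal{z_I}{\xi_I} \le \scal{y_I}{\xi_I}$, while $\scal{z_J}{\xi_J} = \scal{y_J}{\xi_J}$ because $z_J = y_J$. Hence $e^{\scal{z}{\xi}} \le e^{\scal{y}{\xi}}$ for $\mu^i$-a.e. $\xi$, and therefore
\[
\sum_{i=0}^d \int_{|\xi| \ge 1} e^{\scal{z}{\xi}} \mu^i(d\xi) \le \sum_{i=0}^d \int_{|\xi| \ge 1} e^{\scal{y}{\xi}} \mu^i(d\xi) < \infty,
\]
so $z \in \cY$ by \eqref{simplfied D canonical}. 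This monotonicity argument also shows that $\cY$ is "downward closed" in the $I$-coordinates: if $w \in \cY$ and $w'_I \preceq w_I$, $w'_J = w_J$, then $w' \in \cY$.

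Next I would upgrade $z \in \cY$ to $z \in \cY^\circ$. Since $y \in \cY^\circ$ there is $\varepsilon > 0$ with the open ball $B_\varepsilon(y) \subset \cY$. Consider the point $\bar z := z - \tfrac{\varepsilon}{2\sqrt d}\,\mathbf 1_I$ (decreasing every $I$-coordinate of $z$ by a fixed small amount, leaving $J$-coordinates fixed), where $\mathbf 1_I$ is the indicator vector of $I$. For any $w \in B_{\varepsilon/2}(\bar z)$ one has $w_J$ close to $y_J$ and $w_I \preceq z_I + \tfrac{\varepsilon}{2\sqrt d}\mathbf 1_I - \text{(something)}$; more precisely, the point $\hat w$ obtained from $w$ by replacing $w_I$ with $\max(w_I, \text{appropriate cutoff})$ lies in $B_\varepsilon(y) \subset \cY$ and dominates $w$ in the $I$-coordinates with equal $J$-coordinates, so by the downward-closedness just established $w \in \cY$. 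Hence $B_{\varepsilon/2}(\bar z) \subset \cY$, i.e. $\bar z \in \cY^\circ$; and then since $z$ lies on the segment between $\bar z \in \cY^\circ$ and (a point obtained by pushing the $I$-coordinates up, which is still in $\cY$ by convexity with $y$), convexity of $\cY$ gives $z \in \cY^\circ$. A cleaner way to organize the last step: $z = \tfrac12(\bar z + (2z - \bar z))$; one checks $2z - \bar z = z + \tfrac{\varepsilon}{2\sqrt d}\mathbf 1_I$ has $I$-coordinates $\preceq y_I + (z_I - y_I) + \tfrac{\varepsilon}{2\sqrt d}\mathbf 1_I$ — here I would instead simply invoke that $\cY^\circ$ is convex and open and that a point which is a nontrivial convex combination of an interior point and any point of a convex set is interior.

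\textbf{Main obstacle.} The only real subtlety is the correct bookkeeping in passing from $z \in \cY$ to $z \in \cY^\circ$: a priori monotonicity and downward-closedness in the $I$-directions do not by themselves give an open neighborhood, because perturbations in the $J$-directions are unconstrained. The resolution, as sketched, is to exploit that $y$ is interior (so we have room in \emph{all} directions around $y$) and combine a small downward shift in the $I$-coordinates of $z$ with the downward-closedness to absorb arbitrary small perturbations; then convexity of $\cY$ with the interior point $y$ finishes the job. Everything else — the support property of the $\mu^i$ and the elementary inequality $e^{\scal{z}{\xi}} \le e^{\scal{y}{\xi}}$ on those supports — is routine given \eqref{simplfied D canonical} and admissibility.
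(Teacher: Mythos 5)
Your overall strategy matches the paper's: use \eqref{simplfied D canonical} together with the fact that the $\mu^i$ are supported on $D=\Rplus^m\times\RR^n$, so $\xi_I\succeq 0$ on their support, compare $\scal{z}{\xi}$ with $\scal{y}{\xi}$, and then upgrade to the interior using the ball $B_\varepsilon(y)\subset\cY$. The first half of your argument (that $z\in\cY$, and the ``downward-closedness'' in the $I$-coordinates) is correct and is exactly what the paper uses.

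However, the second half -- passing from $z\in\cY$ to $z\in\cY^\circ$ -- is overcomplicated and not actually nailed down. The detour through $\bar z = z - \tfrac{\varepsilon}{2\sqrt d}\mathbf 1_I$ is unnecessary, and the step ``the point $\hat w$ obtained from $w$ by replacing $w_I$ with $\max(w_I,\text{appropriate cutoff})$ lies in $B_\varepsilon(y)$'' is left vague: you never specify the cutoff or verify that the resulting $\hat w$ is in $B_\varepsilon(y)$, so as written this is a gap. The clean resolution -- which is the paper's -- is to match perturbations. For $w$ with $\|w\|<\varepsilon$ we have $y+w\in\cY$; for $\xi\in D$ (so $\xi_I\succeq 0$),
\[
\scal{z+w}{\xi}=z_I^\top\xi_I+y_J^\top\xi_J+\scal{w}{\xi}\le \scal{y+w}{\xi},
\]
hence $e^{\scal{z+w}{\xi}}\le e^{\scal{y+w}{\xi}}$ $\mu^i$-a.e., so $z+w\in\cY$ by \eqref{simplfied D canonical}. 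Thus $B_\varepsilon(z)\subset\cY$ directly, with the same $\varepsilon$, and $z\in\cY^\circ$. In other words, instead of trying to dominate each $w\in B_{\varepsilon/2}(\bar z)$ by some unrelated $\hat w\in B_\varepsilon(y)$, just translate the \emph{whole} ball: $z+w$ is dominated (in the sense needed) by $y+w$, which is already known to be in $\cY$. No convexity of $\cY$ is needed, and the downward shift by $\tfrac{\varepsilon}{2\sqrt d}\mathbf 1_I$ is superfluous. Your $\hat w=y+v$ idea (with $v=w-\bar z$) would in fact have worked if applied directly to $z$ rather than $\bar z$; the $\max$ construction is the step that would need to be replaced.
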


\begin{pf}
We choose $\varepsilon>0$ such that $B_{\varepsilon}(y)=\{w\in
\mathbb R^d\mid|y-w|<\varepsilon\}\subset\cY$. By~\eqref{simplfied D canonical} we have for $i=0,1,\ldots,d$,
%
%
\begin{equation}
\label{est: moments interior} \int_{|\xi| \ge1} e^{\langle{y+w},{\xi}\rangle}
\mu^i(d\xi) < \infty,\qquad |w|<\varepsilon.
\end{equation}
Note the L\'evy measures $\mu^i$ are clearly positive and supported on
$D$. Now for all $\xi\in D$ we have
\[
\langle z+w,\xi\rangle=z_I^\top\xi_I+z_J^\top
\xi_J+\langle w,\xi \rangle =z_I^\top
\xi_I+ y_J^\top\xi_J+\langle
w,\xi\rangle\leq\langle y+w,\xi \rangle
\]
because $\xi_I\in\mathbb R_+^m$ and $z_i\leq y_i$ for all $i\in I$,
by assumption. Hence, by the monotonicity of the exponential we see
that \eqref{est: moments interior} holds with $y$ replaced by $z$.
Hence, once again by \eqref{simplfied D canonical} we have
$B_{\varepsilon}(z)\subset\cY$, that is, $z\in\cY^\circ$.
\end{pf}

We are now prepared to prove Proposition~\ref{c blow up after r} under
Assumption~\ref{Ass:complex}(i).

\begin{pf*}{Proof of Proposition~\ref{c blow up after r} under
Assumption~\ref{Ass:complex}(\normalfont{i})}
By a straightforward check, for every $u\in\cY$,
\[
\Re\bigl(R_i(u)\bigr)\leq R_i\bigl(\Re(u)\bigr),
\]
and by Lemma~\ref{qmi} we can apply the ODE comparison result of
\citet
{Volkmann1973} to the first $m$ coordinates of $\psi$, which let us
conclude that
$\Re(\psi_I(t,u))\preceq\psi_I(t,\Re(u))$ for $t<T_+(u)\wedge
T_+(\Re
(u))$. In view of Lemma~\ref{order regular D canonical} we therefore
have $T_+(u)\geq T_+(\Re(u))$, unless $|\psi(t,u)|$
explodes before\break $|\psi(t,\Re(u))|$ does. We show in the following that
this cannot happen: By Lemma~\ref{Lem:R_properties_complex} we have
\begin{eqnarray*}
\frac{\partial}{\partial t} \bigl|\psi_I(t,u)\bigr|^2&=&2\Re\bigl\langle
\overline {\psi}_I(t,u), R_I\bigl(\psi(t,u)\bigr)\bigr
\rangle
\\
&\leq &g\bigl(\Re\psi(t,u)\bigr) \bigl(1 + \bigl|\psi_J(t,u)\bigr|^2
\bigr) \bigl(1 + \bigl|\psi_I(t,u)\bigr|^2\bigr)
\end{eqnarray*}
with a function $g$ which is finite on all of $\cY$. Since
$\psi_J(t,u)\equiv\psi_J(t,u_J)$ exists globally as solution of a
linear ordinary differential equation, we obtain by Gronwall's
inequality applied to $(1 + |\psi_I(t,u)|^2)$ that
\[
\bigl|\psi_I(t,u)\bigr|\leq|u_I|^2+
\bigl(1+|u_I|^2\bigr)\int_0^t
h(s)e^{\int_0^s
h(\xi)\,d\xi}\,ds,
\]
where $h(t):=g(\Re\psi(t,u)) (1 + |\psi_J(t,u)|^2)$. Hence we
have shown $T_+(u)\geq T_+(\Re(u))$.
\end{pf*}

\subsection{Matrix state spaces}\label{Sub:matrix_cone}
Let $S_d$ be the space of symmetric real $d\times d$ matrices,
endowed with the inner product $\langle{x},{y}\rangle = \tr(xy)$,
where $\tr$
denotes the trace operator. We further denote by $\mathbb
C^{m\times n}$ the space of complex $m\times n$ matrices. We make
the latter into a normed space by introducing a norm as
$\|a\|^2:=\tr(a \bar a^\top)$. Here $^\top$ denotes matrix
transposition, and $\bar a$ is the element-wise conjugate of the
matrix $a$.

We start with the following observation, which is a generalization
of \citet{MayerhoferFinVar}, Lemma B.1.

%
\begin{lem}\label{involvemelemma}
There exists a locally Lipschitz function $h\dvtx S_d\rightarrow\mathbb
R_+$ such that for all $a\in\mathbb C^{m\times n}$ and for any $b\in S(S_n)$,
we have
%
%
\begin{equation}
\label{firstmatrixlem} \Re\tr\bigl(-b\bar a a^\top\bigr)\leq h(\Re b)\cdot\|a
\|^2.
\end{equation}
\end{lem}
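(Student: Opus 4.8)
The plan is to reduce inequality \eqref{firstmatrixlem} to an elementary trace estimate for Hermitian matrices, from which an explicit --- indeed globally Lipschitz --- choice of $h$ can be read off.

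First I would decompose $b = p + iq$ with $p = \Re b$ and $q = \Im b$, both real symmetric, and set $C := \bar a\, a^\top$. A direct check with the conjugate--transpose rules shows that $C$ is Hermitian (since $C^{*} = (a^{\top})^{*}(\bar a)^{*} = \bar a\, a^{\top} = C$) and positive semidefinite (since $v^{*}Cv = \norm{a^{\top}v}^{2} \ge 0$ for every vector $v$), and that $\tr C = \sum_{j,k}\abs{a_{jk}}^{2} = \norm{a}^{2}$. Since $p$ and $q$ are real symmetric and $C$ is Hermitian, the traces $\tr(pC)$ and $\tr(qC)$ are real: for instance $\overline{\tr(pC)} = \tr(p\bar C) = \tr(pC^{\top}) = \tr\big((Cp)^{\top}\big) = \tr(Cp) = \tr(pC)$, using $\bar p = p$ and $\bar C = C^{\top}$, and similarly for $q$. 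Hence $\Re\tr(-b\,\bar a\,a^\top) = \Re\tr(-bC) = -\tr(pC)$, so the imaginary part of $b$ drops out completely.

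It then remains to bound $-\tr(pC)$ above by a Lipschitz function of $p$ times $\tr C = \norm{a}^{2}$. Using the spectral decomposition $C = \sum_i \lambda_i v_i v_i^{*}$ with $\lambda_i \ge 0$ and $(v_i)$ orthonormal, one has $\tr(pC) = \sum_i \lambda_i\, v_i^{*} p v_i \ge \lambda_{\min}(p)\sum_i\lambda_i = \lambda_{\min}(p)\,\tr C$, so that $-\tr(pC) \le -\lambda_{\min}(p)\,\tr C \le \norm{p}\,\tr C$ with $\norm{p}$ the operator (or Frobenius) norm. Consequently \eqref{firstmatrixlem} holds with $h(b) := \norm{\Re b}$, a map $S_d \to \RR_{+}$ that is globally Lipschitz and in particular locally Lipschitz; if one wants the sharpest constant one may instead take $h(b) = \big(-\lambda_{\min}(\Re b)\big)^{+}$, which is again Lipschitz in $b$. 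I do not anticipate any genuine obstacle here: the only points requiring a little care are the conjugation/transposition bookkeeping that makes $\bar a\,a^\top$ Hermitian positive semidefinite, the reality of $\tr(pC)$, and the standard inequality $\abs{\tr(pC)} \le \norm{p}\,\tr C$ for positive semidefinite $C$ (von Neumann's trace inequality, or Cauchy--Schwarz combined with $\norm{C}_{F} \le \tr C$).
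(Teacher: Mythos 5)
Your argument is correct, and it reaches the same conclusion by a noticeably different route than the paper's. The paper expands both $a=a_1+ia_2$ and $b=b_1+ib_2$ into real and imaginary parts, checks by a direct computation that the $b_2$-contribution to $\Re\tr$ vanishes (because $\tr(b_2 Q)=0$ for $b_2$ symmetric and $Q=a_1^\top a_2-a_2^\top a_1$ skew-symmetric), and then bounds $-\tr\bigl(b_1(a_1^\top a_1+a_2^\top a_2)\bigr)$ using the metric projection $\pi:S_d\to S_d^+$ (which satisfies $\pi(z)\succeq z$) together with Cauchy--Schwarz, arriving at $h(x)=\|\pi(-x)\|$. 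You avoid any explicit real/imaginary split of $a$ by observing that $C=\bar a\,a^\top$ is Hermitian positive semidefinite with $\tr C=\|a\|^2$, deduce that $\tr(pC)$ and $\tr(qC)$ are real for $p=\Re b$, $q=\Im b$, so that only $\Re b$ survives, and finish with the eigenvalue bound $\tr(pC)\ge\lambda_{\min}(p)\,\tr C$. Both proofs thus isolate $\Re b$ as the only relevant part of $b$, but yours does so via the Hermitian structure rather than coordinate expansion, and replaces the projection-plus-Cauchy--Schwarz step by a one-line spectral estimate. Your choice $h(p)=\|p\|$ is globally Lipschitz and suffices; the sharper $h(p)=(-\lambda_{\min}(p))_+$ equals the operator norm of $\pi(-p)$ and is in fact tighter than the paper's $\|\pi(-p)\|$. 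Two small remarks: (i) as written, $\tr(-b\,\bar a\,a^\top)$ only makes dimensional sense if $m=n$; the paper's own proof uses $\bar a^\top a$ (an $n\times n$ matrix), and your computations carry over verbatim with $C'=\bar a^\top a = a^*a$, while in the lemma's sole application $a=u\in S(S_d)$ is symmetric so $\bar a\,a^\top=\bar a^\top a$ anyway; (ii) you should write $h$ as a function of $p=\Re b$, i.e.\ $h(p):=\|p\|$, to match the declared domain.
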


\begin{pf}
Recall that the projection $\pi\dvtx  S_d\rightarrow S_d^+$ is a well
defined, convex (hence locally Lipschitz continuous) map, which
satisfies $\pi(z) \succeq z$ for all $z \in S_d$.

Let us write $a=a_1+ia_2$ and $b=b_1+ib_2$ with $a_1,a_2\in\mathbb
R^{m\times n}$ and $b_1,b_2\in S_n$. Then we have
\begin{eqnarray*}
\Re\tr\bigl(-b\bar a ^\top a\bigr)&=&\Re\tr\bigl(-(b_1+ib_2)
\bigl(a_1^\top-ia_2^\top \bigr)
(a_1+ia_2)\bigr)
\\
&=&\tr\bigl(-b_1\bigl(a_1^\top a_1
\bigr)\bigr)+\tr\bigl(-b_1 \bigl(a_2^\top
a_2\bigr)\bigr)+0
\\
&\leq&\tr\bigl(\pi(-b_1) \bigl(a_1^\top
a_1\bigr)\bigr)+\tr\bigl(\pi(-b_1) \bigl(a_2^\top
a_2\bigr)\bigr)
\\
&\leq&\bigl\|\pi(-b_1)\bigr\| \bigl( \| a_1\|^2+\|
a_2\|^2\bigr)
\\
&=&\bigl\|\pi(-b_1)\bigr\| \|a\|^2.
\end{eqnarray*}
The last inequality holds in view of the Cauchy--Schwarz inequality. We
now see that inequality
\eqref{firstmatrixlem} holds by setting $h(x):=\|\pi(-x)\|$.
\end{pf}

Next we present the admissibility conditions for matrix-valued affine
processes that have been established in \citet{cfmt}. Note that in the
case $d = 1$ it holds that $S_d^+ = \mathbb{R}_{\geq0}$, that is, the
one-dimensional case is already covered by the previous section.
Therefore we may assume that $d \ge2$, which leads to several
simplifications of the parameter conditions. It has been shown in
\citet
{MayerhoferFinVar} that affine processes on $S_d^+$ ($d\geq2$) do not
exhibit jumps of infinite total variation. Compared with \citet{cfmt}
this makes the use of a truncation function
in the definition of $R$ obsolete and also simplifies the very
complicated (i.e., hard to check) necessary tradeoff between linear
jump coefficient and drift; cf.~\citet{cfmt}, 2.11. In the following,
$\preceq$ denotes the partial order on $S_d$ induced by the cone $S_d^+$.

\begin{defn}
An admissible parameter set $(\alpha, b,B, m(d\xi), \mu(d\xi))$
consists
of:
\begin{itemize}
\item a linear diffusion coefficient $\alpha\in S_d^+$,
\item a constant drift $b\in S_d^+$ satisfying
\[
b\succeq(d-1)\alpha,
\]
\item a constant jump term: a Borel measure m on $S_d^+\setminus\{0\}$
satisfying
\[
\int_{S_d^+\setminus\{0\}}\bigl(\|\xi\|\wedge1\bigr)m(d\xi)<\infty,
\]
\item a linear jump coefficient $\mu$ which is an $S_d^+$-valued,
sigma-finite measure
on $S_d^+\setminus\{0\}$ satisfying
\[
\int_{S_d^+\setminus\{0\} }\bigl(\|\xi\|\wedge1\bigr)\mu(d\xi)<\infty
\]
\item and finally, a linear drift $B$, which is a linear map from $S_d$
to $S_d$ and
``inward pointing'' at the boundary of $S_d^+$. That is,
\[
\tr\bigl(x B(u)\bigr)\geq0\qquad \mbox{for all } u,x \in S_d^+\mbox{
with }\tr(ux )=0.
\]
\end{itemize}
\end{defn}

\begin{rem}\label{rem assumption matrix}
Using the notation $a(x)$ from \eqref{Eq:characteristics_affine} we
have $a(x)(u)=2\tr(x u\alpha u)$, and the following are equivalent:
\begin{longlist}[(1)]
\item[(1)] 
condition \ref{Ass:complex}(ii);
\item[(2)] 
$\alpha=0$ or $\alpha$ is invertible;
\item[(3)] 
either $a(x)$ vanishes for all $x\in S_d^+$,
or it is nondegenerate for any $x\in S_d^{+}\setminus\{0\}$.
\end{longlist}
The only nontrivial direction to prove is (1) $\Rightarrow
$ (2). Assume, for a contradiction, that $\alpha\neq0$,
but $\alpha$ is degenerate. Then there exists
$u\in S_d^+\setminus\{0\}$ such that $u\alpha=\alpha u=0$. But then
$a(x)(u)=\tr( x u\alpha u)=0$, for any $x$.
\end{rem}

Note that \citet{cfmt} uses the Laplace transform to define the affine
property, which introduces several changes of signs compared with our
definition. To comply with the notation of \citet{cfmt}, we introduce
\[
\wh F(y) = - F(-y), \qquad \wh R(y) = -R(-y),
\]
which can now be written as
\begin{eqnarray*}
\wh F(y)&=&\tr(b y)-\int_{S_d^+\setminus\{0\}}\bigl(e^{-\tr(y\xi
)}-1
\bigr)m(d\xi),
\\
\wh R(y)&=&-2y\alpha y+B^\top(y)-\int_{S_d^+\setminus\{0\}}
\bigl(e^{-\tr
(y\xi)}-1\bigr)\mu(d\xi).
\end{eqnarray*}
Writing furthermore
\[
\wh p(t,y) = -p(t,-y),\qquad \wh q(t,y) = -q(t,-y),
\]
and similarly for $\phi$ and $\psi$, then, by \citet{cfmt},
\[
\mathbb{E}^{x} \bigl[e^{-\tr(yX_t)}\bigr]=e^{- \wh p(t,y)-\tr( \wh q(t,y)x)}
\]
for all $t\geq0$, $y,x\in S_d^+$, and by \citet{MayerhoferFinVar} the
exponents $(\wh p,\wh q)\dvtx \mathbb R_+\times S_d^+\rightarrow\mathbb
R_+\times S_d^+$ solve the system of generalized Riccati equations
\begin{subequations}
%
\begin{eqnarray}
\label{eq phi} \pd{} {t} \wh p(t,y)&=& \wh F\bigl(\wh q(t,y)\bigr),
\\
\label{eq psi} \pd{} {t} \wh q(t,y)&=&\wh R\bigl(\wh q(t,y)\bigr),
\end{eqnarray}
\end{subequations}
given initial data $\wh p(0,y)=0$, $\wh q(0,y)=y$.

Since $\mu$ is an $S_d^+$-valued measure, $\tr(\mu)$ is a
well-defined, nonnegative measure, naturally given by
\[
\tr(\mu) (A)=\tr\bigl(\mu(A)\bigr).
\]
Accordingly, the domain $\wh\cY:= - \cY$ is given by
%
\begin{equation}
\label{simplfied D matrix} \wh\cY=\biggl\{y\in S_d \Bigm|\int_{\|\xi\|\geq
1}e^{-\tr(y\xi)}
\bigl(m(d\xi)+\tr(\mu) (d\xi)\bigr)<\infty\biggr\}.
\end{equation}
The inclusion $\supseteq$ holds in view of the positive definiteness of
the measure $\mu$, while the inclusion $\subseteq$
follows from \citet{MayerhoferFinVar}, Lemma~3.3.
Similarly to the preceding section, we start with the following crucial
estimate: $I$ denotes the $d\times d$ unit matrix.

\begin{lem}\label{Lem:R_properties_complex_matrix}
Suppose that the diffusion coefficient satisfies $\alpha= I$ or
$\alpha=0$. Then there exists a locally Lipschitz continuous
function $g$ on $\wh\cY^\circ$ such that for all $u\in S(\wh\cY^\circ
)$ we have
%
\begin{equation}
\label{Item:Rnorm_matrix} \Re\bigl(\tr\bigl(\bar u \wh R(u) \bigr)\bigr)\leq g\bigl(\Re(u)
\bigr) \bigl(1+\|u\|^2\bigr).
\end{equation}
\end{lem}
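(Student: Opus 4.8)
The plan is to split $\wh R$ into its diffusion, linear-drift and jump parts,
\[
\wh R(u)=-2u\alpha u+B^\top(u)-\int_{S_d^+\setminus\{0\}}\left(e^{-\tr(u\xi)}-1\right)\mu(d\xi),
\]
and to bound $\Re\tr(\bar u\, v)$ for each of these three summands $v$ separately; since $v\mapsto\tr(\bar u\,v)$ is linear and $\Re$ is subadditive, adding the bounds will yield \eqref{Item:Rnorm_matrix}. Throughout I would use $\norm{u}^2=\tr(\bar u u)$, the identity $\Re\tr(u\xi)=\tr(\Re u\,\xi)$, the Cauchy--Schwarz bound $\abs{\tr(vw)}\le\norm{v}\norm{w}$ for complex matrices, and the polar representation $\mu(d\xi)=M(\xi)\,\tr(\mu)(d\xi)$ with $M(\xi)\in S_d^+$ and $\tr M(\xi)=1$ --- equivalently, the fact that the complex scalar measure $\tr(\bar u\,\mu(\cdot))$ has total variation at most $\norm{u}\,\tr(\mu)(\cdot)$.

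The linear-drift term is immediate: $B^\top$ is a bounded linear operator on the finite-dimensional space $S_d$, extended $\CC$-linearly to $S(S_d)$, hence $\Re\tr(\bar u\,B^\top(u))\le\norm{B^\top}_{\mathrm{op}}\norm{u}^2$, a constant times $1+\norm{u}^2$. For the diffusion term I would split into the two cases permitted by the hypothesis. If $\alpha=0$ the term vanishes. If $\alpha=I$, then $-2u\alpha u=-2u^2$, and using cyclicity of the trace together with $u^\top=u$ one checks that $\tr(\bar u(-2u^2))=\tr(-b\,\bar a^\top a)$ with $b=2u\in S(S_d)$ and $a=\bar u$, so Lemma~\ref{involvemelemma} applies and gives $\Re\tr(\bar u(-2u^2))\le h(2\Re u)\norm{u}^2$ with $h$ the non-negative, locally Lipschitz function supplied there. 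This is the only step that uses $\alpha\in\{0,I\}$.

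The jump term is where the real work lies, and I would split the $\mu$-integral at $\norm{\xi}=1$. For the large jumps the crude bound $\abs{e^{-\tr(u\xi)}-1}\le e^{-\tr(\Re u\,\xi)}+1$ together with the variation bound above gives a contribution of modulus at most $\norm{u}\,g_1(\Re u)$, where $g_1(y):=\int_{\norm{\xi}\ge1}(e^{-\tr(y\xi)}+1)\,\tr(\mu)(d\xi)$; this is finite on $\wh\cY$ by \eqref{simplfied D matrix} and by the finite-variation condition $\int(\norm{\xi}\wedge1)\mu(d\xi)<\infty$, and, being convex and finite, it is locally Lipschitz on the open set $\wh\cY^\circ$. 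For the small jumps I would \emph{not} estimate $e^{-\tr(u\xi)}-1$ directly but expand to first order,
\[
e^{-\tr(u\xi)}-1=-\tr(u\xi)\int_0^1 e^{-s\tr(u\xi)}\,ds,
\]
so that after pairing with $\bar u$ and integrating, the small-jump contribution is bounded in modulus by $\norm{u}^2\,e^{\norm{\Re u}}\int_{\norm{\xi}\le1}\norm{\xi}\,\tr(\mu)(d\xi)=:\norm{u}^2\,g_2(\Re u)$; the crucial observation is that $\abs{e^{-s\tr(u\xi)}}=e^{-s\tr(\Re u\,\xi)}\le e^{\norm{\Re u}}$ whenever $\norm{\xi}\le1$ and $s\le1$, so the surviving exponential is controlled by $\Re u$ alone, while $g_2$ is finite (again by the finite-variation condition) and locally Lipschitz on all of $S_d$.

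Finally I would set $g(\Re u):=h(2\Re u)+\norm{B^\top}_{\mathrm{op}}+g_1(\Re u)+g_2(\Re u)$, which is non-negative and locally Lipschitz on $\wh\cY^\circ$, and absorb the stray first power using $\norm{u}\le1+\norm{u}^2$ to arrive at \eqref{Item:Rnorm_matrix}. The one genuinely delicate point is the small-jump estimate: the first-order Taylor expansion must be used precisely so that the exponential factor $e^{-s\tr(u\xi)}$ that remains has modulus governed by $\norm{\Re u}$ rather than $\norm{u}$, since a naive bound on $\abs{e^{-\tr(u\xi)}-1}$ would grow exponentially in $\norm{u}$ and destroy the quadratic growth required by \eqref{Item:Rnorm_matrix}. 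Compared with the canonical-state-space estimate of Lemma~\ref{Lem:R_properties_complex} the present argument is considerably shorter, because $\wh R$ carries no truncation function here (jumps on $S_d^+$, $d\ge2$, have finite total variation) and $S_d^+$ admits no block decomposition, so a single first-order expansion suffices.
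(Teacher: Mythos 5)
Your proof is correct and follows essentially the same route as the paper: the same four-way split into drift, diffusion, large-jump and small-jump contributions, the same appeal to Lemma~\ref{involvemelemma} (with exactly the matrix choices $b=2u$, $a=\bar u$) for the case $\alpha=I$, and the same first-order Taylor expansion of $e^{-\tr(u\xi)}-1$ so that the surviving exponential factor is controlled by $\|\Re u\|$ on the small-jump region. The only cosmetic difference is that you state the total-variation bound $\abs{\tr(\bar u\,\mu(\cdot))}\le\norm{u}\tr(\mu)(\cdot)$ explicitly, whereas the paper delegates this to a cited lemma from \cite{MayerhoferFinVar}.
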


\begin{pf}
As in the proof of Lemma~\ref{Lem:R_properties_complex} we start with
drift and big-jump parts. Clearly we have
%
\begin{equation}
\label{contributione1} \Re\tr\bigl(\bar uB^\top(u)\bigr)\leq G_1
\bigl(1+\|u\|\bigr)^2
\end{equation}
for some positive constant $G_1$. What concerns the big-jump parts, we
have
%
\begin{eqnarray}
\nonumber
\Re\tr\biggl(\bar u \biggl(\int_{\|\xi\|>1}
\bigl(e^{-\tr(u\xi)}-1 \bigr)\mu(d\xi) \biggr) \biggr) &\leq&\|u\|\tr(\mu)
\bigl(\bigl\{\xi\dvtx  \|\xi\|>1\bigr\}\bigr)
\\
\label{contributione2} &&{}+ \|u\| \int_{\|\xi\|>1}\bigl(e^{-\tr(\Re u\xi)}
\bigr)\tr(\mu) (d\xi)
\\
&\leq& g_2\bigl(\Re(u)\bigr) \bigl(1+\|u\|^2
\bigr)\label{contributione2b},
\end{eqnarray}
for some locally Lipschitz continuous function $g_2$. The integral
\eqref{contributione2} is finite, because $\Re(u)\in\wh\cY$ by
assumption. Here we have also used \citet{MayerhoferFinVar}, Lemma~3.3.
Note that we can set
\[
g_2(y):=\tr(\mu) \bigl(\bigl\{\xi\dvtx  \|\xi\|>1\bigr\}\bigr) + \int
_{\|\xi\|>1}\bigl(e^{-\tr
(y\xi)}\bigr)\tr(\mu) (d\xi).
\]

For $\alpha=0$ we set $g_3=0$. If $\alpha=I$, we involve Lemma~\ref
{involvemelemma} and obtain
%
\begin{equation}
\label{contributione3} \Re\bigl(\bar u u^2\bigr)\leq g_3\bigl(
\Re(u)\bigr)\|u\|^2,
\end{equation}
where $g_3(\cdot)=h(\cdot)=\pi(-\cdot)$.

It remains to estimate the small-jump part. Using again
\citet{MayerhoferFinVar}, Lemma~3.3, we have
\begin{eqnarray*}
&&\Re\tr\biggl(\bar u \int_{0<\|\xi\|\leq1}\bigl(e^{-\tr(u\xi)}-1
\bigr)\mu(d\xi)\biggr)
\\
&&\qquad=\Re\tr\biggl(\bar u \int_{0<\|\xi\|\leq1}\int_0^1
\tr(u\xi)e^{-s\tr
(u\xi)}\,ds\mu(d\xi)\biggr)
\\
&&\qquad\leq\|u\|^2 \int_{0<\|\xi\|\leq1}\int_0^1
e^{-s\tr(u\xi)} \|\xi\|\tr (\mu) (d\xi)
\\
&&\qquad\leq e^{\|\Re u\|}\|u\|^2 \int_{0<\|\xi\|\leq1}\|\xi\|
\tr(\mu) (d\xi)
\\
&&\qquad\leq g_4\bigl(\Re(u)\bigr) \bigl(1+\|u\|^2\bigr)
\end{eqnarray*}
with
\[
g_4(y):= e^{\|y\|} \int_{0<\|\xi\|\leq1}\|\xi\|
\tr(\mu) (d\xi).
\]
Summarizing the last estimate together with \eqref{contributione1},
\eqref{contributione2b} and \eqref{contributione3}
and setting
\[
g(y):=G_1+g_2(y)+g_3(y)+g_4(y)
\]
proves the assertion.
\end{pf}

We provide two further lemmas:

\begin{lem}\label{order regular D matrix}
If $y\in\wh\cY^\circ$, and $z\in S_d$ such that $z\succeq y$, then we
also have $z\in\wh\cY^\circ$.
\end{lem}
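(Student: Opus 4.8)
The plan is to follow the pattern of the proof of Lemma~\ref{order regular D canonical}, using the explicit description \eqref{simplfied D matrix} of $\wh{\cY}$ together with the fact that the measures $m$ and $\mu$ are carried by the cone $S_d^+$. First I would use $y \in \wh{\cY}^\circ$ to fix $\varepsilon > 0$ with $B_{<\varepsilon}(y) := \set{w \in S_d : \|y - w\| < \varepsilon} \subset \wh{\cY}$; by \eqref{simplfied D matrix} this says
\[
\int_{\|\xi\| \ge 1} e^{-\tr((y+w)\xi)}\left(m(d\xi) + \tr(\mu)(d\xi)\right) < \infty \qquad \text{for all } \|w\| < \varepsilon .
\]

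Next I would compare $z+w$ with $y+w$ on the supports of the jump measures. For every $\xi \in S_d^+$ one has $\tr((z-y)\xi) \ge 0$, since $z-y \in S_d^+$ and $\tr(x\xi) \ge 0$ whenever $x,\xi \in S_d^+$; hence $-\tr((z+w)\xi) \le -\tr((y+w)\xi)$ and, by monotonicity of the exponential, $e^{-\tr((z+w)\xi)} \le e^{-\tr((y+w)\xi)}$. Because $m$ and $\mu$ (and hence $\tr(\mu)$) are supported on $S_d^+ \setminus \set{0}$, integrating this pointwise bound over $\set{\|\xi\| \ge 1}$ shows that the displayed integral stays finite with $y$ replaced by $z$, for every $\|w\| < \varepsilon$. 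Invoking \eqref{simplfied D matrix} once more, this means $B_{<\varepsilon}(z) \subset \wh{\cY}$, i.e.\ $z \in \wh{\cY}^\circ$, as claimed.

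I do not expect a genuine obstacle here. The only point requiring attention is that the comparison of the two exponentials is needed only for $\xi \in S_d^+$ rather than for all $\xi \in S_d$, which is exactly where the support property of $m$ and $\mu$ enters; the inequality $\tr(x\xi) \ge 0$ for $x, \xi \in S_d^+$ is standard (e.g.\ $\tr(x\xi) = \tr(x^{1/2}\xi x^{1/2}) \ge 0$). In particular, no analogue of the auxiliary condition ``$z_J = y_J$'' from Lemma~\ref{order regular D canonical} is needed, since here the full order relation $z \succeq y$ already yields the inequality on all of $S_d^+$.
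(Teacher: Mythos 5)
Your proof is correct and follows essentially the same route as the paper's: fix $\varepsilon$ via the description \eqref{simplfied D matrix} of $\wh{\cY}$, observe $\tr((z-y)\xi)\geq 0$ for $\xi\in S_d^+$ because $z-y\in S_d^+$, and use that $m$ and $\tr(\mu)$ are supported on $S_d^+$ to dominate the integral for $z+w$ by that for $y+w$. The only cosmetic difference is that the paper centers the perturbation ball at $0$ and writes $y+w$, whereas you center it at $y$; the content is identical.
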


\begin{pf}
Using \eqref{simplfied D matrix} we infer the existence of some
$\varepsilon>0$ such that
for all $w\in B_{\varepsilon}(0)=\{w\in S_d\mid\|w\|<\varepsilon\}$,
we have
\[
\int_{\|\xi\|\geq1}e^{-\tr((y+w)\xi)}\bigl(m(d\xi)+\tr(\mu) (d\xi)
\bigr)<\infty.
\]
The assumption of the lemma implies that $\langle z,\xi\rangle\geq
\langle y,\xi\rangle$
for all $\xi\in S_d^+$. Furthermore, $m$ and $\tr(\mu)$ are supported
on $S_d^+$. Therefore we have
\begin{eqnarray*}
&&\int_{\|\xi\|\geq1}e^{-\tr((z+w)\xi)}\bigl(m(d\xi)+\tr(\mu) (d\xi)
\bigr)
\\
&&\qquad\leq\int_{\|\xi\|\geq1}e^{-\tr((y+w)\xi)}\bigl(m(d\xi)+\tr(\mu) (d\xi
)\bigr)<\infty
\end{eqnarray*}
for all $w\in B_{\varepsilon}(0)$, which in view of
\eqref{simplfied D matrix} proves that $z\in\wh\cY^\circ$.
\end{pf}

\begin{lem}\label{qmi R matrix}
$\wh R$ is quasimonotone increasing (with respect to $S_d^+$) on
$\wh\cY^\circ$.
\end{lem}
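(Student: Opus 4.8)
The plan is to unwind Definition~\ref{Def:quasi_monotone} for the cone $K = S_d^+$ and then verify the resulting inequality term by term in the splitting $\wh R(y) = -2y\alpha y + B^\top(y) - \int_{S_d^+\setminus\{0\}}(e^{-\tr(y\xi)}-1)\,\mu(d\xi)$. So fix $y,z \in \wh\cY^\circ$ with $y \preceq z$ and $\tr(yx) = \tr(zx)$ for some $x \in S_d^+$, and set $p := z-y \in S_d^+$, so that $\tr(px) = 0$. The first step, and the only genuinely algebraic ingredient, is to record that two positive semidefinite matrices $p,x$ with $\tr(px) = 0$ must satisfy $px = xp = 0$: diagonalising $p = \sum_i \lambda_i v_i v_i^\top$ with $\lambda_i > 0$, the identity $0 = \tr(px) = \sum_i \lambda_i\, v_i^\top x v_i$ forces $x^{1/2} v_i = 0$ for every $i$, hence $xp = 0$, and $px = 0$ by transposition. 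Equivalently $xz = xy$ and $zx = yx$, and this is what gets used below.

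Next I would show that each of the three pieces of $\wh R$ contributes a nonnegative amount to $\tr(\wh R(z)x) - \tr(\wh R(y)x)$. For the diffusion term, $xz = xy$ and $zx = yx$ give $\tr(z\alpha z\,x) = \tr(\alpha\, zxz) = \tr(\alpha\, zxy) = \tr(\alpha\, yxy) = \tr(y\alpha y\,x)$, so this term in fact contributes exactly $0$; in particular no nondegeneracy assumption on $\alpha$ is needed here. For the linear drift, adjointness of $B^\top$ with respect to the trace form yields $\tr(B^\top(w)\,x) = \tr(w\,B(x))$, so the contribution is $\tr(p\,B(x))$, which is nonnegative by the ``inward pointing'' admissibility condition applied to the pair $(p,x) \in S_d^+ \times S_d^+$ -- legitimate exactly because $\tr(px) = 0$.

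It remains to treat the jump part. Here $A \mapsto \tr(\mu(A)x)$ is a nonnegative measure, dominated by $\norm{x}\,\tr(\mu)$ because $\mu$ is $S_d^+$-valued and $x \succeq 0$, so that $\xi \mapsto e^{-\tr(y\xi)} - 1$ and $\xi \mapsto e^{-\tr(z\xi)} - 1$ are integrable against it thanks to $y,z \in \wh\cY^\circ$ and the behaviour of $\mu$ near the origin (cf.~\eqref{simplfied D matrix} and Lemma~\ref{order regular D matrix}). The contribution of the jump part is then $\int_{S_d^+\setminus\{0\}}\bigl(e^{-\tr(y\xi)} - e^{-\tr(z\xi)}\bigr)\,\tr(\mu(d\xi)\,x)$, and since $\xi \in S_d^+$ and $p \succeq 0$ force $\tr(z\xi) \geq \tr(y\xi)$, hence $e^{-\tr(z\xi)} \leq e^{-\tr(y\xi)}$, the integrand is nonnegative. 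Summing the three contributions gives $\tr(\wh R(z)x) \geq \tr(\wh R(y)x)$, which is quasimonotonicity. The only mildly delicate point is making the diffusion term cancel exactly rather than merely bounding it; this rests entirely on the algebraic lemma of the first step, after which the drift and jump terms follow directly from the admissibility conditions and the positivity of $\mu$.
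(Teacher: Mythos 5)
Your proof is correct, and it fills in details that the paper itself omits: the paper simply refers to \citet[Lemma 5.1]{cfmt} and says the argument is analogous, whereas you give the full self-contained verification. The ingredients you use -- the algebraic fact that $p, x \succeq 0$ with $\tr(px) = 0$ forces $px = xp = 0$, the exact cancellation of the quadratic term via cyclicity, the ``inward pointing'' condition for the linear drift (applied with $u = x$ and the other argument equal to $p$, which is legitimate precisely because $\tr(px)=0$), and monotonicity of $\xi \mapsto e^{-\tr(\cdot\,\xi)}$ together with $\tr(\mu(\cdot)x)$ being a nonnegative measure for $x \succeq 0$ -- are exactly the ones the referenced proof rests on, so this is the same approach rather than a new one. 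One small remark worth keeping: your observation that the diffusion term contributes exactly $0$, so no nondegeneracy of $\alpha$ is needed, is correct and clarifies why Assumption~\ref{Ass:complex}\eqref{assumption matrix} plays no role in this particular lemma.
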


\begin{pf}
The proof is analogous to the one of \citet{cfmt}, Lemma~5.1, which
states quasimonotonicity of $\wh R$ on $S_d^+$.
\end{pf}

We are now prepared to prove Proposition~\ref{c blow up after r} for
$D=S_d^+$, $d\geq2$:

\begin{pf}
According to \citet{cfmt}, Theorem~4.14, for any affine process $X$
(with
diffusion coefficient $\alpha$) there exists a linear automorphism $g$
of $S_d^+$ such that the affine process $Y=g(X)$
has diffusion coefficient $\widetilde{\alpha}=\diag(I_r,0)$, where
$I_r$ is the $r\times r$ unit matrix, and $r=\rank(\alpha)$. According
to our assumption $r=0$ or $r=d$ (see Remark~\ref{rem assumption
matrix}), and linear transformations do not
affect the blow-up relation (between the real and complex-valued
solutions) we are about to prove here. Hence we may without loss of
generality assume that
$\alpha=0$ or $\alpha=I$.

For any $u\in S(\wh\cY^\circ)$ we write $y=\Re(u)$. The
quasimonotonicity of $\wh R$ (Lem\-ma~\ref{qmi R matrix}) allows us to
apply
the multivariate comparison result by \citet{Volkmann1973}, and we
conclude that for $t<T_+(u)\wedge T_+(y)$, we have
$\Re\wh\psi(t,u)\succeq\wh q(t,y)$. In view of Lemma~\ref{order
regular D
matrix} we only need to show that $t\mapsto\|\wh\psi(t,u)\|$ does not
explode before $t\mapsto\|\wh q(t,y)\|$. By Lemma~\ref{Lem:R_properties_complex_matrix}, there exists a continuous
function $g$ such that
\[
\Re\tr\bigl(\bar u \wh R(u)\bigr)\leq g\bigl(\Re(u)\bigr) \bigl(1+\|u
\|^2\bigr), \qquad u\in S\bigl(\wh\cY^\circ\bigr).
\]
Hence, we have for all $t<T_+(u)\wedge T_+(y)$,
\[
\pd{} {t} \bigl(\bigl\|\wh\psi(t,u)\bigr\|^2\bigr)=2\Re\tr\bigl(\overline{\wh
\psi(t,u)}\wh R\bigl(\wh\psi(t,u)\bigr)\bigr)\leq g\bigl(\Re\bigl(\wh\psi(t,u)
\bigr)\bigr) \bigl(1+\bigl\|\wh\psi(t,u)\bigr\|^2\bigr),
\]
and by Gronwall's inequality, we obtain
\[
\bigl\|\wh\psi(t,u)\bigr\|\leq\bigl(1+\|u\|^2\bigr) \int_0^t
g(s)e^{\int_0^s
g(\xi)\,d\xi}\,ds.
\]
Hence we have shown that $T_+(u)\geq T_+(y)$.
\end{pf}

\subsection{Proof of Theorem \texorpdfstring{\protect\ref{thmm:main_complex}}{2.26}}
The first part of Theorem~\ref{thmm:main_complex} is proved in
Proposition~\ref{c blow up after r}. For the proof of the second
part, the validity of the complex transform formula \eqref{eq:
complex transform formula}, we utilize the concept of analytic
continuation.

\begin{pf*}{Proof of Theorem~\ref{thmm:main_complex}}
Consider the set
\[
U:=\bigl\{y\in\cY^{\circ}\mid T_+(y)>T\bigr\}.
\]
By assumption $U$ is nonempty, and from the standard existence and
uniqueness theorem for
ODEs it follows that $U$ is open. Next, we show that $U$ is convex. For
$y_1, y_2 \in U$ it follows from Theorem~\ref{thmm:main_results}(b) on
real moments that $\mathbb{E}^{x}[e^{\langle{y_1},{X_T}\rangle}] <
\infty$ and $\mathbb{E}^{x}[e^{\langle{y_2},{X_T}\rangle}] < \infty
$ for all $x \in D$. Let $\lambda\in
[0,1]$ and set $y_\lambda= \lambda q_1 + (1 - \lambda) q_2$. By H\"
older's inequality
\[
\mathbb{E}^{x}\bigl[e^{\langle{y_\lambda},{X_T}\rangle}\bigr] \le\mathbb
{E}^{x}\bigl[e^{\langle{y_1},{X_T}\rangle}\bigr]^\lambda\cdot\mathbb
{E}^{x}\bigl[e^{\langle{y_2},{X_T}\rangle}\bigr]^{(1 - \lambda)} < \infty
\]
for all $x \in D$ and we conclude, using Theorem~\ref
{thmm:main_results}(a) that $y_\lambda\in U$ and hence that $U$ is convex.
Now set $U':=S(U)\subset\mathbb C^d$. From the properties of $U$ we
conclude that $U'$ is nonempty, open and connected. By
Proposition~\ref
{c blow up
after r} we have $T_+(u')>T_+(\Re u')$ for all $u'\in U'$. Furthermore, since
$u\mapsto R(u)$ and $u\mapsto F(u)$ are complex analytic on
$\cY^\circ$, we have by \citet{dieu69}, Theorem~10.8.2, that the
function
\[
M(u):= e^{\phi(t,u) + \langle{\psi(t,u)},{x}\rangle}
\]
is complex analytic on $U'$ for all $t \le T$. Furthermore, by
Theorem~\ref{thmm:main_results} and by Remark~\ref{rem comparison},
we have
that
\[
M(y)=e^{\phi(t,y) + \langle{\psi(t,y)},{x}\rangle}=\mathbb
E^x\bigl[e^{\langle{y},{X_t}\rangle}\bigr],\qquad y\in
U, t \le T.
\]
We conclude that the function $\Phi(u)\dvtx U'\rightarrow\mathbb C\dvtx u\mapsto\mathbb E^x[e^{\langle{u},{X_t}\rangle}]$ is an analytic
function, which
coincides with $M(u)$ on the nonempty open subset $U \subset U'$.
Hence by
the principle of analytic continuation\footnote{Here we use that $U'$
is open and connected.} [cf.~\citet{dieu69}, (9.4.4)] $\mathbb
E^x[e^{\langle{u},{X_t}\rangle}]=M(u)$ on all of
$U'$, and the proof is complete.
\end{pf*}

\section*{Acknowledgments}
We thank Chulmin Kang for valuable comments on Section~\ref{sec4.1}.

%


\printaddresses

\end{document}